\tikzset{
  % style to add an arrow in the middle of a path
  mid arrow/.style={postaction={decorate,decoration={
        markings,
        mark=at position .5 with {\arrow[#1]{stealth}}
      }}},
}
\tikzset
 {every pin/.style = {pin edge = {<-}}, 
  > = stealth, 
  flow/.style = 
   {decoration = {markings, mark=at position #1 with {\arrow{>}}},
    postaction = {decorate}
   },
  flow/.default = 0.5,   
  main/.style = {line width=1pt}
 }
\newcommand*\dif{\mathop{}\!\mathrm{d}}  % simbolo differenziale
\newcommand{\floor}[1]{\left\lfloor #1 \right\rfloor}  % parte intera
\newtheorem{theorem}{Theorem}[section] %[theorem]
\newtheorem{corollary} {Corollary}[section] %[theorem]
\newtheorem{proposition}{Proposition}[section] %[theorem]
\newtheorem{lemma}[theorem]{Lemma}
\theoremstyle{definition} % plain
\newtheorem{definition}{Definition}[section] 
\newtheorem{example}{Example}[section]
\newtheorem{remark}{Remark}[section] %[theorem]
\numberwithin{equation}{section} % equazioni con numero interno alla sezione
\providecommand{\keywords}[1]
{
  \small	
  \textbf{\textit{Keywords: }} #1
}
\providecommand{\MSC}[1]
{
  \small	
  \textit{2020 MSC: } #1   
}
\title{Point processes of the Poisson-Skellam family}
\author{Fabrizio Cinque$^1$ and Enzo Orsingher$^2$\\
        \small Department of Statistical Sciences, Sapienza University of Rome, Italy \\
        \small $^1$fabrizio.cinque@uniroma1.it $^2$enzo.orsingher@uniroma1.it
}
\begin{document}

\maketitle

\begin{abstract}
%% Text of abstract
We study a general non-homogeneous Skellam-type process with jumps of arbitrary fixed sizes. We express this process in terms of a linear combination of Poisson processes and study several properties, including the summation of independent processes of the same family, some possible decompositions (which present particularly interesting characteristics) and the limit behaviors. A compound Poisson representation and a discrete approximation are also presented. Then, we study the fractional integral of the process as well as the iterated integral of the running average. Finally, we consider some time-changed versions related to L\'{e}vy subordinators, connected to the Bernstein functions, and to the inverses of stable subordinators.
\end{abstract} \hspace{10pt}

\keywords{Compound Poisson process; Skellam distribution; Fractional integral and derivatives; Bernstein functions; Subordinators and inverses; Weak convergence}

\MSC{Primary 60G55, 60G22; Secondary 60G51}

%33C10 Bessel and Airy functions, cylinder functions
% 34A05 Explicit solutions, first integrals of ordinary differential equations
%35K25 Higher-order parabolic equations
% 60G55 Point processes
% 60G22 fractional processes, including Brownian motion
% 60G51 processes with independent increments< Levy processes

% ------  CORPO  ---------------------------------------------------------------------------------------

\section{Introduction}

The Skellam process was introduced in the short paper \cite{S1946} of 1946 as the difference of two independent Poisson processes $N_1,N_2$ with constant rates $\lambda_1,\lambda_2$ respectively. It was shown that $S = N_1-N_2$ has distribution 
\begin{equation}\label{introduzioneLeggeSkellam}
P\{S(t) = n\} = e^{-(\lambda_1+\lambda_2)t} \Biggl(\sqrt{\frac{\lambda_1}{\lambda_2}}\Bigg)^{n} I_{n}\big(2\sqrt{\lambda_1\lambda_2t}\big),\ \ \ n\in\mathbb{Z},\ t\ge0,
\end{equation}
where $I_\nu(z) = \sum_{k=0}^\infty (z/2)^{2k+\nu}/ (k! \Gamma(k+\nu + 1))$, with $\nu\in \mathbb{R}$ and $z\in\mathbb{C}$ is the modified Bessel function of order $\nu$. This process performs isolated unitary jumps of size $1$ or $-1$, extending its support to negative values as well. However, being a difference of Poisson processes it preserves several good properties (for instance stationarity and independence of the increments), allowing for detailed studies of its dynamics.

The Skellam process was subsequently extended by different authors and several new results have been recently obtained in a series of interesting papers considering a Skellam process of order $K$, that is a motion which performs jumps of size in the set $\{-K,\dots,-1,1,\dots,K\},\ K\in\mathbb{N}$, see for instance \cite{GKL2020, KK2024} and the references therein. The interest in this process and its extensions is due not only for its good stochastic properties, but also thanks to the wide spectrum of its possible applications, ranging from insurance applications to modeling the intensity difference of pixels in cameras \cite{KN2008} or the difference of the number of goals of two competing teams in a football game \cite{HJK2007} or the evolution of interacting populations \cite{CO2026}.

Another interesting extension of the Poisson process is the so-called Poisson process of order $K$, also known as general counting process, which permits us to describe arrivals up to $K$ units per instant. As far as we know this process has been introduced in 1984 in the short paper \cite{P1984} and it consists of a specific linear combination of independent homogeneous Poisson processes, i.e. $N^K = \sum_{i=1}^K i N_i$, which is strictly connected to the Skellam process of order $K,\ S^K$, since $S^K = N_1^K - N_2^K$, with $N_1^K,N_2^K$ being two independent Poisson processes of order $K$.

After the pioneering work of Laskin \cite{L2003} concerning the fractional Poisson process, several researchers studied different types of fractional point processes (see for instance \cite{BM2014, GKMR2014, PKS2011} and references therein), including non-homogeneous versions \cite{MV2019} and space-fractional versions, often related to a time-changing via Bernstein subordinators \cite{GOS2017, OT2015}, as well as state-dependent processes \cite{C2022, GOP2015, KV2019}. The quite common denomination of "time" and "space" fractionality derived from the modification of the time and of the space operator in the difference-differential equation governing the probability mass function of the processes.

Similarly, some fractional extensions of the Poisson and Skellam processes of order $K$ were introduced via time-changed Poisson processes, by using the inverse of a stable subordinator in case of a time fractionality and Bernstein subordinators in the case of the so-called space fractionality, see \cite{BS2024, DcMM2016, GKL2020, KK2024, KV2019, SMU2020}.
\\

In the present paper we consider a non-homogeneous generalized Skellam(-Poisson) process which can be useful to model situations in which the average rates of arrivals may vary with time. This generalization admits as particular cases the stochastic processes described in the previous works. In particular, we assume that $\mathcal{I}$ is a (finite) subset of $\mathbb{R}\setminus \{0\}$ and $N_i$ are independent non-homogeneous Poisson processes with rate functions $\lambda_i$ such that $\Lambda_i(t) = \int_0^t\lambda_i(s)\dif s<\infty$ for $t\ge0,\ i\in\mathcal{I}, $ and we study the linear combination $S = \sum_{i\in\mathcal{I}} iN_i$, its fractional integral and some fractional versions of the process, both in time and space sense.

In Section \ref{sezioneFamigliaSkellam} we begin by studying the generating probability function of the non-homogeneous generalized Skellam process and describing its behavior in a time interval of infinitesimal size. Then, we show the interesting form of the expected position at time $t\ge0$, the variance and the third central moment (asymmetry index), respectively given by $\sum_{i\in\mathcal{I}}i^n \Lambda_i(t) $ for $n=1,2,3$; this behavior seems not to hold for higher order moments, see Remark \ref{remarkMomentiSkellamGeneralizzato}.
We study the scaling and the summation (superposition) of generalized Skellam processes, showing that this family of stochastic processes is closed with respect to linear combinations. 

In Section \ref{sezioneDecomposizione} we discuss some thinning methods. We present the classic Bernoulli-type decomposition, yielding to independent Skellam processes with scaled rate functions, and a more general one where the resulting components are dependent Skellam processes with the same jump sizes, different from the original one.

Sections \ref{tempiPrimoPassaggioSkellamGeneralizzato} and \ref{sezioneLimiti} are respectively devoted to the first passage times (in the case of non-decreasing versions of the process) and the limit results resembling the law of large numbers and the central limit theorem. Under some conditions we also prove that the generalized Skellam process converges weakly to a Gaussian process. 
Finally, in Section \ref{sottosezioneCasoOmogeneo} we provide more details on the homogeneous case, giving a compound Poisson representation and a discrete approximation generalizing the binomial one for the homogeneous Poisson process. 

Section \ref{sezioneIntegraleSkellam} is devoted to the study of the Dzerbashyan-Caputo fractional integral of the non-homogeneous generalized Skellam process. Further results are derived for the integral of the homogeneous process, providing a compound Poisson representation and also a result concerning the iterated running average. 

Finally, Section \ref{sezioneSkellamFrazionario} concerns the fractional versions of the Skellam process. In particular, for the homogeneous case we prove a compound Poisson approximation (which turns into an exact representation for subordinators with integrable L\'{e}vy measure) and that both space and time fractionality induce a time-change of the stochastic process, respectively with a Bernstein subordinator and with the inverse of a stable subordinator (see Theorem \ref{teoremaSkellamBernsteinCaputo}). On the other hand, in the non-homogeneous case, the space fractionality does not lead to a time-changing, but we still provide some results, including the moments, the scaling and the summation.

\section{Generalized Skellam family}\label{sezioneFamigliaSkellam}

Let $(\Omega, \mathcal{F}, \{\mathcal{F}\}_t, P)$ be a filetered probaility space where all the random elements will be defined and  $\mathcal{D}[0,\infty)$ the space of the real c\`{a}dl\`{a}g functions endowed with the Skorokhod topology.

The following statement helps us to give two different mathematical definitions of the generalized Skellam process. Thus, we define this stochastic process right after Theorem \ref{teoremaDefinzioniEquivalentiSkellamGeneralizzato} (before its proof).

\begin{theorem}\label{teoremaDefinzioniEquivalentiSkellamGeneralizzato}
Let $\mathcal{I}\subset \mathbb{R}\setminus\{0\}, \ |\mathcal{I}|<\infty$, and continuous $\lambda_i : [0,\infty] \longrightarrow [0, \infty)$ such that $\Lambda_i(t) = \int_0^t \lambda_i(s)\dif s<\infty,\ \ t\ge0,\ i\in \mathcal{I}$. Let $S$ be a stochastic process such that $S(0) = 0\ a.s.$ Then, the following statements are equivalent on $\mathcal{D}[0,\infty)$:
\begin{itemize}
\item[($i$)] $S$ has independent increments and for $t\ge0,\ n\in \text{Supp}\bigl(S(t)\bigr)$,
\begin{equation}\label{definizioneInfinitesimaleSkellamGeneralizzatoNonOmogeneo}
P\{S(t+\dif t) = n+i\,|\,S(t) = n\} = 
\begin{cases}
\begin{array}{l l}
\lambda_i(t)\dif t + o(\dif t), & i\in \mathcal{I},\\
1-\sum_{j\in\mathcal{I}} \lambda_j(t) \dif t + o(\dif t), & i=0, \\
o(\dif t), & \text{otherwise}.
\end{array}
\end{cases}
\end{equation}
\item[($ii$)] Let $N_i, i\in\mathcal{I}$, be independent Poisson processes with rate functions $\lambda_i$,
\begin{equation}\label{SkellamGeneralizzatoInTerminiDiPoisson}
S(t)=\sum_{i\in\mathcal{I}} i N_i(t),\ \ \ t\ge0.
\end{equation}
\end{itemize}
\end{theorem}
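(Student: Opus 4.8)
The plan is to prove the equivalence by showing each direction via the independent-increments structure and the characterization of the Poisson process through its infinitesimal transition probabilities. Both statements concern a process $S$ with independent increments and $S(0)=0$, so the whole argument reduces to analyzing the one-step (infinitesimal) behavior of the increments.

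First I would prove ($ii$)$\Rightarrow$($i$). Assuming $S(t)=\sum_{i\in\mathcal{I}} iN_i(t)$ with the $N_i$ independent non-homogeneous Poisson processes, independence of increments of $S$ follows immediately from independence of increments of each $N_i$ together with their mutual independence. For the infinitesimal transition probabilities, I would condition on the increments $N_i(t+\dif t)-N_i(t)$ over the small interval. Writing $S(t+\dif t)-S(t)=\sum_{i\in\mathcal{I}} i\bigl(N_i(t+\dif t)-N_i(t)\bigr)$, the standard Poisson infinitesimal law gives, for each $i$, $P\{N_i(t+\dif t)-N_i(t)=1\}=\lambda_i(t)\dif t+o(\dif t)$, $P\{=0\}=1-\lambda_i(t)\dif t+o(\dif t)$, and $P\{\ge 2\}=o(\dif t)$, with the increments for distinct $i$ being independent. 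The event that $S$ jumps by exactly $i\in\mathcal{I}$ is, up to $o(\dif t)$, the event that $N_i$ increases by $1$ while all other $N_j$ stay put; multiplying the probabilities (all but one of which are $1+o(\dif t)$) yields $\lambda_i(t)\dif t+o(\dif t)$. Events in which two or more of the $N_j$ jump simultaneously, or a single $N_j$ jumps by $\ge 2$, contribute only $o(\dif t)$, which accounts both for the $i=0$ case (collecting the complementary probability) and for the ``otherwise'' case. A mild subtlety is that several distinct index subsets could produce the same net displacement $n+i$ with $i\notin\mathcal{I}\cup\{0\}$, but each such contribution requires at least two simultaneous jumps and is therefore $o(\dif t)$.

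Next I would prove the converse ($i$)$\Rightarrow$($ii$), which I expect to be the main obstacle. Here I would define, for each $i\in\mathcal{I}$, a process $N_i$ by counting exactly the jumps of $S$ of size $i$, i.e. $N_i(t)=\#\{s\le t: S(s)-S(s^-)=i\}$, so that $S=\sum_{i\in\mathcal{I}} iN_i$ holds by construction (each jump of $S$ is, up to $o(\dif t)$ events of probability zero in the limit, of a size in $\mathcal{I}$). The work is to show that each $N_i$ is a non-homogeneous Poisson process with rate $\lambda_i$ and that the family is independent. I would do this by deriving the governing system of difference-differential (Kolmogorov forward) equations for the joint law of the increments from the infinitesimal description in \eqref{definizioneInfinitesimaleSkellamGeneralizzatoNonOmogeneo}, using the independent-increments hypothesis to reduce to the transition probabilities over $[t,t+\dif t]$. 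Passing to probability generating functions turns this into a first-order linear PDE whose unique solution, under $S(0)=0$, factorizes across the indices $i\in\mathcal{I}$ as a product of Poisson generating functions $\exp\{\Lambda_i(t)(u^i-1)\}$; reading off the factorization simultaneously gives the Poisson marginal laws, the correct rates $\lambda_i$, and the mutual independence.

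The hard part will be making the ``counting the $i$-jumps'' construction rigorous and verifying that the probability generating function genuinely factorizes rather than merely yielding the correct marginal moments. I would control this by writing $G(u,t)=E\,u^{S(t)}$ (extended to a multivariate generating function in distinct dummy variables $u_i$ tracking each jump type), differentiating in $t$ using \eqref{definizioneInfinitesimaleSkellamGeneralizzatoNonOmogeneo} to obtain $\partial_t G=\sum_{i\in\mathcal{I}}\lambda_i(t)\,(u_i^{?}-1)\,G$ with the appropriate monomials, and then integrating with the initial condition $G(\cdot,0)=1$. The uniqueness of the solution to this linear equation closes the argument, since the Poisson-product form from ($ii$) solves the same equation with the same initial data. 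Throughout, the only genuinely delicate point is the standard but necessary justification that the $o(\dif t)$ terms may be neglected when passing from the infinitesimal increments to the integrated generating-function equation.
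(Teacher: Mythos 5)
Your proposal is correct in substance, and its decisive mechanism coincides with the paper's: derive from \eqref{definizioneInfinitesimaleSkellamGeneralizzatoNonOmogeneo} the differential equation \eqref{equazioneDifferenzialeFunzioneGeneratrice} for the probability generating function, solve it with the initial condition $G_0(u)=1$, and observe that the unique solution is the exponential form \eqref{funzioneGeneratriceSkellamGeneralizzatoNonOmogeneo} computed directly from the representation $\sum_{i\in\mathcal{I}} iN_i(t)$; independence of increments then identifies the finite-dimensional laws. Two differences are worth recording. First, you prove ($ii$)$\Rightarrow$($i$) by conditioning directly on the Poisson increments over $[t,t+\dif t)$; the paper leaves this direction implicit in the identification of the two generating functions, so your version is more explicit, and your bookkeeping of compound events (two or more of the $N_j$ moving simultaneously, all of probability $o(\dif t)$) is the right way to handle it. Second, for ($i$)$\Rightarrow$($ii$) your primary route --- defining $N_i(t)=\#\{s\le t:\, S(s)-S(s^-)=i\}$ pathwise and proving joint Poisson structure via a multivariate generating function --- proves more than the theorem asserts and more than hypothesis ($i$) can support without extra work: ($i$) is a purely distributional assumption, so the existence of a c\`adl\`ag modification, the pure-jump identity $S(t)=\sum_{s\le t}\bigl(S(s)-S(s^-)\bigr)$ (i.e.\ absence of drift or continuous components), and even the a.s.\ well-definedness of the jump counts all require separate justification; moreover, passing from \eqref{definizioneInfinitesimaleSkellamGeneralizzatoNonOmogeneo}, which constrains only the \emph{net} displacement of $S$, to the joint infinitesimal law of the counting vector needs a bound showing that two or more jumps occur in $[t,t+\dif t)$ with probability $o(\dif t)$, which is not immediate from ($i$) alone. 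Since you explicitly fall back on the scalar generating-function uniqueness argument --- which suffices because the equivalence is an equality of laws, exactly as the paper reads it --- your proof closes at the same level of rigor as the paper's; but the pathwise construction should either be dropped or supplemented with these regularity arguments.
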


\begin{definition}[Non-homogeneous generalized Skellam process]
We define a process $S$ satisfying the conditions in Theorem \ref{teoremaDefinzioniEquivalentiSkellamGeneralizzato} a \textit{non-homogeneous generalized Skellam process with rate functions $\lambda_i, i\in \mathcal{I}$}. We denote it by $S\sim NHGSP\Bigl(\mathcal{I}, (\lambda_i, i\in \mathcal{I} )\Bigr)$.
\end{definition}
In the above notation the first element represents the set of possible jumps' size and the second one the corresponding rate functions. When possible we also use the shortened notation $S\sim NHGSP(\lambda_i, i\in \mathcal{I} )$. Note that the condition $\Lambda(t)<\infty, \ t\ge0$, implies that $S$ performs a finite number of (finite) jumps in bounded intervals of time (see Remark \ref{remarkSkellamInsiemeAmpiezzaSaltiNumerabile} for the case of countable $\mathcal{I}$) and for all $t>0$ the support of the process is $\mathcal{S} = \text{Supp}\bigl(S(t)\bigr) = \big\{ \sum_{i\in \mathcal{I}} i n_i\,:\, n_i \in\mathbb{N}_0\ \forall\ i\big\}$, which is a countable set.

\begin{proof}[Proof (Theorem \ref{teoremaDefinzioniEquivalentiSkellamGeneralizzato})]

We start by observing that the process in ($ii$) has independent increments since the $N_i$ have this property for all $i\in\mathcal{I}$ and they are independent processes. Then, we readily derive the probability generating function of the process at time $t\ge0$.
\begin{equation}\label{funzioneGeneratriceSkellamGeneralizzatoNonOmogeneo}
G_{S(t)}(u) = \mathbb{E} u^{\sum_{i\in\mathcal{I}} i N_i(t)} = \prod_{i\in\mathcal{I}}\mathbb{E} \bigl(u^{i}\bigr)^{N_i(t)} = e^{-\sum_{i\in\mathcal{I}} \Lambda_i(t)(1-u^i) }.
\end{equation}

We now prove that the probability generating function of the process in ($i$) at time $t\ge0$ coincides with (\ref{funzioneGeneratriceSkellamGeneralizzatoNonOmogeneo}).
Let $p_n(t) = P\{S(t) = n\}$, from (\ref{definizioneInfinitesimaleSkellamGeneralizzatoNonOmogeneo}) we obtain the difference-differential equation
\begin{equation}\label{equazioneDifferenzeDifferenzialeSkellamGeneralizzato}
\frac{\dif }{\dif t} p_n(t)= -\sum_{i\in\mathcal{I}} \lambda_i(t) p_n(t) + \sum_{i\in\mathcal{I}}\lambda_i(t) p_{n-i}(t), \ \ \ t \ge0, n\in \mathcal{S}.
\end{equation}
Now, it holds that $ \sum_{n\in \mathcal{S}}  u^n p_{n-i}(t) = u^i \sum_{n\in \mathcal{S}-i}  u^n p_{n}(t) = u^i \sum_{n\in \mathcal{S}}  u^n p_{n}(t)= u^i G_t(u) $ for suitable $u$ in a neighborhood of $0$. Then, we can transform (\ref{equazioneDifferenzeDifferenzialeSkellamGeneralizzato}) into 
\begin{equation}\label{equazioneDifferenzialeFunzioneGeneratrice}
 \frac{\partial }{\partial t} G_t(u) = -G_t(u)\,\sum_{i\in\mathcal{I}} \lambda_i(t)(1-u^i ), \ \ \ t\ge0.
\end{equation}
It is now straightforward to see that the probability generating function emerging from (\ref{equazioneDifferenzialeFunzioneGeneratrice}) coincides with  (\ref {funzioneGeneratriceSkellamGeneralizzatoNonOmogeneo}). Finally, the independence of the increments and the exponential form of the probability generating function \eqref{funzioneGeneratriceSkellamGeneralizzatoNonOmogeneo} imply the equality in distribution of the increments and therefore the equality on $\mathcal{D}[0,\infty)$.
\end{proof}

From the probability generating function (\ref{funzioneGeneratriceSkellamGeneralizzatoNonOmogeneo}) we derive the moment generating function, with suitable $\mu\in\mathbb{R}$ (meaning that it allows convergence),
\begin{equation}\label{funzioneGeneratriceMomentiSkellamGeneralizzatoNonOmogeneo}
\mathbb{E} e^{\mu S(t)} =  e^{-\sum_{i\in\mathcal{I}} \Lambda_i(t)(1-e^{i\mu}) }.
\end{equation}

\begin{remark}\label{remarkIncrementiSkellamGeneralizzato}
The increments of the Skellam process define also a Skellam process with "delayed" rate function, meaning that, with $s\ge0$, $\{S(t+s)-S(s)\}_{t\ge0} \sim NHSKP(\lambda^s_i, i\in \mathcal{I})$ with $\lambda_i^s(t) = \lambda_i(s+t)$. Indeed, by the independence of the increments of $S$ we derive that
\begin{align}
\mathbb{E}u^{S(t+s)-S(s)}& = e^{-\sum_{i\in\mathcal{I}}\bigl(\Lambda_i(t+s)-\Lambda_i(s)\bigr)(1-u^i) }  \nonumber \\
&= e^{-\sum_{i\in\mathcal{I}}\Lambda_i(s,t+s)(1-u^i)} = e^{-\sum_{i\in\mathcal{I}}\int_0^t\lambda_i(z+s)\dif z(1-u^i)}.\label{incrementoSkellamGeneralizzato}
\end{align}

Note that if and only if the rate functions are constant in time, the increments of the process are also stationary. In this case we have a homogeneous version, further discussed below, see Section \ref{sottosezioneCasoOmogeneo}.
\hfill$\diamond$
\end{remark}

\begin{example}[Poisson process of order $K$]
If $\mathcal{I} = \{1\}$, $S$ reduces to the non-homogeneous Poisson process. Let $K\in\mathbb{N}$. If $\mathcal{I} = \{1,\dots,K\}$, $S$ is the so called Poisson process of order $K$ presented in \cite{P1984}, also known as generalized counting process (see \cite{BS2024, DcMM2016}).
\hfill$\diamond$
\end{example}

\begin{example}[Skellam process of order $K$]
If $\mathcal{I} = \{-1,1\}$, $S$ is a classical non-homogenous Skellam process. It is well-known that $S(t)$ is a Skellam random variable and its distribution is expressed in terms of the modified Bessel function (similarly to \eqref{introduzioneLeggeSkellam}). Let $S\sim NHGSP (\lambda_{1}, \lambda_{-1})$,
\begin{equation}\label{probabilitaMassaSkellamClassico}
P\{S(t) = n\}= e^{-\bigl(\Lambda_1(t)+\Lambda_{-1}(t)\bigr)} \Biggl(\sqrt{\frac{\Lambda_1(t)}{\Lambda_{-1}(t)}}\Biggr)^n I_n\Bigl(2\sqrt{\Lambda_1(t)\Lambda_{-1}(t)}\Bigr), \ \ \ n\in\mathbb{Z}, \ t\ge0.
\end{equation}
Let $K\in\mathbb{N}$. If $\mathcal{I} = \{-K,\dots,-1,1,\dots,K\}$, $S$ is called Skellam process of order $K$ and it has been recently studied in the paper \cite{GKL2020}.
\hfill$\diamond$
\end{example}

\begin{remark}[Moments]\label{remarkMomentiSkellamGeneralizzato}
From (\ref{SkellamGeneralizzatoInTerminiDiPoisson}) or the moments generating function (\ref{funzioneGeneratriceMomentiSkellamGeneralizzatoNonOmogeneo}) it is easy to derive the moments of the generalized Skellam process. For $t\ge0$,
\begin{align}
&\mathbb{E}S(t) = \sum_{i\in\mathcal{I}} i \Lambda_i(t), \ \ \  \text{Var}S(t) = \sum_{i\in\mathcal{I}} i^2 \Lambda_i(t),  \label{momentiPrimoSecondoSkellamGeneralizzatoNonOmogeneo}\\ 
& \mathbb{E} \Bigl(S(t)- \mathbb{E}S(t)\Bigr)^3 = \sum_{i\in\mathcal{I}} i^3\Lambda_{i}(t),\ \ \  \mathbb{E} \Bigl(S(t)- \mathbb{E}S(t)\Bigr)^4 = \sum_{i\in\mathcal{I}} i^4\Lambda_{i}(t) + 3\Bigl(\text{Var}S(t)\Bigr)^2. \label{momentiCentraliTerzoQuartoSekllamGeneralizzato}
\end{align}
In order to derive (\ref{momentiCentraliTerzoQuartoSekllamGeneralizzato}) we suggest to use the moment generating function, obtain the third and fourth moment of $S(t)$ respectively and then extract the central moment of interest by considering the expressions in (\ref{momentiPrimoSecondoSkellamGeneralizzatoNonOmogeneo}).
 \\
Furthermore, for $0\le s \le t$, recalling that $\text{Cov}\bigl(N_i(s), N_i(t)\bigr) = \Lambda_i(s)$,
\begin{align}
\text{Cov}\bigl((S(s),S(t)\bigr) & =\sum_{i\in\mathcal{I}} \text{Cov}\bigl(iN_i(s), iN_i(t)\bigr)  = \sum_{i\in\mathcal{I}} i^2 \Lambda_i(s) =\text{Var}S(s). \label{covarianzaSkellamGeneralizzato}
\end{align}
One can also compute the covariance without using (\ref{SkellamGeneralizzatoInTerminiDiPoisson}), but just the independence of the increments, $ \text{Cov}\bigl((S(s),S(t)\bigr) = \mathbb{E}S(s)\mathbb{E}\Bigl[S(t)-S(s)\Bigr] + \mathbb{E}S(s)^2 -  \mathbb{E}S(s)\mathbb{E}S(t) $. Note that formula (\ref{covarianzaSkellamGeneralizzato})  depends on the lower time only.

From (\ref{momentiPrimoSecondoSkellamGeneralizzatoNonOmogeneo}) follows the Fisher index $\text{FI} \bigl[ S(t) \bigr] = \frac{\text{Var}S(t)}{\mathbb{E}S(t)} = \frac{\sum_{i\in\mathcal{I}} i^2 \Lambda_i(t)}{\sum_{i\in\mathcal{I}} i \Lambda_i(t)}.$ The dispersion of $S(t)$ depends on both the rate functions and the set of the jumps size $\mathcal{I}$. We can state that if the jumps are integers or in absolute value greater or equal than $1$, $S(t)$ is over-dispersed (Fisher index $>1$). As well-known, the Skellam process is equi-dispersed (Fisher index equal to $1$) and the Skellam process of order $K$ is over-dispersed.

Finally, from the above formulas we obtain that for $0\le s<t$, with $t\rightarrow\infty$,
\begin{align}
\text{Corr}\bigl((S(s),S(t)\bigr)  =\sqrt{\frac{\sum_{i\in\mathcal{I}} i^2\Lambda_i(s)}{\sum_{i\in\mathcal{I}} i^2\Lambda_i(t)}} \sim \frac{1}{\sqrt{\sum_{i\in\mathcal{I}} i^2\Lambda_i(t)}}\sim \frac{1}{\sqrt{\Lambda_{i^*}(t)}},
\end{align}
where $\Lambda_{i^*}$ denotes the element which grows faster among the $\Lambda_i$. 
For instance, if $\lambda_i(t) \sim t^{\alpha_i}$, with $\alpha_i>-1,\ i\in\mathcal{I}$, then $\text{Cor}\bigl((S(s),S(t)\bigr)\sim t^{-(\max_{i\in\mathcal{I}} \alpha_i + 1)/2}$. We say that $S$ has long-range dependency if $\max_{i\in\mathcal{I}} \alpha_i<1$ (see Definition 3 of \cite{GKL2020} for more details on this property and particular cases of the above analysis).
\hfill$\diamond$
\end{remark}

\begin{remark}\label{remarkPoissonContaSaltiTempiDiArrivo}
In light of (\ref{SkellamGeneralizzatoInTerminiDiPoisson}) it is straightforward to derive that, for $i\in\mathcal{I}$, the Poisson process $N_i$ counts the jumps of size $i$ and the total number of jumps is given by the process $\sum_{i\in\mathcal{I}} N_i $ which is a Poisson process with rate function $\sum_{i\in\mathcal{I}} \lambda_i$.
With this at hand, it follows that the arrival times of the jumps are those of the above mentioned Poisson processes. In the case of a homogeneous generalized Skellam process, $\lambda_i(t) = \lambda_i,\ t\ge0$, the waiting times are independent exponentially distributed random variables.
\hfill$\diamond$
\end{remark}

\begin{proposition}\label{proposizioneCombinazioneLineareProcessiSkellamGenerlizzati}
\begin{itemize}
\item[($i$)] Let $S\sim NHGSP\Bigl(\mathcal{I}, (\lambda_i, i\in \mathcal{I} )\Bigr)$, then, with $a\in\mathbb{R}$
\begin{equation}\label{tesiSkellamPerCoefficiente}
aS\sim NHGSP\Bigl(a\mathcal{I}, (\lambda_{i/a}, i\in a\mathcal{I} )\Bigr).
\end{equation}
\item[($ii$)] Let $S_n\sim \Bigl(\mathcal{I}_n, (\lambda_i^{(n)},\ i\in \mathcal{I}_n )\Bigr)$, with $n\in\mathbb{N}$, be independent generalized Skellam processes such that $|\bigcup_{n=1}^\infty  \mathcal{I}_n |<\infty$ and $\sum_{n=1}^\infty \Lambda^{(n)}_i(t) <\infty,\  t\ge0, \ i\in\bigcup_{n=1}^\infty  \mathcal{I}_n$. Then,
\begin{equation}
\sum_{n=1}^\infty S_n\sim NHGSP\Biggl(\bigcup_{n=1}^\infty\mathcal{I}_n, \Bigl(\sum_{n=1}^\infty\lambda_i^{(n)},\ i\in \bigcup_{n=1}^\infty\mathcal{I}_n \Bigr)\Biggr)
\end{equation}
 where we define $\lambda_i^{(n)}, \Lambda_i^{(n)}\equiv 0 $ if  $i\not\in\mathcal{I}_n$.
\end{itemize}
\end{proposition}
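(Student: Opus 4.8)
The plan is to read off both statements from the Poisson representation $(ii)$ and the probability generating function (\ref{funzioneGeneratriceSkellamGeneralizzatoNonOmogeneo}) of Theorem \ref{teoremaDefinzioniEquivalentiSkellamGeneralizzato}. The key observation is that, by that equivalence, a process starting at $0$ is a $NHGSP$ as soon as it can be written as a linear combination $\sum_i i\,M_i$ of independent Poisson processes $M_i$ with the prescribed rates (equivalently, as soon as it has independent increments and the generating function of the form (\ref{funzioneGeneratriceSkellamGeneralizzatoNonOmogeneo})). Both parts then reduce to a bookkeeping of indices.

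For part $(i)$, with $a\neq 0$, I would simply relabel. Writing $S = \sum_{i\in\mathcal{I}} i N_i$ with independent Poisson processes $N_i$ of rate $\lambda_i$, multiplication by $a$ gives $aS = \sum_{i\in\mathcal{I}} (ai)N_i$. Setting $j = ai$, so that $i = j/a$ and $j$ ranges over $a\mathcal{I}$ as $i$ ranges over $\mathcal{I}$, this becomes $aS = \sum_{j\in a\mathcal{I}} j\,N_{j/a}$, a linear combination of the independent Poisson processes $N_{j/a}$ of rate $\lambda_{j/a}$. By condition $(ii)$ of Theorem \ref{teoremaDefinzioniEquivalentiSkellamGeneralizzato} this is exactly (\ref{tesiSkellamPerCoefficiente}); the case $a=0$ is the trivial degenerate process $aS\equiv 0$.

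For part $(ii)$ I would first check that $\sum_{n=1}^\infty S_n$ is well defined: since $\bigl|\bigcup_n\mathcal{I}_n\bigr|<\infty$ and $\sum_n\Lambda_i^{(n)}(t)<\infty$ for each $i$ and $t$, the total jump activity up to any time $t$ is a.s. finite, so the series converges a.s. and starts at $0$. Independent increments of the limit follow from those of each $S_n$ together with the mutual independence of the $S_n$. It then remains to compute the generating function: using independence of the $S_n$ and (\ref{funzioneGeneratriceSkellamGeneralizzatoNonOmogeneo}),
\begin{equation*}
\mathbb{E}u^{\sum_n S_n(t)} = \prod_{n=1}^\infty \mathbb{E}u^{S_n(t)} = \prod_{n=1}^\infty e^{-\sum_{i\in\mathcal{I}_n}\Lambda_i^{(n)}(t)(1-u^i)} = e^{-\sum_{i\in\bigcup_n\mathcal{I}_n}\bigl(\sum_{n=1}^\infty\Lambda_i^{(n)}(t)\bigr)(1-u^i)},
\end{equation*}
where the interchange of the sums is legitimate because $\bigcup_n\mathcal{I}_n$ is finite and each inner series converges by hypothesis. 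This is precisely the generating function of the claimed $NHGSP$, and Theorem \ref{teoremaDefinzioniEquivalentiSkellamGeneralizzato} concludes.

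The step that I expect to require the most care is part $(ii)$: making rigorous the a.s.\ convergence of $\sum_n S_n$ and the attendant interchange of product and sum. If one instead prefers the purely representation-based route, writing each $S_n=\sum_{i\in\mathcal{I}_n} i N_i^{(n)}$ (with the convention $N_i^{(n)}\equiv 0$ for $i\notin\mathcal{I}_n$) and grouping by jump size as $\sum_n S_n = \sum_{i\in\bigcup_n\mathcal{I}_n} i\sum_{n} N_i^{(n)}$, the delicate point becomes verifying that the superposed processes $\tilde N_i = \sum_n N_i^{(n)}$ are independent across $i$. This is not immediate from the independence of the $S_n$ alone, but follows from Remark \ref{remarkPoissonContaSaltiTempiDiArrivo}: each $N_i^{(n)}$ counts the jumps of size $i$ of $S_n$ and is therefore $\sigma(S_n)$-measurable, so the whole array $\{N_i^{(n)}\}$ is mutually independent; consequently the $\tilde N_i$, built from disjoint blocks of this array, are independent Poisson processes of rate $\sum_n\lambda_i^{(n)}$, and another appeal to condition $(ii)$ of Theorem \ref{teoremaDefinzioniEquivalentiSkellamGeneralizzato} finishes the argument.
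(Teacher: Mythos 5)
Your proposal is correct, and your second, representation-based route for part ($ii$) is exactly the paper's proof: the paper writes $\sum_{n=1}^\infty S_n(t) = \sum_{i\in\bigcup_n\mathcal{I}_n} i\,\sum_{n=1}^\infty N_i^{(n)}(t)$ (with the convention $N_i^{(n)}\equiv 0$ for $i\notin\mathcal{I}_n$) and concludes by noting that the superposition of the Poisson processes is a Poisson process with rate function $\sum_n\lambda_i^{(n)}$, while part ($i$) is dispatched by relabelling in (\ref{SkellamGeneralizzatoInTerminiDiPoisson}), just as you do. Your primary route via generating functions is only a mild variant: it rests on the same equivalence of Theorem \ref{teoremaDefinzioniEquivalentiSkellamGeneralizzato}, with the increments' laws recovered from ratios of the non-vanishing exponential p.g.f.s as in Remark \ref{remarkIncrementiSkellamGeneralizzato}, so the two approaches buy essentially the same thing. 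One point in your favour: your observation that the independence across $i$ of the superposed processes $\tilde{N}_i=\sum_n N_i^{(n)}$ does not follow from the independence of the $S_n$ alone, and your resolution of it via Remark \ref{remarkPoissonContaSaltiTempiDiArrivo} (each $N_i^{(n)}$ is the size-$i$ jump count of $S_n$, hence $\sigma(S_n)$-measurable, so the whole array $\{N_i^{(n)}\}_{i,n}$ is mutually independent and the $\tilde{N}_i$, built from disjoint blocks, are independent), supplies a detail that the paper's one-line proof leaves implicit but that is genuinely needed to invoke condition ($ii$) of Theorem \ref{teoremaDefinzioniEquivalentiSkellamGeneralizzato}, which requires the family $\tilde{N}_i$, $i\in\bigcup_n\mathcal{I}_n$, to be independent.
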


Note that in point ($i$) the intensity functions remain the same and only the jumps' size changes. 

Points ($i$) and ($ii$) mean that the generalized Skellam processes define a class closed  with respect to linear combinations (under the assumptions in ($ii$), which are not required in the case of finite linear combinations). 

\begin{proof}  %($i$) We consider the probability generating function (\ref{funzioneGeneratriceSkellamGeneralizzatoNonOmogeneo}) and observe that
%\begin{equation}
%\mathbb{E} u^{aS(t)}= e^{-\sum_{i\in\mathcal{I}} \Lambda_i(t)(1-u^{ai}) } = e^{-\sum_{i\in\mathcal{aI}} \Lambda_{i/a}(t)(1-u^i) },
%\end{equation}
%which is the probability generating function of the process (\ref{tesiSkellamPerCoefficiente}) at time $t\ge0$.
By means of (\ref{SkellamGeneralizzatoInTerminiDiPoisson}) point ($i$) readily follows and, concerning point ($ii$), we have that for each $t\ge0$
\begin{equation}
\sum_{n=1}^\infty S_n(t) = \sum_{n=1}^\infty \sum_{i\in\mathcal{I}_n} i N_i^{(n)}(t) = \sum_{i\in\bigcup_n\mathcal{I}_n} i\ \sum_{n=1}^\infty N_i^{(n)}(t),
\end{equation}
where in the last term $N_i^{(n)} \equiv 0$ if $i\not\in\mathcal{I}_n$. The proof concludes by observing that under the hypotheses on the rate functions the series of Poisson processes is a Poisson process with rate function $\sum_{i\in\mathcal{I}_n} \lambda_i^{(n)}$.
\end{proof}

\begin{example}
Let $K\in\mathbb{N}$ and $S_1,\dots,S_K$ be independent classical Skellam processes with rates functions $\lambda_1^{(k)},\lambda_{-1}^{(k)},\ k=1,\dots, K$. The process $-S_1$ has rate function $\lambda_1^{(1)}$ to move one step downward and $\lambda_{-1}^{(1)}$ to move one step upward. In light of ($ii$) of Proposition \ref{proposizioneCombinazioneLineareProcessiSkellamGenerlizzati} the process $S_1+\dots + S_K$ has rate function  $\lambda_{-1}^{(1)}+\dots+\lambda_{-1}^{(K)}$ to move one step downward and $\lambda_1^{(1)}+\dots+\lambda_1^{(K)}$ to move one step upward. Thus, the probability mass function of $-S_1$ and $S_1+\dots+S_2$ follows form (\ref{probabilitaMassaSkellamClassico}). In particular, for $n\in\mathbb{Z}$, 
\begin{equation}\label{probabilitaMassaSkellamClassicoSommaK}
P\big\{S_1(t)+\dots+S_K(t) = n\big\}= e^{-\left(\Lambda^+(t)+\Lambda^-(t)\right)} \sqrt{\frac{\Lambda^+(t)}{\Lambda^-(t)}} I_n\Bigl(2\sqrt{\Lambda^+(t)\Lambda^-(t)}\Bigr), \ \ \ n\in\mathbb{Z}, \ t\ge0,
\end{equation}
where $\Lambda^+ = \Lambda_{1}^{(1)}+\dots+\Lambda_{1}^{(K)}$ and $\Lambda^- = \Lambda_{-1}^{(1)}+\dots+\Lambda_{-1}^{(K)}$.
\\
Another example of Proposition \ref{proposizioneCombinazioneLineareProcessiSkellamGenerlizzati} is the well-known result that the sum and difference of Skellam processes of order $K$ are still the same type of process with suitable rate functions. We point out that the distribution of this process has been recently erroneously written like (\ref{probabilitaMassaSkellamClassicoSommaK}). From (\ref{SkellamGeneralizzatoInTerminiDiPoisson}) it is clear that the Skellam process of order $K$ is different in distribution from the sum of $K$ classical Skellam processes; indeed, if $K=2$, by using the above notation and by denoting with $S^{(2)}$ a Skellam process of order $2$, we have $S^{(2)} = \sum_{\substack{i=-2\\i\not=0}}^2 i N_i = S_1 + 2S_2\not= S_1+S_2 $.
\hfill$\diamond$
\end{example}

\begin{remark}[Countable $\mathcal{I}$]\label{remarkSkellamInsiemeAmpiezzaSaltiNumerabile}
We point out that in the case of $\mathcal{I}$ with a countable number of elements, the above results hold if we add some hypotheses which allow convergence and exchangeability of integrals and series.

From Theorem \ref{teoremaDefinzioniEquivalentiSkellamGeneralizzato} it is clear that we need a condition on the rate functions and for the convergence of the probability generating function. %Indeed, $\mathcal{I}$ can contain both positive and negative elements, creating problems when it is unbounded. One can assume % the following hypotheses: for all $t\ge0$ there exists $\varepsilon>0$ such that
%\begin{equation}\label{condizioneConvergenzaFunzioneGeneratriceProbabilita}
%\forall \ t\ge0\ \ \exists\ \varepsilon>0 \ \ s.t.\ \ \bigg| \sum_{i\in\mathcal{I}} (u^i-1)\Lambda_i(t)\bigg| <\infty,\ \ \ \varepsilon<|u|<1-\varepsilon.
%\end{equation}
%Condition (\ref{condizioneConvergenzaFunzioneGeneratriceProbabilita}) implies
Let us consider the process in a finite time interval $[0,T]$, with $T>0$. One can assume that %for all $t\ge0$, $\sum_{i\in\mathcal{I}} \Lambda_i(t) <\infty$ (which implies the hypothesis at the beginning of the statement of Theorem \ref{teoremaDefinzioniEquivalentiSkellamGeneralizzato}) and
there exists $\varepsilon\in (0,1)$ such that
\begin{equation}\label{condizioneConvergenzaFunzioneGeneratriceProbabilita}
\bigg| \sum_{i\in\mathcal{I}} u^i\Lambda_i(T)\bigg| <\infty,\ \ \ \varepsilon<|u|\le 1.
\end{equation}
Condition \eqref{condizioneConvergenzaFunzioneGeneratriceProbabilita} implies that $\sum_{i\in\mathcal{I}} \Lambda_i(t)<\infty, 0\le t \le T,$ since the $\Lambda_is$ are  non-decreasing.
Furthermore, in the proof of Theorem \ref{teoremaDefinzioniEquivalentiSkellamGeneralizzato} one needs the exchangeability between integral and series while solving equation (\ref{equazioneDifferenzialeFunzioneGeneratrice}). 

Note that the moments in Remark \ref{remarkMomentiSkellamGeneralizzato} may diverge.

We point out that if one assumes that $\mathcal{I}$ is bounded (and countable), then  $u^i$ is bounded as well and it is sufficient to assume that $\sum_{i\in\mathcal{I}} \Lambda_i(T) <\infty$. This reduces to $\sum_{i\in\mathcal{I}} \lambda_i<\infty $ in the homogeneous case (i.e. $\lambda_i(t) = \lambda_i,\ t\ge0,\ i\in\mathcal{I}$).
\hfill$\diamond$
\end{remark}

\subsection{Decomposition of Skellam processes}\label{sezioneDecomposizione}

We now discuss a quite general decomposition of a Skellam process into two subprocesses, $S_1,S_2,$ by splitting the jumps that the process performs. We refer to \cite{DK2024} for some recent results on the decomposition of a general counting process (i.e. of a Poisson-type process), which can be seen as a particular case of the below dissertation.

We consider a splitting rule in which each jump (of size $i\in\mathcal{I}$) is split among the two subprocesses $S_1$ and $S_2$ according to a fixed probabilistic rule, maintaining the sign of the jump. Formally we describe the problem as follows.

For the sake of clarity we here assume that $\mathcal{I}\subset\mathbb{Z}\setminus\{0\}$. Let $N$ be the Poisson process counting the jumps of the generalized Skellam process $S$ (see Remark \ref{remarkPoissonContaSaltiTempiDiArrivo}). For $t\ge0$, each jump $X_k$, with $k = 1,\dots, N(t)$, is split into 
\begin{equation}\label{saltiDecomposizioneSkellam}
	X^1_k = \text{sgn}(X_k)Y_k\ \text{ and } \ X^2_k = \text{sgn}(X_k) \bigl(|X_k| - Y_k\bigr)
\end{equation}
where $Y_k$ is a random variable having support contained in $\bigl[0, |X_k|\bigr]$. Then, the portion $X_k^1$ contributes to the subprocess $S_1$ and $X_k^2$ contributes to $S_2$, i.e. $S_j(t) = \sum_{k=0}^{N(t)} X_k^j,\ t\ge0, j=1,2$. 

We assume that $Y_1,Y_2,\dots$ are i.i.d. with support in $\Bigl[0, \max\big\{\max\{\mathcal{I}\}, -\min\{\mathcal{I}\}\big\}\Bigr]$. It is obvious that if $Y_k$ are continuous random variables, the split processes do not belong to the Skellam family. On the other hand, the next theorem states that if $Y_k$ are discrete, the subprocesses belong to the Skellam family.

Hereafter we denote with $q(j;i) =  P\{Y_k = j \,|\,X_k = i\},\ i\in\mathcal{I}, \ j=0,\dots, |i|, \ k\in\mathbb{N}$, also meaning that $\sum_{j=0}^{|i|} q(j;i)=1$.

\begin{theorem}\label{teoremaDecomposizioneSkellam}
	Let $\mathcal{I}\subset\mathbb{Z}$. $S\sim NHGSP (\lambda_i, i\in \mathcal{I} )$ and $S_1$ and $S_2$ be the splitted processes described above. Then, $S_1$ and $S_2$ are Skellam processes performing jumps of size in $\Big\{\min\big\{0,\min\mathcal{I}\big\},\dots, \max\big\{0,\max\mathcal{I}\big\}\Big\}\setminus \{0\}$,
	\begin{align}
		&S_1\sim NHGSP \Biggl(  \sum_{\substack{i\ge j\\i\in\mathcal{I}^+}}\lambda_i q\big(j;i\big),\ 1\le j\le \max\mathcal{I};\ \sum_{\substack{i\le j\\i\in\mathcal{I}^-}}\lambda_i q\big(|j|;i\big), \ -1\ge j\ge \min\mathcal{I} \Biggr) \label{decomposizioneSkellamPrimaComponente} \\
		&S_2\sim NHGSP \Biggl(  \sum_{\substack{i\ge j\\i\in\mathcal{I}^+}}\lambda_i q\big(i-j;i\big),\  1\le j\le \max\mathcal{I};\ \sum_{\substack{i\le j\\i\in\mathcal{I}^-}}\lambda_i q\big(|i|-|j|;i\big),\ -1\ge j\ge \min\mathcal{I} \Biggr) \label{decomposizioneSkellamSecondaComponente}
	\end{align}
	where $\mathcal{I}^+ = \mathcal{I}\cap(0,\infty)$ and $\mathcal{I}^- = \mathcal{I}\cap(-\infty,0)$.
	Furthermore, 
	\begin{equation}\label{covarianzaComponentiScomposizione}
		\text{Cov}\Big(S_1(t), S_2(t)\Big) = \sum_{i\in\mathcal{I}} \Lambda_i(t) \mathbb{E}\Bigl[Y\bigl(|X|-Y\big)\big|X=i\Bigr],\ \ \ t\ge0,
	\end{equation}
	where $X,Y$ are respectively distributed as the $X_k$ and the $Y_k$ in (\ref{saltiDecomposizioneSkellam}).
\end{theorem}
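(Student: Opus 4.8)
The plan is to compute the joint probability generating function of $\bigl(S_1(t),S_2(t)\bigr)$, read off the two marginal laws and then the covariance from it; this is the detailed computation already invoked in the proof of Proposition \ref{proposizioneCoppiaSkellamScompostoBernoulli}. The underlying structure, which also makes the membership in the Skellam family transparent, is a colouring of the jumps: by Remark \ref{remarkPoissonContaSaltiTempiDiArrivo} the size-$i$ jumps are counted by the independent Poisson process $N_i$, and colouring each of its points independently by the split value $Y$ (with law $q(\cdot\,;i)$) yields, via the Poisson colouring theorem, independent Poisson processes $N_{i,j}$ of rate $\lambda_i q(j;i)$ counting the size-$i$ jumps split as $Y=j$. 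Writing $U_i=\text{sgn}(i)\,Y$ and $V_i=\text{sgn}(i)\bigl(|i|-Y\bigr)$ for the increments one size-$i$ jump delivers to $S_1$ and to $S_2$, we have the representation
\[
S_1(t)=\sum_{i\in\mathcal{I}}\sum_{k=1}^{N_i(t)} U_i^{(k)},\qquad S_2(t)=\sum_{i\in\mathcal{I}}\sum_{k=1}^{N_i(t)} V_i^{(k)},
\]
with the pairs $\bigl(U_i^{(k)},V_i^{(k)}\bigr)_k$ i.i.d.\ in $k$ and independent in $i$.

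First I would compute the joint generating function. Grouping by jump size, using the independence across $i$ and conditioning on each $N_i(t)$, a Poisson variable of mean $\Lambda_i(t)$, the compound structure gives
\[
G_{S_1(t),S_2(t)}(u,v)=\prod_{i\in\mathcal{I}}\mathbb{E}\Bigl[\phi_i(u,v)^{N_i(t)}\Bigr]=\exp\Biggl(-\sum_{i\in\mathcal{I}}\Lambda_i(t)\bigl(1-\phi_i(u,v)\bigr)\Biggr),
\]
where $\phi_i(u,v)=\mathbb{E}\bigl[u^{U_i}v^{V_i}\bigr]$ equals $\sum_{j=0}^{i}q(j;i)\,u^{j}v^{i-j}$ for $i\in\mathcal{I}^+$ and $\sum_{j=0}^{|i|}q(j;i)\,u^{-j}v^{-(|i|-j)}$ for $i\in\mathcal{I}^-$.

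Next I would extract the marginals. Setting $v=1$ and using $\sum_j q(j;i)=1$, the $j=0$ term cancels and $1-\phi_i(u,1)=\sum_{j=1}^{|i|}q(j;i)\bigl(1-u^{\text{sgn}(i)\,j}\bigr)$. Interchanging the sums over $i$ and over the split value $j$ and collecting, for each target size, the coefficient multiplying the corresponding factor $1-u^{m}$, one gets the rate $\sum_{i\ge j,\,i\in\mathcal{I}^+}\lambda_i q(j;i)$ for a positive target size $m=j$ and, after noting that $i\le m<0$ with $|i|\ge|m|=j$, the rate $\sum_{i\le m,\,i\in\mathcal{I}^-}\lambda_i q(|m|;i)$ for a negative target size $m=-j$; comparison with the generalized Skellam form (\ref{funzioneGeneratriceSkellamGeneralizzatoNonOmogeneo}) gives (\ref{decomposizioneSkellamPrimaComponente}). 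Setting $u=1$ and relabelling $\ell=i-j$ for $i\in\mathcal{I}^+$, respectively $\ell=|i|-j$ for $i\in\mathcal{I}^-$, gives (\ref{decomposizioneSkellamSecondaComponente}) for $S_2$. The colouring of the preceding paragraph shows that each of $S_1,S_2$ is a linear combination of independent Poisson processes (the streams $N_{i,j}$ regrouped by target size), hence a generalized Skellam process by Theorem \ref{teoremaDefinzioniEquivalentiSkellamGeneralizzato}$(ii)$; the admissible target sizes are precisely the nonzero elements of $\{\min(0,\min\mathcal{I}),\dots,\max(0,\max\mathcal{I})\}$, while the dependence of $S_1$ and $S_2$ is immediate since both are built from the common streams $N_{i,j}$ and $G_{S_1(t),S_2(t)}$ does not factor into the product of its marginals.

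Finally, for the covariance I would argue directly rather than differentiate. Summands attached to different jump sizes are independent, so all cross terms vanish and $\text{Cov}\bigl(S_1(t),S_2(t)\bigr)=\sum_{i\in\mathcal{I}}\text{Cov}(A_i,B_i)$, where $A_i,B_i$ are the size-$i$ contributions to $S_1,S_2$. Each pair $(A_i,B_i)$ is a bivariate compound Poisson sum sharing the index $N_i(t)$, for which the standard identity yields $\text{Cov}(A_i,B_i)=\Lambda_i(t)\,\mathbb{E}[U_iV_i]$; since $U_iV_i=\text{sgn}(i)^2\,Y(|i|-Y)=Y(|i|-Y)$, summing over $i$ produces (\ref{covarianzaComponentiScomposizione}). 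I expect the only genuinely delicate point to be the sign bookkeeping and re-indexing for $i\in\mathcal{I}^-$: there $\text{sgn}(i)=-1$ turns a split $Y=j$ into the downward increments $-j$ for $S_1$ and $-(|i|-j)$ for $S_2$, so one must track carefully which $j$ feed a given negative target size, whereas the positive part is entirely transparent.
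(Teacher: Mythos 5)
Your proposal is correct, and it reaches the same key intermediate object as the paper --- the joint generating function \eqref{funzioneGeneratriceDoppiaScomposizione} --- but by a genuinely different route. The paper works entirely from the infinitesimal description: it writes down the joint transition probabilities \eqref{definizioneInfinitesimaleSkellamGeneralizzatoNonOmogeneoScomposizione}, converts them into a difference-differential equation for the joint probability mass function, passes to the corresponding ODE for $G_t(u,v)$ and solves it; the marginals \eqref{decomposizioneSkellamPrimaComponente}--\eqref{decomposizioneSkellamSecondaComponente} then come from setting $v=1$ (resp. $u=1$) and matching against \eqref{funzioneGeneratriceSkellamGeneralizzatoNonOmogeneo}, and the covariance \eqref{covarianzaComponentiScomposizione} from the mixed partial derivative $\partial^2 G_t/\partial u\,\partial v$ at $u=v=1$. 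You instead exploit the pathwise structure: grouping jumps by size via \eqref{SkellamGeneralizzatoInTerminiDiPoisson}, conditioning on the Poisson counts gives \eqref{funzioneGeneratriceDoppiaScomposizione} in two lines with no differential equation, and the Poisson colouring (marking) theorem yields the streams $N_{i,j}$ of rate $\lambda_i q(j;i)$, so that $S_1$ and $S_2$ are honest linear combinations of independent Poisson processes regrouped by target size; this identifies them as generalized Skellam \emph{processes} directly through Theorem \ref{teoremaDefinzioniEquivalentiSkellamGeneralizzato}($ii$), which is a cleaner way to get the process-level claim than the paper's fixed-$t$ generating-function match (the paper implicitly supplements that match with the joint independence of increments). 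Likewise your covariance computation via the bivariate compound Poisson identity $\mathrm{Cov}(A_i,B_i)=\Lambda_i(t)\,\mathbb{E}[U_iV_i]$ replaces differentiation of the joint PGF and makes the positivity of \eqref{covarianzaComponentiScomposizione} transparent. The only step you should flag explicitly is the regrouping of the marks: the $Y_k$ in \eqref{saltiDecomposizioneSkellam} are attached to the time-ordered jumps of the superposed process, so one should note (standard, via the conditional independence of the marks given the jump sizes) that the marks attached to the points of each $N_i$ form an i.i.d.\ $q(\cdot\,;i)$ family, independent across $i$ and of the counting processes, before invoking the colouring theorem; with that remark added, your argument is complete and, in the sign bookkeeping for $\mathcal{I}^-$, agrees exactly with the rates stated in the theorem (reading the $p(j;i)$ there as the typo for $q(j;i)$ it evidently is).
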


Note that in \eqref{decomposizioneSkellamPrimaComponente} and \eqref{decomposizioneSkellamSecondaComponente} we are using the short notation to describe a non-homogeneous generalized Skellam process (see after Definition \ref{definizioneInfinitesimaleSkellamGeneralizzatoNonOmogeneo}), meaning that, for instance $S_1$ permforms a jump of size $ 1\le j\le \max\mathcal{I}$ with rate function $\sum_{\substack{i\ge j\\i\in\mathcal{I}^+}}\lambda_i q\big(j;i\big)$.
We point out that in Theorem \ref{teoremaDecomposizioneSkellam}, if $\mathcal{I}\subset (0,\infty)$, $S_1$ and $S_2$ perform jumps of size in $\{1,\dots,\max\mathcal{I}\}$ and in (\ref{decomposizioneSkellamPrimaComponente}) and (\ref{decomposizioneSkellamSecondaComponente}) the part about negative $j$ does not appear.

Note that the covariance (\ref{covarianzaComponentiScomposizione}) between the two components $S_1,S_2$ is always positive.

\begin{proof}
	By means of definition (\ref{definizioneInfinitesimaleSkellamGeneralizzatoNonOmogeneo}), (\ref{saltiDecomposizioneSkellam}) and the description after the latter formula, we derive the following infinitesimal behavior of the joint distribution of $S_1$ and $S_2$. For the sake of completeness we consider the case where $\mathcal{I}\cap (-\infty,0)\not=\emptyset$, i.e. $S$ can perform negative jumps. For $n_1,n_2\in\mathbb{Z}$ and $t\ge0$, \begin{align}\label{definizioneInfinitesimaleSkellamGeneralizzatoNonOmogeneoScomposizione}
		&P\{S_1(t+\dif t) = n_1+i_1,\,S_2(t+\dif t) = n_2+i_2\,|\,S_1(t) = n_1, S_2(t) = n_2\}  \\
		& = P\{S_1(t+\dif t) = n_1+i_1,\,S_2(t+\dif t) = n_2+i_2,\, S(t+\dif t) = n_1+n_2+i_1+i_2\, \Big | \nonumber\\
		&\ \ \ \Big| \,S_1(t) = n_1,\,S_2(t) = n_2,\, S(t) = n_1+n_2\}\nonumber\\
		& = P\{S(t+\dif t) = n_1+n_2+i_1+i_2 \,|\,S(t) = n_1+n_2\}\label{passaggioDecomposizioneGenerale}\\
		&\ \ \ \times P\{S_1(t+\dif t) = n_1+i_1,\,S_2(t+\dif t) = n_2+i_2\,|\,S_1(t) = n_1,S_2(t) = n_2,S[t,t+\dif t) = i_1+i_2\}\nonumber\\
		&= \begin{cases}
			\begin{array}{l l}
				\lambda_{i_1+i_2}(t)q(i_1;i_1+i_2)\dif t + o(\dif t), & i_1+i_2\in \mathcal{I}^+,\ 0\le i_1,i_2\le \max\mathcal{I}^+,\\
				\lambda_{i_1+i_2}(t)q\big(|i_1|;i_1+i_2\big)\dif t + o(\dif t), & i_1+i_2\in \mathcal{I}^-,\ 0\ge i_1,i_2\ge \max\mathcal{I}^-,\\
				1-\sum_{i\in\mathcal{I}} \lambda_i(t)\dif t + o(\dif t), & i_1=0, i_2=0,\\
				o(\dif t), & \text{otherwise},
			\end{array}
		\end{cases}\nonumber
	\end{align}
	where in step \eqref{passaggioDecomposizioneGenerale} we used that $S(t+\dif t)$, conditionally on $S(t)$ is independent on $S_1(t)$ and $S_2(t)$.
	Note that the expression in the case of $i_1=i_2 = 0$ derives by means of the following computation (similarly applied also for $\mathcal{I}^-$),
	\begin{align*}
		\sum_{\substack{i_1,i_2=0\\i_1+i_2\in\mathcal{I}^+}}^{\max\mathcal{I}^+}\lambda_{i_1+i_2}(t)q(i_1;i_1+i_2) = \sum_{i\in\mathcal{I}^+}\sum_{j=0}^i \lambda_i(t) q(j;i) = \sum_{i\in\mathcal{I}^+}\lambda_i(t).
	\end{align*}
	Now, from (\ref{definizioneInfinitesimaleSkellamGeneralizzatoNonOmogeneoScomposizione}) we derive a difference-differential equation for the joint probability mass function $p_t(m,n) = P\{S_1(t) = m,S_2(t) = n\}$, with $m,n\in\mathbb{Z}$,
	\begin{align}
		\frac{\dif p_t(m,n)}{\dif t} = -\sum_{i\in \mathcal{I} }\lambda_i(t) p_t(m,n) &+ \sum_{i\in \mathcal{I}^+} \lambda_i(t)\sum_{j=0}^i q(j;i)p_t\big(m-j,n-(i-j)\big)\\
		& +  \sum_{i\in \mathcal{I}^-} \lambda_i(t)\sum_{j=i}^0 q\big(|j|;i\big)p_t\big(m-j,n-(i-j)\big). \nonumber
	\end{align}
	Now, by considering the joint probability generating function with suitable $u,v$ in the neighborhood of $0$, observing that, for $h,k\in\mathbb{Z}$, $\sum_{m,n=-\infty}^\infty u^m v^np_t(m-h, n-k) =u^h v^k\sum_{m,n=-\infty}^\infty u^{m-h} $ $v^{n-k} p_t(m-h, n-k) = u^h v^k G_t(u,v)$, we obtain the differential equation governing $G_t(u,v)$,
	\begin{align}
		\frac{\partial G_t(u,v)}{\partial t}  = - \Biggl(\sum_{i\in\mathcal{I}} \lambda_i(t) - \sum_{i\in \mathcal{I}^+} \lambda_i(t)\sum_{j=0}^i q(j;i) u^j v^{i-j} -  \sum_{i\in \mathcal{I}^-} \lambda_i(t)\sum_{j=i}^0 q\big(|j|;i\big) u^j v^{i-j} \Biggr)G_t(u,v).\nonumber
	\end{align}
	Hence, we have
	\begin{align}
		G_t(u,v) &= \text{exp}\Biggl(-\sum_{i\in\mathcal{I}} \Lambda_i(t) + \sum_{i\in \mathcal{I}^+} \Lambda_i(t)\sum_{j=0}^i q(j;i) u^j v^{i-j} + \sum_{i\in \mathcal{I}^-} \Lambda_i(t)\sum_{j=i}^0 q\big(|j|;i\big) u^j v^{i-j} \Biggr)\nonumber\\
		& = \text{exp}\Biggl(- \sum_{i\in\mathcal{I}}\Lambda_i(t)\biggl(1- \mathbb{E}\Bigl[u^{\text{sgn}(X)Y}v^{X-\text{sgn}(X)Y}\big|X=i\Bigr]\biggr)\Biggr),\label{funzioneGeneratriceDoppiaScomposizione}
	\end{align}
	where $X,Y$ are copies of the jump sizes in (\ref{saltiDecomposizioneSkellam}), meaning that the conditional law of $Y$ given $X=i$ is $q(\cdot;i)$.
	
	From (\ref{funzioneGeneratriceDoppiaScomposizione}) we derive the distribution of the marginal processes and the covariance structure.
	By setting $v=1$ we have
	\begin{align}
		\mathbb{E}u^{S_1(t)} &= G_t(u,1) =  \text{exp}\Biggl(- \sum_{i\in\mathcal{I}}\Lambda_i(t)\biggl(1- \mathbb{E}\Bigl[u^{\text{sgn}(X)Y}\big|X=i\Bigr]\biggr)\Biggr)\label{funzioneGeneratriceMarginaleScomposizione}\\
		& =  \text{exp}\Biggl(- \sum_{i\in\mathcal{I}}\Lambda_i(t)\Bigl(1 - q\big(0 ;i\big)\Bigr) + \sum_{j=1}^{\max\mathcal{I}^+} u^j \sum_{\substack{i\ge j\\ i\in\mathcal{I}^+}} \Lambda_i(t)q(j;i) \nonumber \\
		&\ \ \ + \sum_{j=\min\mathcal{I}^-}^{-1} u^j \sum_{\substack{i\le -j\\ i\in\mathcal{I}^-}} \Lambda_i(t)q\big(|j|;i\big) \Biggr),\nonumber
	\end{align}
	which yields the form of the process $S_1$ given in (\ref{decomposizioneSkellamPrimaComponente}). Similarly one obtains (\ref{decomposizioneSkellamSecondaComponente}).
	
	From (\ref{funzioneGeneratriceMarginaleScomposizione}) by deriving once and setting $u=1$, we obtain the moment of $S_1$, in particular, we have the following form in terms of the conditional distribution of the split jump $Y$ (and equivalently for the second component $S_2$),
	\begin{equation}\label{momentiComponentiScomposizione}
		\mathbb{E}S_1(t) = \sum_{i\in\mathcal{I}} \Lambda_i(t)\,\text{sgn}(i) \mathbb{E}\Bigl[Y|X=i\Bigr]\ \ \ \text{ and }\ \ \  \mathbb{E}S_2(t) = \sum_{i\in\mathcal{I}} \Lambda_i(t)\,\text{sgn}(i) \mathbb{E}\Bigl[|X|-Y|X=i\Bigr].
	\end{equation}
	Finally, by means of the joint probability generating function (\ref{funzioneGeneratriceDoppiaScomposizione}) we have that 
	\begin{align*}
		&\mathbb{E}S_1(t)S_2(t) = \frac{\partial G_t(u,v)}{\partial u\partial v}\Bigg|_{u=v=1} \\
		&= \sum_{i\in\mathcal{I}} \Lambda_i(t)\,\text{sgn}(i) \mathbb{E}\Bigl[Y|X=i\Bigr]\sum_{i\in\mathcal{I}} \Lambda_i(t) \,\text{sgn}(i)\mathbb{E}\Bigl[|X|-Y\big|X=i\Bigr] + \sum_{i\in\mathcal{I}} \Lambda_i(t) \mathbb{E}\Bigl[Y\bigl(|X|-Y\big)\big|X=i\Bigr],
	\end{align*}
	and by means of (\ref{momentiComponentiScomposizione}) we obtain the covariance (\ref{covarianzaComponentiScomposizione}).
\end{proof}

\begin{example}[Binomial decomposition]
	Let us assume a binomial split of the jumps, i.e. the variables $Y_k$ in (\ref{saltiDecomposizioneSkellam}) are such that $q(j;i) =  P\{Y_k = j \,|\,X_k = i\} = \binom{|i|}{j}p^j(1-p)^{|i|-j},\ i\in\mathcal{I}, \ j=0,\dots, |i|, \ k\in\mathbb{N}$. Then, the joint probability generating function (\ref{funzioneGeneratriceDoppiaScomposizione}) has the following interesting formula
	\begin{equation*}
		G_t(u,v) = \text{exp}\Biggl(- \sum_{i\in\mathcal{I}}\Lambda_i(t)\biggl(1- \Bigl[pu^{\text{sgn}(i)}+(1-p)v^{\text{sgn}(i)}\Bigr]^{|i|}\,\biggr)\Biggr),
	\end{equation*}
	from which we easily derive the marginal ones. The covariance (\ref{covarianzaComponentiScomposizione}) reads $ \text{Cov}\Big(S_1(t), S_2(t)\Big) = \sum_{i\in\mathcal{I}} \Lambda_i(t) |i|p\big(|i|-1\big) .$
	\hfill$\diamond$
\end{example}

The interested reader can notice that the decomposition discussed in Theorem \ref{teoremaDecomposizioneSkellam} can be extended to the case of $H$ subprocesses, by assuming a splitting rule based on $H$ components instead of two. This can be formalized by suitably adapting the jumps definition in (\ref{saltiDecomposizioneSkellam}); for instance by assuming $Y_k^{(1)},\dots,Y_k^{(H)}$ such that $Y_k^{(1)}+\dots+Y_k^{(H)} = |X_k|$.
\\

As a particular case of Theorem \ref{teoremaDecomposizioneSkellam} we obtain the typical Bernoulli-type decomposition, i.e. where each jump of size $i\in\mathcal{I}$ is assigned to $S_1$ with probability $p_i\in(0,1)$ and to $S_2$ with probability $1-p_i$. In this case, given the $k$-th jump $X_k=i$, the random contribution to the first subprocess, the random split $Y_k$ in \eqref{saltiDecomposizioneSkellam}, is $Y_k =|i|$ with probability $p_i\in (0,1)$ and $Y_k = 0$ with probability $1-p_i$, i.e. the probabilities in \eqref{decomposizioneSkellamPrimaComponente} and \eqref{decomposizioneSkellamSecondaComponente} reduces to $q(|i|;i) = p_i$ and $q(0;i) = 1-p_i, i\in\mathcal{I}$.

\begin{corollary}[Bernoulli decomposition]\label{proposizioneCoppiaSkellamScompostoBernoulli}
Let $S\sim NHGSP (\lambda_i, i\in \mathcal{I} )$ and $S_1$ and $S_2$ be obtained as described above. Then, $S_1$ and $S_2$ are independent processes such that
\begin{equation}\label{tesiCoppiaSkellamScompostoBernoulli}
S_1\sim NHGSP \Bigl(p_i\lambda_i, i\in \mathcal{I} \Bigr)\ \ \ \text{and}\ \ \ S_2\sim NHGSP\Bigl((1-p_i)\lambda_i, i\in \mathcal{I} \Bigr).
\end{equation}
\end{corollary}

\begin{proof}
It sufficies to observe that from \eqref{funzioneGeneratriceDoppiaScomposizione} we obtain the following joint probability generating function, for suitable $u,v$ in a neighborhood of $0$,
\begin{align}
G_{S_1(t),S_2(t)}(u,v)& = e^{-\sum_{i\in\mathcal{I}} p_i\Lambda_i(t)(1-u^i) }e^{-\sum_{i\in\mathcal{I}} (1-p_i)\Lambda_i(t)(1-v^i) }. \label{funzioneGeneratriceCoppiaSkellamScompostoBernoulli}
\end{align}
By keeping in mind (\ref{funzioneGeneratriceSkellamGeneralizzatoNonOmogeneo}), formula \eqref{funzioneGeneratriceCoppiaSkellamScompostoBernoulli} yields the claimed result.
\end{proof}

We point out that one could also derive Corollary \ref{proposizioneCoppiaSkellamScompostoBernoulli} by means of representation ($ii$) of Theorem \ref{teoremaDefinzioniEquivalentiSkellamGeneralizzato} and by using known results for the Poisson process.

Note that to extend Corollary \ref{proposizioneCoppiaSkellamScompostoBernoulli} in order to decompose a generalized Skellam process into $H$ independent generalized Skellam processes, it sufficies to consider splits governed by multinomial random variables which select the subprocess recording the jump. Assuming that a jump of size $i$ is assigned to the $h$-th subprocess with probability $p_i^{(h)},\ \sum_{h=1}^H p_i^{(h)} = 1\ \forall\ i$, then $S$ will be decomposed into the independent processes $S_h\sim NHGSP \Bigl(p_h\lambda_i, i\in \mathcal{I} \Bigr)$ for $h=1,\dots, H$.

\subsection{First passage times}\label{tempiPrimoPassaggioSkellamGeneralizzato}

Here we discuss the case of a non-decreasing Skellam process with natural jump size, that is the case when $\mathcal{I}\subset \mathbb{N}$; in this case it would be more precise to talk about a generalized Poisson process (counting process).

Let $T_n = \inf\{t\ge0\,:\,S(t)\ge n\}$, with $n\in\mathbb{N}$. We note that the process reaches at least the state $1$with its first step, therefore $T_1$ coincides with the arrival time for a non-homogeneous Poisson process with rate function $\sum_{i\in\mathcal{I}}\lambda_i$; in the case of homogeneous process this reduces to an exponential random variable. 

Now, we derive the generating function for the probabilities of the kind $q_n(t) = P\{T_n > t\}$, with $t\ge0$. First, we recall the following general relationship (which is holding for all non-decreasing counting processes over $\mathbb{N}$), for suitable $|u|< 1$ (see Appendix \ref{appendiceDimostrazioneFunzioneGeneratriceTempiPrimoPassaggio} for the details),
\begin{equation}\label{generatriceTempiPrimoPassaggioRipartizione}
	\sum_{n=1}^\infty u^n P\{T_n \le t \} = \frac{u}{u-1}\Bigl(G_t(u) - 1\Bigr).
\end{equation}
Now, by means of simple calculation and \eqref{funzioneGeneratriceMomentiSkellamGeneralizzatoNonOmogeneo} yields
\begin{equation}\label{generatriceTempiPrimoPassaggio}
Q_t(u) = \sum_{n=1}^\infty u^n P\{T_n > t \} = \frac{u}{1-u}G_t(u)= \frac{u}{1-u} e^{-\sum_{i\in\mathcal{I}} \Lambda_i(t) (1-u^i)}.
\end{equation}
By keeping in mind equation (\ref{generatriceTempiPrimoPassaggio}) one can prove that the survival distribution functions $q_n(t)$ satisfy a difference-differential equation equivalent to (\ref{equazioneDifferenzeDifferenzialeSkellamGeneralizzato}) ,
\begin{equation*}
\frac{\dif }{\dif t}q_n(t) = -\sum_{i\in\mathcal{I}} \lambda_i(t) q_n(t) + \sum_{i\in\mathcal{I}} \lambda_i(t) q_{n-i}(t),\ \ \ t\ge0,\ n\in\mathbb{N},
\end{equation*}
with initial condition $q_n(0) = P\{T_n>0\} = 1, \ n\ge1$ and by assuming $q_n(t)=0,\ t\ge0,\ n\le0$.

Finally, from (\ref{generatriceTempiPrimoPassaggioRipartizione}) we can derive the generating function for the moments of order $r>0$ of the random times $T_n$,
\begin{equation}\label{funzioneGeneratriceMomentiTempoPrimoPassaggio}
\sum_{n=1}^\infty u^n \mathbb{E} T^r_n = \frac{u\,r}{1-u}\int_0^\infty t^{r-1}G_t(u)\dif t.
\end{equation}
Note that one can directly obtain the last term of \eqref{funzioneGeneratriceMomentiTempoPrimoPassaggio} by using formula (\ref{generatriceTempiPrimoPassaggio}). In the case of constant rates, formula (\ref{funzioneGeneratriceMomentiTempoPrimoPassaggio}) permits us to obtain that
$$ \sum_{n=1}^\infty u^n \mathbb{E} T^r_n  = \frac{u\,\Gamma(r+1)}{1-u}\Biggl(\sum_{i\in\mathcal{I}} \lambda_i(1-u^i)\Biggr)^{-r}.$$

\subsection{Limit results}\label{sezioneLimiti}

We here show some limit results of the type of the law of large numbers (or the ergodic theorem) and the central limit theorem. After considering the limit as the time goes to $\infty$ we consider the case of rate functions exploding to infinite to derive the convergence of a Skellam process to a Gaussian process. Again, we assume $\mathcal{I}\subset \mathbb{R}$.

\begin{theorem}\label{teoremaConvergenzaSkellamGeneralizzato}
Let $S\sim NHGSP (\lambda_i,i\in\mathcal{I})$. Let $f\in C^1\Bigl([0,\infty), [0,\infty)\Bigr)$ be a non-decreasing function such that $f(t)\longrightarrow\infty$ as $t\longrightarrow\infty$. 
\begin{itemize}
\item[($i$)] Let $\Lambda_i(t) /f(t)\longrightarrow \mu_i\ge0$, as $t\longrightarrow \infty$, $ i\in\mathcal{I}$. Then,
\begin{equation}\label{convergenzaMediaLeggeGrandiNumeri}
\frac{S(t)}{f(t)}\stackrel{p,\, L^1}{ \substack{\xrightarrow{\hspace*{1.7cm}}\\t\longrightarrow \infty }} \sum_{i\in\mathcal{I}}i \mu_i.
\end{equation}
%If, in addition, %$f$ is non-decreasing and 
%$\sum_{k=1}^\infty \Lambda_i(k,k+1)/f(k)^2<\infty,\ \forall\ i$, then the convergence is $a.s.$

\item[($ii$)] Let $\Lambda_i(t) = \mu_i(t) +\sigma^2_i (t) $ such that 
\begin{equation}\label{ipotesiConvergenzaNormaleMuSigma}
\frac{\mu_i(t)}{\sqrt{f(t)}}\substack{\xrightarrow{\hspace*{1.5cm}}\\t\longrightarrow \infty }\mu_i\in\mathbb{R},\ \ \frac{\sigma^2_i(t)}{f(t)}\substack{\xrightarrow{\hspace*{1.5cm}}\\t\longrightarrow \infty }\sigma^2_i\ge0,\ \forall\ i, \ \text{ and }\ \ \sum_{i\in\mathcal{I}} i \sigma^2_i = 0.
\end{equation}
%\begin{equation}\label{ipotesiConvergenzaTeoremaLimiteCentrale}
%\frac{\mu_i(t)}{\sqrt{f(t)}}\longrightarrow \mu_i\in\mathbb{R}, \ \ \frac{\sigma_i(t)}{f(t)}\longrightarrow \sigma_i>0,\ \text{ with }\  \sum_{i\in\mathcal{I}} i \sigma_i = 0,
%\end{equation}
%where the limits are taken as $t\longrightarrow \infty$ . 
Then,
\begin{equation}
\frac{S(t)}{\sqrt{f(t)}}\stackrel{d}{ \substack{\xrightarrow{\hspace*{1.7cm}}\\t\longrightarrow \infty }}Z\sim\mathcal{N}\Biggl( \sum_{i\in\mathcal{I}}i \mu_i, \sum_{i\in\mathcal{I}} i^2 \sigma^2_i\Biggr).
\end{equation}
\end{itemize}
\end{theorem}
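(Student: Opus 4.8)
The plan is to rest both parts on the Poisson decomposition $S=\sum_{i\in\mathcal{I}} iN_i$ of (\ref{SkellamGeneralizzatoInTerminiDiPoisson}) together with the explicit formulas for the moments (\ref{momentiPrimoSecondoSkellamGeneralizzatoNonOmogeneo}) and the moment generating function (\ref{funzioneGeneratriceMomentiSkellamGeneralizzatoNonOmogeneo}); since $\mathcal{I}$ is finite, every sum over $i$ is finite and all interchanges of limits and sums are immediate. For part (\textit{i}) I would first obtain the $L^2$ statement, which delivers both the $L^1$ and the in-probability convergence at once. Writing $R(t)=S(t)/f(t)-\sum_{i\in\mathcal{I}} i\mu_i$, I would compute $\mathbb{E}R(t)^2=\mathbb{V}[S(t)/f(t)]+\big(\mathbb{E}[S(t)/f(t)]-\sum_i i\mu_i\big)^2$. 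By (\ref{momentiPrimoSecondoSkellamGeneralizzatoNonOmogeneo}), $\mathbb{E}[S(t)/f(t)]=\sum_i i\,\Lambda_i(t)/f(t)\to\sum_i i\mu_i$, so the bias term vanishes, while $\mathbb{V}[S(t)/f(t)]=\sum_i i^2\Lambda_i(t)/f(t)^2=\sum_i i^2\big(\Lambda_i(t)/f(t)\big)\big(1/f(t)\big)\to0$ because $f(t)\to\infty$. Hence $\mathbb{E}R(t)^2\to0$.

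For the almost sure refinement I would first work on the integer skeleton. Fixing $i$ and setting $D_j^{(i)}=\big[N_i(j)-N_i(j-1)\big]-\Lambda_i(j-1,j)$, these are independent, centered, with variance $\Lambda_i(j-1,j)$, and the hypothesis $\sum_k\Lambda_i(k,k+1)/f(k)^2<\infty$ gives $\sum_j\mathbb{V}\big(D_j^{(i)}/f(j)\big)<\infty$. Kolmogorov's convergence theorem then makes $\sum_j D_j^{(i)}/f(j)$ converge a.s., and Kronecker's lemma (applicable since $f(k)\uparrow\infty$) yields $\big(N_i(k)-\Lambda_i(k)\big)/f(k)\to0$ a.s.; combined with $\Lambda_i(k)/f(k)\to\mu_i$ this gives $S(k)/f(k)\to\sum_i i\mu_i$ a.s.\ along integers. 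To pass to continuous time I would use the monotonicity of each $N_i$ to sandwich $N_i(t)$ between $N_i(\lfloor t\rfloor)$ and $N_i(\lfloor t\rfloor+1)$, reducing the matter to controlling the oscillation $\big[N_i(k+1)-N_i(k)\big]/f(k)$, whose stochastic part is handled a.s.\ by a Borel--Cantelli estimate furnished again by the summability hypothesis.

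For part (\textit{ii}) I would pass through the moment generating function (\ref{funzioneGeneratriceMomentiSkellamGeneralizzatoNonOmogeneo}), which is finite for every real argument. Evaluating at $\theta/\sqrt{f(t)}$ and taking logarithms,
\[
\log\mathbb{E}\exp\!\Big(\theta\,\tfrac{S(t)}{\sqrt{f(t)}}\Big)=\sum_{i\in\mathcal{I}}\Lambda_i(t)\big(e^{\,i\theta/\sqrt{f(t)}}-1\big).
\]
Expanding $e^{\,i\theta/\sqrt{f(t)}}-1=i\theta/\sqrt{f(t)}+i^2\theta^2/(2f(t))+O\big(f(t)^{-3/2}\big)$ and substituting $\Lambda_i(t)=\mu_i(t)+\sigma^2_i(t)$, the first-order part yields $\theta\sum_i i\,\mu_i(t)/\sqrt{f(t)}\to\theta\sum_i i\mu_i$ together with the term $\theta\sqrt{f(t)}\sum_i i\,\sigma^2_i(t)/f(t)$, while the second-order part yields $\tfrac{\theta^2}{2}\sum_i i^2\big(\mu_i(t)+\sigma^2_i(t)\big)/f(t)\to\tfrac{\theta^2}{2}\sum_i i^2\sigma^2_i$ (the $\mu_i$-contribution disappearing as it carries an extra $1/\sqrt{f(t)}$), and the cubic and higher remainders are $O\big(f(t)^{-1/2}\big)\to0$. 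Thus the log-MGF tends to $\theta\sum_i i\mu_i+\tfrac{\theta^2}{2}\sum_i i^2\sigma^2_i$, the cumulant generating function of $\mathcal{N}\big(\sum_i i\mu_i,\sum_i i^2\sigma^2_i\big)$, and Curtiss's theorem (convergence of MGFs on a neighborhood of $0$) delivers the weak convergence.

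The main obstacle is the potentially divergent first-order term $\theta\sqrt{f(t)}\sum_i i\,\sigma^2_i(t)/f(t)$: since $\sum_i i\,\sigma^2_i(t)/f(t)\to\sum_i i\sigma^2_i$, this is an $\infty\cdot0$ indeterminate form, and it is exactly the condition $\sum_i i\sigma^2_i=0$ in (\ref{ipotesiConvergenzaNormaleMuSigma}) that is designed to annihilate it; making this rigorous—rather than merely observing that the bracket tends to $0$—is the delicate point, and it is here that the precise form of the decomposition $\Lambda_i=\mu_i(t)+\sigma^2_i(t)$ must be exploited. In part (\textit{i}) the analogous subtlety lies in the passage from the integer skeleton to all $t$, where the deterministic increment $\Lambda_i(k,k+1)/f(k)$ must be shown to vanish and the stochastic fluctuation must be absorbed by the increment-summability hypothesis.
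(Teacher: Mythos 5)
Your proposal follows the paper's architecture in most respects: everything is reduced to the component Poisson processes via (\ref{SkellamGeneralizzatoInTerminiDiPoisson}); the almost sure statement is obtained on the integer skeleton from Kolmogorov's convergence criterion plus Kronecker's lemma and then extended to real $t$; and part (\textit{ii}) is the same expansion of the exponent of (\ref{funzioneGeneratriceMomentiSkellamGeneralizzatoNonOmogeneo}) evaluated at $\theta/\sqrt{f(t)}$. The one genuinely different ingredient is your treatment of the probability and $L^1$ convergences in (\textit{i}): the paper proves convergence in probability by passing to the limit in the moment generating function of $N_i(t)/f(t)$, and proves $L^1$ convergence separately via the bound $\mathbb{E}\big|N_i(t)/\Lambda_i(t)-1\big|\le 2$ and dominated convergence, whereas your single second-moment computation, with bias and variance read off from (\ref{momentiPrimoSecondoSkellamGeneralizzatoNonOmogeneo}), yields both modes at once and in fact the stronger $L^2$ convergence. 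That piece of your proposal is complete, correct, and simpler than the paper's.

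The two ``delicate points'' you flag are genuine gaps, and you were right not to treat them as routine; neither is actually closed in the paper's own proof. For part (\textit{ii}) the situation is worse than delicate: with the hypothesis read literally (a condition on the limits $\sigma_i^2$ only), the statement is false, so no argument can rescue the step. Take $\mathcal{I}=\{1,-1\}$, $f(t)=t$, $\mu_{\pm1}(t)\equiv0$, $\sigma_1^2(t)=t+\sqrt{t}$, $\sigma_{-1}^2(t)=t$: all conditions in (\ref{ipotesiConvergenzaNormaleMuSigma}) hold with $\mu_{\pm1}=0$ and $\sigma_{\pm1}^2=1$, but a direct computation of the moment generating function shows that $S(t)/\sqrt{t}$ converges in distribution to $\mathcal{N}(1,2)$, not to $\mathcal{N}(0,2)$: the term $\theta\sqrt{f(t)}\sum_i i\sigma_i^2(t)/f(t)$ you worried about converges to $\theta$, not to $0$. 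The hypothesis must therefore be read in pre-limit form, $\sum_{i\in\mathcal{I}}i\sigma_i^2(t)=0$ for all $t$ (which is how the analogous condition is written in Corollary \ref{corollarioConvergenzaSkellamProcessoGaussiano}), or strengthened to $\sum_{i\in\mathcal{I}} i\sigma_i^2(t)=o\big(\sqrt{f(t)}\big)$; under either reading the offending linear term vanishes identically (or asymptotically) and your expansion --- which is the paper's --- goes through.

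For the almost sure part of (\textit{i}), the requirement you isolate, namely that the deterministic increment $\Lambda_i(k,k+1)/f(k)$ vanish, does not follow from the hypotheses: with $\lambda_i(t)=e^t$ and $f(t)=e^t$ one has $\Lambda_i(t)/f(t)\to1$ and $\sum_k\Lambda_i(k,k+1)/f(k)^2<\infty$, yet $\Lambda_i(k,k+1)/f(k)=e-1$ for every $k$. So your sandwich argument does not close --- and neither does the paper's nearly identical decomposition, whose claim that $N_i(\lfloor t\rfloor,t)/f(t)\to0$ invokes the a.s.\ convergence of the \emph{uncentered} series $\sum_k N_i(k,k+1)/f(k)$, while Kolmogorov's criterion only gives convergence of the centered one. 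The clean repair is to abandon the skeleton altogether: write $N_i(t)=\tilde{N}_i\big(\Lambda_i(t)\big)$ with $\tilde{N}_i$ a rate-one homogeneous Poisson process. If $\Lambda_i(t)\to\infty$, the strong law $\tilde{N}_i(s)/s\to1$ a.s.\ gives
\begin{equation*}
\frac{N_i(t)}{f(t)}=\frac{\tilde{N}_i\big(\Lambda_i(t)\big)}{\Lambda_i(t)}\cdot\frac{\Lambda_i(t)}{f(t)}\longrightarrow 1\cdot\mu_i \ \ \text{a.s.},
\end{equation*}
while if $\sup_t\Lambda_i(t)<\infty$ then $N_i$ is a.s.\ bounded and necessarily $\mu_i=0$. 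This proves the almost sure convergence for every $f$ as in the statement --- indeed without the summability hypothesis at all.
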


Note that the last hypothesis in (\ref{ipotesiConvergenzaNormaleMuSigma}) implies that the process $S$ performs both positive and negative jumps. If $\sigma^2_i = 0\ \forall\ i$ then point ($ii$) reduces to ($i$).

\begin{proof}
($i$) Convergence in probability: it is sufficient to consider the representation (\ref{SkellamGeneralizzatoInTerminiDiPoisson}) and that under the given assumptions, it is known that for each $i\in\mathcal{I}$,  $N_i(t)/f(t)\stackrel{p}{\longrightarrow} \mu_i$ (it can be easily derived by studying the limit of the moment generating function $\mathbb{E}\text{exp}\bigl(-\gamma N_i(t)/f(t) \bigr) =\text{exp}\Bigl(- \Lambda_i(t)\bigl(1-e^{\gamma/f(t)}\bigr) \Bigr) $).

Convergence in mean: if $\mu_i = 0$, then $\mathbb{E}N_i(t)/f(t) \longrightarrow 0$ and the result is obvious. If $\mu_i>0$, then $\Lambda_i(t)\longrightarrow\infty$ as $t\rightarrow\infty$. Now, for $t\ge0$ there exists $n_t>0$ such that $n_t <\Lambda_i(t) \le n_t+1$ and 
\begin{align*}
	\mathbb{E}\Bigg|\frac{N_i(t)}{\Lambda_i(t)}- 1\Bigg| &= \sum_{n=0}^\infty \Big|\frac{n}{\Lambda_i(t)}- 1\Big| e^{-\Lambda_i(t)}\frac{\Lambda_i(t)^n}{n!}\\
	& =  e^{-\Lambda_i(t)}\left(\sum_{n=0}^{n_t} \Big(1-\frac{n}{\Lambda_i(t)}\Big)\frac{\Lambda_i(t)^n}{n!} + \sum_{n=n_t+1}^\infty \Big(\frac{n}{\Lambda_i(t)}-1\Big)\frac{\Lambda_i(t)^n}{n!}\right)\\
	& = 2 e^{-\Lambda_i(t)}\frac{\Lambda_i(t)^{n_t}}{n_t!} \substack{\xrightarrow{\hspace*{1.5cm}}\\t\longrightarrow \infty } 0
\end{align*}
%$$\mathbb{E}\Bigg|\frac{N_i(t)}{\Lambda_i(t)} - 1\Bigg|=\mathbb{E}\Bigg|\frac{N_i(t)}{\mathbb{E}N_i(t)} - 1\Bigg|\le2$$ 
%and thus, by means of the dominated convergence theorem 
%$$\lim_{t\rightarrow\infty}\mathbb{E}\Bigg|\frac{N_i(t)}{\Lambda_i(t)}- 1\Bigg| = \sum_{n=0}^\infty \lim_{t\rightarrow\infty} \Big|\frac{n}{\Lambda_i(t)}- 1\Big| e^{-\Lambda_i(t)}\frac{\Lambda_i(t)^n}{n!} = 0.$$
where the limit follows by keeping in mind that $n_t\approx \Lambda_i(t)$ (and thus also $n_t\longrightarrow\infty $ as $t\rightarrow\infty$) and Stirling forumla $n! \sim \sqrt{2\pi n}(n/e)^n$. Hence, $N_i(t)/f(t)$ converges in mean to $\mu_i$ for each $i\in\mathcal{I}$ and this, by keeping in mind the definition (\ref{SkellamGeneralizzatoInTerminiDiPoisson}), yields the $L^1$-convergence in (\ref{convergenzaMediaLeggeGrandiNumeri}).
\\
%Almost sure convergence: again, it is sufficient to prove that $N_i(t)/f(t)\stackrel{a.s}{\longrightarrow}	\mu_i,\ \forall\ i$.
%First, we rewrite that $N_i(n)/f(n) = \sum_{k=0}^{n-1} N_i(k, k+1)/ f(n)$ for $n\in\mathbb{N}$, and we consider the modified process $\sum_{k=0}^\infty \Bigl( N_i(k, k+1)-\mathbb{E}N_i(k,k+1)\Bigr)/ f(k)$. This converges almost surely thanks to the Kolmogorov's convergence criterion (see Theorem 6.5.2 of \cite{G2005}) since $\sum_{k=0}^\infty \text{Var}\Bigl(  N_i(k, k+1)/ f(k) \Bigr) = \sum_{k=0}^\infty \Lambda_i(k,k+1)/f(k)^2<\infty$ by hypothesis. Finally, Kronecker Lemma (see Lemma 6.5.1 of \cite{G2005}) implies that 
%\begin{equation*}
%\frac{1}{f(n)}\sum_{k=0}^{n-1} \Bigl( N_i(k, k+1)-\mathbb{E}N_i(k,k+1)\Bigr)\stackrel{a.s.}{ \substack{\xrightarrow{\hspace*{1.7cm}}\\n\longrightarrow\infty}}0.
%\end{equation*}
%Finally, by observing that $\sum_{k=0}^{n-1} \mathbb{E}N_i(k,k+1)/f(n) = \Lambda_i(n)/f(n)\longrightarrow \mu_i$ we obtain $N_i(n)/f(n)\stackrel{a.s}{\longrightarrow}	\mu_i$.
%To conclude the proof of ($i$) we extend the result to $t\ge0$:
%\begin{align*}
%\frac{N_i(t)}{f(t)} = \sum_{k=0}^{\floor{t}-1} \frac{N_i(k,k+1)}{f(\floor{t})}\frac{f(\floor{t})}{f(t)} + \frac{N_i(\floor{t},t)}{f(t)}\stackrel{a.s.}{ \substack{\xrightarrow{\hspace*{1.7cm}}\\t\longrightarrow\infty}}\mu_i  + 0  =\mu_i,
%\end{align*}
%where $N_i(\floor{t},t)/f(t)\stackrel{a.s}{\longrightarrow}	0$ since $\sum_{k=1}^n N_i(k, k+1)/ f(k)$ converges $a.s.$
%\\

($ii$) We study the limit of the moment generating function, which, in light of (\ref{funzioneGeneratriceMomentiSkellamGeneralizzatoNonOmogeneo}) and the hypotheses (\ref{ipotesiConvergenzaNormaleMuSigma}), is, for suitable real $\gamma$,
$$\mathbb{E}e^{\gamma S(t)/\sqrt{f(t)}} = \text{exp}\Biggl(- \sum_{i\in\mathcal{I}}\Bigl(\mu_i(t) +\sigma^2_i(t)\Bigr)\Bigl(1-e^{i \gamma/\sqrt{f(t)}}\Bigr) \Biggr). $$ 

We now compute the limit of the exponent above.
\begin{align*}
- \lim_{t\rightarrow \infty} &\sum_{i\in\mathcal{I}}\Bigl(\mu_i(t) +\sigma^2_i (t)\Bigr)\Bigl(1-e^{i \gamma/\sqrt{f(t)}}\Bigr)\\
&= \lim_{t\rightarrow \infty}  \sum_{i\in\mathcal{I}}\Bigl(\mu_i(t) +\sigma^2_i(t)\Bigr)\sum_{k=1}^\infty \Biggl( \frac{i \gamma}{\sqrt{f(t)}}\Biggr)^k \frac{1}{k!} \\
& = \lim_{t\rightarrow \infty} \sum_{i\in\mathcal{I}} \Biggl( i\gamma \frac{\mu_i(t)}{\sqrt{f(t)}} + \sum_{k=2}^\infty \frac{(i \gamma)^k}{k!}\frac{\mu_i(t)}{\sqrt{f(t)^k}} + i\gamma \frac{\sigma^2_i(t)}{\sqrt{f(t)}} +  \frac{i^2\gamma^2}{2}\frac{\sigma^2_i(t)}{f(t)} +  \sum_{k=3}^\infty \frac{(i \gamma)^k}{k!}\frac{\sigma_i^2(t)}{\sqrt{f(t)^k}} \Biggr)\\
& =\gamma \sum_{i\in\mathcal{I}}i \mu_i +  \frac{\gamma^2}{2}\sum_{i\in\mathcal{I}}i^2 \sigma^2_i, %\label{passaggioLimiteSommaVarianze0}
\end{align*}
where in the last step we used the hypotheses (\ref{ipotesiConvergenzaNormaleMuSigma}) (after the interchange of the limit and the series).
\end{proof}

In view of Theorem \ref{teoremaConvergenzaSkellamGeneralizzato} we can also derive the following convergence results inspired by the hydrodynamic limit (also known as Kac's limit). This permits us to obtain also the weak convergence of the whole process.

\begin{corollary}\label{corollarioConvergenzaSkellamProcessoGaussiano}
Let $\alpha\ge1$ and $S_\alpha\sim NHGSP \Bigl(\lambda_i(\cdot, \alpha),i\in\mathcal{I}\Bigr)$.
\begin{itemize}
\item[($i$)] Let $\Lambda_i(t;\alpha)$ such that $\Lambda_i(t;\alpha) /\alpha\longrightarrow \mu_i(t)\ge0,$ as $\alpha\longrightarrow \infty, \ \forall\ t\ge0, \ i\in\mathcal{I}$. Then,
\begin{equation}
\frac{S_\alpha(t)}{\alpha}\stackrel{p,\,L^1}{ \substack{\xrightarrow{\hspace*{1.7cm}}\\\alpha\longrightarrow\infty}} \sum_{i\in\mathcal{I}}i \mu_i(t), \ \ \ t\ge0.
\end{equation}

\item[($ii$)] Let $\Lambda_i(t;\alpha) = \int_0^t \lambda_i^{\mu}(s;\alpha)\dif s  +\int_0^t\lambda_i^{\sigma}(s;\alpha)\dif s = \mu_i(t;\alpha) +\sigma^2_i(t;\alpha)$ with suitable real functions $ \lambda_i^{\mu}$ and non-negative functions $\lambda_i^{\sigma}$ such that for $t\ge0$,
\begin{equation*}
\frac{\mu_i(t;\alpha)}{\sqrt{\alpha}}\substack{\xrightarrow{\hspace*{1.4cm}}\\\alpha\longrightarrow\infty} \mu_i(t)\in\mathbb{R}, \ \ \frac{\sigma_i^2(t;\alpha)}{\alpha}\substack{\xrightarrow{\hspace*{1.4cm}}\\\alpha\longrightarrow\infty} \sigma_i^2(t)\ge0,\ \text{ with }\  \sum_{i\in\mathcal{I}} i \sigma^2_i (t;\alpha) = 0.
\end{equation*}
Then, for $n\in\mathbb{N}$ and $0\le t_1<\dots< t_n$,
\begin{equation}\label{convergenzaFinitoDimensionaliSkellamLimiteKac}
\Biggl(\frac{S_\alpha(t_1)}{\sqrt{\alpha}}, \dots, \frac{S_\alpha(t_n)}{\sqrt{\alpha}}\Biggr)\stackrel{d}{ \substack{\xrightarrow{\hspace*{1.7cm}}\\\alpha\longrightarrow\infty}} \Bigl(Z(t_1),\dots,Z(t_n)\Bigr)
\end{equation}
where $Z$ is a Gaussian process with independent increments and such that $Z(t)\sim\mathcal{N}\biggl( \sum_{i\in\mathcal{I}}i \mu_i(t), \sum_{i\in\mathcal{I}} i^2 \sigma_i^2(t)\biggr), \ t\ge0$.
\\
If, in addition, $\sum_i i \lambda^\mu_i(t;\alpha) =0, \forall \ t,\alpha,$ and for each $i\in\mathcal{I}$,
\begin{equation}\label{ipotesiConvergenzaDeboleAProcessoGaussiano}
\exists \ \ M_i>0\ \ s.t.\ \ \lambda_i^\mu(t;\alpha)\le\sqrt{\alpha}M_i, \ \ \lambda_i^\sigma(t;\alpha)\le\alpha M_i,
\end{equation}
then $S_\alpha/\sqrt{\alpha} \implies Z$ on $\mathcal{D}[0,\infty)$ (i.e. $S_\alpha/\sqrt{\alpha}$ converges weakly to $Z$ as $\alpha\rightarrow \infty$).
\end{itemize}
\end{corollary}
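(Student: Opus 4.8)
The plan is to transfer the one-parameter limit theorems of Theorem~\ref{teoremaConvergenzaSkellamGeneralizzato} to the hydrodynamic scaling, treating the parameter $\alpha$ exactly as the diverging quantity $f(t)$ did there, and then to promote the resulting one-time statements to finite-dimensional and functional convergence by exploiting the independence of the increments. For part~($i$) I would fix $t\ge0$ and use the representation (\ref{SkellamGeneralizzatoInTerminiDiPoisson}): writing $S_\alpha(t)=\sum_{i\in\mathcal{I}} i N_i^{(\alpha)}(t)$ with $N_i^{(\alpha)}(t)$ Poisson of mean $\Lambda_i(t;\alpha)$, it suffices (because $\mathcal{I}$ is finite) to show $N_i^{(\alpha)}(t)/\alpha\to\mu_i(t)$ in probability and in $L^1$ for each $i$. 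Both claims follow verbatim from the proof of Theorem~\ref{teoremaConvergenzaSkellamGeneralizzato}($i$): convergence in probability from the limit of the moment generating function $\exp\bigl(\Lambda_i(t;\alpha)(e^{s/\alpha}-1)\bigr)\to e^{s\mu_i(t)}$, and $L^1$ convergence from the uniform bound $\mathbb{E}\bigl|N_i^{(\alpha)}(t)/\mathbb{E}N_i^{(\alpha)}(t)-1\bigr|\le2$ together with dominated convergence when $\mu_i(t)>0$ (the case $\mu_i(t)=0$ being immediate).

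For the finite-dimensional part of ($ii$) I would first note that, at a single time, the hypotheses are exactly those of Theorem~\ref{teoremaConvergenzaSkellamGeneralizzato}($ii$) with $f\equiv\alpha$, so $S_\alpha(t)/\sqrt{\alpha}\stackrel{d}{\to}\mathcal{N}\bigl(\sum_{i}i\mu_i(t),\sum_{i}i^2\sigma_i^2(t)\bigr)$. To obtain the joint convergence (\ref{convergenzaFinitoDimensionaliSkellamLimiteKac}) I would use that $S_\alpha$ has independent increments and that, by Remark~\ref{remarkIncrementiSkellamGeneralizzato}, each increment $S_\alpha(t_j)-S_\alpha(t_{j-1})$ is again a generalized Skellam process with rate integrals $\Lambda_i(t_j;\alpha)-\Lambda_i(t_{j-1};\alpha)$; the scaling hypotheses and the identity $\sum_{i}i\sigma_i^2(\cdot;\alpha)=0$ pass to these differences, so Theorem~\ref{teoremaConvergenzaSkellamGeneralizzato}($ii$) applies to every increment. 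Writing the joint moment generating function of $\bigl(S_\alpha(t_1),\dots,S_\alpha(t_n)\bigr)/\sqrt{\alpha}$ as a product over the independent increments (by Abel summation of the arguments) and passing each factor to its Gaussian limit then identifies the limit as a Gaussian vector with independent increments and the stated marginals, that is, as $\bigl(Z(t_1),\dots,Z(t_n)\bigr)$.

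The main obstacle is upgrading this to weak convergence in the Skorokhod space, which needs tightness and is where the extra hypotheses (\ref{ipotesiConvergenzaDeboleAProcessoGaussiano}) enter. I would first observe that under them the process is centered: $\sum_{i}i\lambda_i^\mu(\cdot;\alpha)=0$ forces $\sum_{i}i\mu_i(\cdot;\alpha)=0$ and, with $\sum_{i}i\sigma_i^2(\cdot;\alpha)=0$, gives $\mathbb{E}S_\alpha(t)=0$, so all increments are centered. Next, the bounds $\lambda_i^\mu\le\sqrt{\alpha}M_i$ and $\lambda_i^\sigma\le\alpha M_i$ control the increment variance uniformly in $\alpha$: for $s\le t$ and $\alpha\ge1$,
\[
\mathbb{E}\Bigl[\bigl(S_\alpha(t)-S_\alpha(s)\bigr)^2/\alpha\Bigr]=\frac{1}{\alpha}\sum_{i\in\mathcal{I}}i^2\bigl(\Lambda_i(t;\alpha)-\Lambda_i(s;\alpha)\bigr)\le\frac{\sqrt{\alpha}+\alpha}{\alpha}\,(t-s)\sum_{i\in\mathcal{I}}i^2M_i\le C\,(t-s).
\]
By the independence of increments this yields, for $s\le t\le r$, the product bound
\[
\mathbb{E}\Bigl[\tfrac{1}{\alpha}\bigl(S_\alpha(t)-S_\alpha(s)\bigr)^2\,\tfrac{1}{\alpha}\bigl(S_\alpha(r)-S_\alpha(t)\bigr)^2\Bigr]\le C^2(t-s)(r-t)\le C^2(r-s)^2,
\]
which is precisely the form required by the classical moment criterion for tightness in the Skorokhod space (Billingsley, Theorem~13.5), applied on each compact interval $[0,T]$.

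Finally I would check that the limit is continuous, so that the convergence in $D$ is genuine: the jumps of $S_\alpha/\sqrt{\alpha}$ have size at most $(\max_{i\in\mathcal{I}}|i|)/\sqrt{\alpha}\to0$, hence the maximal jump vanishes and the limit law is carried by continuous paths. Combining the finite-dimensional convergence with the tightness just obtained (and extending from $[0,T]$ to $[0,\infty)$ in the usual way) gives $S_\alpha/\sqrt{\alpha}\implies Z$. The delicate point is that the moment estimate only controls the increment variance linearly in $t-s$; it is precisely the factorization coming from independence that promotes this to the super-linear $(r-s)^2$ bound needed for tightness, so the independence of increments is used twice, once for the finite-dimensional limit and once, crucially, for tightness.
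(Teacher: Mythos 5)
Your proof is correct and takes essentially the same route as the paper: part ($i$) and the finite-dimensional convergence are reduced to Theorem \ref{teoremaConvergenzaSkellamGeneralizzato} through the independent-increment structure of $S_\alpha$, and weak convergence is obtained from the same centering observation, the same variance bound on increments derived from \eqref{ipotesiConvergenzaDeboleAProcessoGaussiano}, and the same factorized second-moment criterion for tightness in Billingsley (the paper invokes Theorem 13.15 and formula (13.14) rather than Theorem 13.5, but it is the same moment condition with $\beta=\gamma=1$ and $F$ linear). Your extra remarks --- spelling out the Poisson-by-Poisson argument in part ($i$) and noting that the maximal jump of $S_\alpha/\sqrt{\alpha}$ vanishes --- are harmless additions to an argument that otherwise matches the paper's proof step for step.
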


We note that here the convergence in $\mathcal{D}[0,\infty)$ is the so-called weak convergence in the space of the real c\`{a}dl\`{a}g functions endowed with the Skorokhod topology and in ($ii$) we request the process to have zero mean. The hypothesis (\ref{ipotesiConvergenzaDeboleAProcessoGaussiano}) is stronger than one could require; indeed, we just need the tightness of $S_\alpha$. However, the given hypotheses are sufficient for the convergence of the homogeneous process with rates $\lambda_i(t;\alpha) = \alpha\lambda_i\ \forall\ i$ such that $\sum_{i\in\mathcal{I}} i \lambda_i(t;\alpha)=0\ \forall\ t,\alpha$. In this case $S_\alpha/\sqrt{\alpha}$ converges weakly to a scaled Brownian motion.

\begin{proof}
($i$) easily follows from point ($i$) of Theorem \ref{teoremaConvergenzaSkellamGeneralizzato}.

($ii$) We recall that the increments of $S$ are independent and they behave like a Skellam-type process with rate functions connected to the original $\lambda_i$, see (\ref{incrementoSkellamGeneralizzato}). Now, the convergence (\ref{convergenzaFinitoDimensionaliSkellamLimiteKac}) follows from ($ii$) of Theorem \ref{teoremaConvergenzaSkellamGeneralizzato}. 

To prove the weak convergence we can prove that $S_\alpha/\sqrt{\alpha}$ is tight (see Theorem 13.15 and formula (13.14) of \cite{B1999}), thus we show that $\exists \ \beta\ge0,\gamma>1/2$ and $F$ non-decreasing continuous function such that $\mathbb{E}\big|S_\alpha(t)-S_\alpha(s) \big|^{2\beta}\big|S_\alpha(s)-S_\alpha(r) \big|^{2\beta}/\alpha^{2\beta}\le \big|F(t)-F(s) \big|^{2\gamma},\ \forall\ 0\le r\le s\le t$ and $\alpha>0$. Note that under the hypotheses at the end of the statement $\mathbb{E}S(t) = 0$. Now, we show the following inequality
\begin{align}
\mathbb{E}\Big|\frac{S_\alpha(t)-S_\alpha(s) }{\sqrt{\alpha}}\Big|^{2}\Big|\frac{S_\alpha(s)-S_\alpha(r) }{\sqrt{\alpha}}\Big|^{2} &= \frac{1}{\alpha^2}\mathbb{E}\Bigl( S_\alpha(t)-S_\alpha(s)\Bigr)^2 \mathbb{E}\Bigl( S_\alpha(s)-S_\alpha(r)\Bigr)^2  \nonumber\\
& = \frac{1}{\alpha^2} \sum_{i\in\mathcal{I}} i^2 \Lambda_i(s,t;\alpha) \sum_{i\in\mathcal{I}} i^2 \Lambda_i(r,s;\alpha) \nonumber\\
& \le \frac{1}{\alpha^2} \sum_{i\in\mathcal{I}} i^2 (\sqrt{\alpha}+\alpha)M_i (t-s) \sum_{i\in\mathcal{I}} i^2 (\sqrt{\alpha}+\alpha) M_i (s-r)  \label{passaggioIpotesiConvergenzaDeboleAProcessoGaussiano} \\
& \le \Bigl(\frac{1}{\sqrt{\alpha}} +1\Bigr)^2 M^2(t-r)^2 \nonumber\\
& \le 4M^2(t-r)^2,\nonumber
\end{align}
where in step (\ref{passaggioIpotesiConvergenzaDeboleAProcessoGaussiano}) we used (\ref{ipotesiConvergenzaDeboleAProcessoGaussiano}) and $M = \sum_{i\in\mathcal{I}} i^2M_i<\infty$. Hence $S_\alpha$ is tight and this concludes the proof.
\end{proof}

\subsection{Homogeneous case}\label{sottosezioneCasoOmogeneo}

We now derive further properties for the homogeneous generalized Skellam process, i.e. when $\lambda_i(t) = \lambda_i,\ t\ge0,\ \forall \ i$. We denote this process by writing $S\sim HGSP (\lambda_i, i\in \mathcal{I} )$.

In this case $S$ is a L\'{e}vy process with L\'{e}vy measure $\nu(x) = \sum_{i\in\mathcal{I}} \lambda_i\delta_{\{i\}}(x),\ x\in\mathbb{R}$, where $\delta_{\{a\}} $ is the Dirac delta function centered in $a\in\mathbb{R}$. This readily follows from (\ref{SkellamGeneralizzatoInTerminiDiPoisson}).

Furthermore, since the functions $\Lambda_i(t) = \lambda_i t$ are proportional to time, 
\begin{equation}\label{sommaSkellamOmogeneoConTempoModificato}
\sum_{k=1}^K S(t X_k ) = S\Bigg(t\sum_{k=1}^K X_k\Bigg), \ \ \ t\ge0,
\end{equation}
 where $X_1,\dots, X_K$ are non-negative random variables.

\begin{remark}[Skellam process as compound Poisson]\label{proposizioneSkellamGeneralizzatoPoissonComposto}
Let $S\sim HGSP (\lambda_i, i\in \mathcal{I} )$ and $\{X_k\}_{k\in\mathbb{N}}$ be a sequence of independent and identically distributed random variables, copies of a r.v. $X$ such that $P\{X = i\} = \lambda_i / \sum_{j\in\mathcal{I}} \lambda_j$ for $i\in\mathcal{I}$. Then, it is useful to express $S$ as a compound Poisson process,
\begin{equation}\label{SkellamGeneralizzatoComePoissonComposto}
S(t) = \sum_{k = 1}^{N(t)} X_k,\ \ \ t\ge0,
\end{equation}
where $N$ is an independent Poisson process with rate $\sum_{i\in\mathcal{I}} \lambda_i$.  Indeed, the probability generating function of $X$ is, for suitable $u$ in a neighborhood of $0$,
$$\mathbb{E} u^X = \frac{\sum_{i\in\mathcal{I}} \lambda_i u^i}{ \sum_{i\in\mathcal{I}} \lambda_i}. $$
Now, by using the generating function of a compound Poisson, we derive, for $t\ge0$,
\begin{equation*}
\mathbb{E} u^{\sum_{k=1}^{N(t)} X_k} = \text{exp}\Biggl(-t \sum_{i\in\mathcal{I}}  \lambda_i \Bigl(1-\frac{\sum_{i\in\mathcal{I}} \lambda_i u^i}{ \sum_{i\in\mathcal{I}} \lambda_i}\Bigr) \Biggr) = \text{exp}\Biggl(-t \sum_{i\in\mathcal{I}}  \lambda_i -t\sum_{i\in\mathcal{I}} \lambda_iu^i  \Biggr),
\end{equation*}
which coincides with (\ref{funzioneGeneratriceSkellamGeneralizzatoNonOmogeneo}). 

We point out, that if the rate functions are not constant but satisfy $\lambda_i =p_i\lambda, i\in\mathcal{I}$, with $\lambda$ being a continuous and integrable non-negative function and $\sum_{i}p_i = 1$, then expression \eqref{SkellamGeneralizzatoComePoissonComposto} holds with $N$ being a non-homogeneous Poisson process and $X$ such that $P\{X=i\} = p_i,\ i\in\mathcal{I}$. \hfill $\diamond$
\end{remark}

%\begin{remark}[First passage time]
Remark \ref{proposizioneSkellamGeneralizzatoPoissonComposto} permits us to describe the generalized Skellam process in terms of a random walk with a random number of steps. Thus, we can describe some properties, like the first passage time or the sojourn time of the process in terms of those of random walks.
For instance, by assuming $T_n = \inf\{t\ge0\,:\,S(t)\ge n\}$, by means of classical arguments on the compound Poisson processes we obtain that, for $t\ge0$,
$$P\{T_n \le t\} =\sum_{k=0}^\infty P\{N(t) = k\}P\{T_n^{X} \le k\}, $$
where $T_n^X = \inf\{m\in\mathbb{N}\,:\,\sum_{k=1}^m X_k \ge n\}$ is the first passage time of the random walk with steps $X_k$ given in Remark \ref{proposizioneSkellamGeneralizzatoPoissonComposto}. Also the mean follows, $\mathbb{E} T_n = \mathbb{E} T_n^X /\sum_{i\in\mathcal{I}}\lambda_i$.
%\end{remark}

We point out that the Bernoulli decomposition in Proposition \ref{proposizioneCoppiaSkellamScompostoBernoulli}, in the homogeneous case readily comes from the compound Poisson representation and its thinning property.%, which we recall in the following Lemma.
%
%\begin{lemma}\label{lemmaDecomposizioneBernoulliPoissonComposto}
%Let $Z$ be a compound Poisson process such that  $Z(t) = \sum_{k=1}^{N(t)}  X_k,\ t\ge0$, where $N$ is an independent Poisson process of rate $\lambda>0$ and $X_1,\dots$ are i.i.d. random variables. Let $\{B_k\}_{k\in\mathbb{N}}$ be a sequence of i.i.d. Bernoulli random variables with parameter $p\in (0,1)$.
%Then, the Bernoulli decomposition of $Z$ produces two independent compound Poisson processes,
%\begin{equation*}\label{decomposizioneSkellamOmogeneoTramitePoissonComposto}
%\sum_{k=1}^{N(t)} X_kB_k \stackrel{d}{=}\sum_{k=1}^{N_p(t)} X_k \ \ \text{ and }\ \ \ \sum_{k=1}^{N(t)} X_k(1-B_k) \stackrel{d}{=}\sum_{k=1}^{N_{1-p}(t)} X_k,\ \ \ t\ge0,
%\end{equation*}
%where $N_q$ is a Poisson process with rate $\lambda q$.
%\end{lemma}
%The interested reader can find the proof of Lemma \ref{lemmaDecomposizioneBernoulliPoissonComposto} in the Appendix \ref{appendiceDecomposizioneBernoulliPoissonComposto}. Furthermore, Lemma \ref{lemmaDecomposizioneBernoulliPoissonComposto} can be easily extended to a decomposition into $H$ independent subprocesses by means of a Multinomial distribution (indeed, the vector $(B_k, 1-B_k)$ is a Multinomial distribution of size $2$ with parameter $(p,1-p)$).
%\\

We conclude this section by showing an approximation of the generalized Skellam process, generalizing the binomial approximation of the Poisson case.
\begin{proposition}\label{proposizioneApprossimazioneBinomialeSkellamGeneralizzato}
Let $Z_n = \Big\{Z_n(t) = \sum_{k=1}^{\floor{a_n t}} X_k^{(n)} \Big\}_{t\ge0}$ where $X_k^{(n)},\dots$ are independent random variables $\forall\ n,k$ and such that
\begin{equation}
 X_k^{(n)} = \begin{cases}\begin{array}{l l}
i\in\mathcal{I}, & p_{ki}^{(n)},\\
0, & 1- \sum_{i\in\mathcal{I}} p_{ki}^{(n)},
\end{array}\end{cases}
\end{equation}
where $p_{ki}^{(n)}\in(0,1)\ \forall\ i$ and $\sum_{i\in\mathcal{I}} p_{ki}^{(n)}<1$.
If
\begin{equation}\label{ipotesiApprossimazioneSkellam}
\sum_{k=1}^{\floor{a_n t}}p_{ki}^{(n)}\substack{\xrightarrow{\hspace*{1.4cm}}\\ n\longrightarrow\infty}\lambda_i t,\ \ t\ge0, \ \text{ and }\ \max_{0\le k\le a_n} p_{ki}^{(n)}\substack{\xrightarrow{\hspace*{1.4cm}}\\ n\longrightarrow\infty}0 \ \forall \ i\in\mathcal{I},
\end{equation}
then, for $0\le t_1<\dots< t_h$,
\begin{equation}\label{convergenzaFinitoDimensionaliApprossimazioneSkellam}
\Bigl(Z_n(t_1),\dots, Z_n (t_h)\Bigr)\stackrel{d}{\longrightarrow} \Bigl(S(t_1),\dots, S (t_h)\Bigr),
\end{equation}
with $S\sim HGSP (\lambda_i, i\in \mathcal{I} )$.

If, in addition, $\mathbb{E}X_k^{(n)} = 0\ \forall\ n,k$ and $\forall\ i\in\mathcal{I}\ \exists\ F_i$ non-decreasing, continuous functions s.t. $\forall\ 0\le r\le s\le t,$
\begin{equation}\label{ipotesiConvergenzaDeboleApprossimazioneBinomiale}
 \sum_{k=\floor{a_n s}+1}^{\floor{a_n t}}p_{ki}^{(n)}\sum_{k=\floor{a_n r}+1}^{\floor{a_n s}}p_{kj}^{(n)} \le \bigl(F_i(t)-F_i(r)\bigr)\bigl(F_j(t)-F_j(r)\bigr)\ \forall\ i,j,n,
\end{equation}
then $Z_n \implies S$ on $\mathcal{D}[0,\infty)$.
\end{proposition}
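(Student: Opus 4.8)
The plan is to reduce both convergences to the increments of $Z_n$. Since the summands $X_k^{(n)}$ are independent, $Z_n$ has independent increments, exactly like the limit $S\sim HGSP(\lambda_i, i\in\mathcal{I})$, which is a L\'evy process. For the finite-dimensional convergence (\ref{convergenzaFinitoDimensionaliApprossimazioneSkellam}) it then suffices to prove that each increment $Z_n(t_\ell)-Z_n(t_{\ell-1})$ converges in distribution to $S(t_\ell)-S(t_{\ell-1})$: since increments over disjoint index blocks are independent for every $n$ and in the limit, the joint characteristic function factorises and the one-dimensional convergences assemble into the joint one.

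First I would compute, for a single increment, the probability generating function
\begin{equation*}
\mathbb{E}\,u^{Z_n(t_\ell)-Z_n(t_{\ell-1})} = \prod_{k=\floor{a_n t_{\ell-1}}+1}^{\floor{a_n t_\ell}}\Biggl(1+\sum_{i\in\mathcal{I}} p_{ki}^{(n)}\bigl(u^i-1\bigr)\Biggr),
\end{equation*}
read on the unit circle $|u|=1$ so that it coincides with the characteristic function and hence determines the law (this also covers negative and non-integer $i$). Taking logarithms and using $\log(1+x)=x+O(x^2)$, the leading term is $\sum_{k}\sum_{i}p_{ki}^{(n)}(u^i-1)$, which by (\ref{ipotesiApprossimazioneSkellam}) applied at the two endpoints of the block tends to $\sum_{i\in\mathcal{I}}\lambda_i(t_\ell-t_{\ell-1})(u^i-1)$, i.e. the exponent of the increment of (\ref{funzioneGeneratriceSkellamGeneralizzatoNonOmogeneo}) with $\Lambda_i(t)=\lambda_i t$, cf. (\ref{incrementoSkellamGeneralizzato}). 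The technical core is to show the remainder is negligible: since $\mathcal{I}$ is finite, $\bigl|\sum_i p_{ki}^{(n)}(u^i-1)\bigr|\le C\sum_i p_{ki}^{(n)}$, so the sum of squares over $k$ is bounded by $C^2\bigl(\max_k\sum_i p_{ki}^{(n)}\bigr)\sum_k\sum_i p_{ki}^{(n)}$; the second factor stays bounded by (\ref{ipotesiApprossimazioneSkellam}) while the first vanishes by the negligibility (max) condition in (\ref{ipotesiApprossimazioneSkellam}). This yields (\ref{convergenzaFinitoDimensionaliApprossimazioneSkellam}).

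For the weak convergence I would add tightness and invoke the moment criterion for $D[0,\infty)$ (Theorem 13.15 and formula (13.14) of \cite{B1999}), exactly as in the proof of Corollary \ref{corollarioConvergenzaSkellamProcessoGaussiano}. Taking $\beta=1$, for $0\le r\le s\le t$ the independence of increments over disjoint blocks and the zero-mean assumption $\mathbb{E}X_k^{(n)}=0$ give
\begin{align*}
\mathbb{E}\Bigl[\bigl(Z_n(t)-Z_n(s)\bigr)^2\bigl(Z_n(s)-Z_n(r)\bigr)^2\Bigr]
&=\mathbb{V}\bigl(Z_n(t)-Z_n(s)\bigr)\,\mathbb{V}\bigl(Z_n(s)-Z_n(r)\bigr)\\
&=\Biggl(\sum_{i\in\mathcal{I}} i^2\!\!\sum_{k=\floor{a_n s}+1}^{\floor{a_n t}}\!\!p_{ki}^{(n)}\Biggr)\Biggl(\sum_{j\in\mathcal{I}} j^2\!\!\sum_{k=\floor{a_n r}+1}^{\floor{a_n s}}\!\!p_{kj}^{(n)}\Biggr),
\end{align*}
where I used $\mathbb{V}X_k^{(n)}=\mathbb{E}(X_k^{(n)})^2=\sum_i i^2 p_{ki}^{(n)}$. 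Expanding the product over $i,j$ and applying hypothesis (\ref{ipotesiConvergenzaDeboleApprossimazioneBinomiale}) to each (nonnegative) term bounds the right-hand side by $\sum_{i,j} i^2 j^2\bigl(F_i(t)-F_i(r)\bigr)\bigl(F_j(t)-F_j(r)\bigr)=\bigl(F(t)-F(r)\bigr)^2$ with $F=\sum_{i\in\mathcal{I}} i^2 F_i$, which is non-decreasing and continuous. This is precisely the moment bound with exponents $2\beta=2$ and $2\gamma=2$, so $\gamma=1>1/2$, giving tightness; together with (\ref{convergenzaFinitoDimensionaliApprossimazioneSkellam}) it yields $Z_n\implies S$.

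I expect the main obstacle to be the negligibility estimate in the first step, namely verifying that the max condition in (\ref{ipotesiApprossimazioneSkellam}) kills the quadratic remainder uniformly over the relevant $u$; the tightness step is essentially forced, since hypothesis (\ref{ipotesiConvergenzaDeboleApprossimazioneBinomiale}) is tailored to deliver the product bound $\bigl(F(t)-F(r)\bigr)^2$ directly.
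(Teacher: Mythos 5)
Your proposal is correct and follows essentially the same route as the paper: reduction to a single increment via independence of increments, the same product formula for the generating function with the logarithmic expansion, and the identical tightness argument via Billingsley's moment criterion with $F=\sum_{i\in\mathcal{I}}i^2F_i$. Your two refinements — reading the generating function on the unit circle as a characteristic function, and the explicit bound $\sum_k x_k^2\le(\max_k|x_k|)\sum_k|x_k|$ justifying the negligibility of the quadratic remainder (which the paper only indicates with a ``$\sim$'') — tighten the same argument rather than change it.
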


We point out that in the case of $p_{ki}^{(n)} = \lambda_i / n\ \forall\ i,k$ (with $n$ sufficiently large), $a_n = n$ and $\sum_{i\in\mathcal{I}}i\lambda_i = 0$, the hypotheses of Proposition \ref{proposizioneApprossimazioneBinomialeSkellamGeneralizzato} hold. We briefly show how to check hypothesis (\ref{ipotesiApprossimazioneSkellam}). If $t-r< 1/n$, then either $ \floor{n r} = \floor{n s}$ or $\floor{n s} = \floor{n t}$ so $\sum_{k=\floor{n s}+1}^{\floor{n t}}p_{ki}^{(n)}\sum_{k=\floor{n r}+1}^{\floor{n s}}p_{kj}^{(n)} = 0\le\max_{i\in\mathcal{I}}\lambda_i^2 4(t-r)^2$. If $t-r\ge1/n$, then
\begin{align*}
\sum_{k=\floor{n s}+1}^{\floor{n t}}p_{ki}^{(n)}\sum_{k=\floor{n r}+1}^{\floor{n s}}p_{kj}^{(n)}& =  \sum_{k=\floor{n s}+1}^{\floor{n t}} \frac{\lambda_i}{n}\sum_{k=\floor{n r}+1}^{\floor{n s}} \frac{\lambda_j}{n} = \frac{\lambda_i}{n}\bigl(\floor{n t} - \floor{n s}\bigr) \frac{\lambda_j}{n}\bigl(\floor{n s} - \floor{n r}\bigr) \\
&\le  \frac{\lambda_i}{n}\bigl(n t - (n s-1)\bigr)\frac{\lambda_j}{n}\bigl(n s - (n r-1)\bigr) \le \lambda_i\Bigl(t-r+\frac{1}{n}\Bigr) \lambda_j\Bigl(t-r+\frac{1}{n}\Bigr) \\
&\le \lambda_i 2(t-r)\lambda_j 2 (t-r)  \le \max_{i\in\mathcal{I}}\lambda_i^2 4(t-r)^2.
\end{align*}
Thus, $Z_n\implies S$ (with $\mathbb{E}S(t) = 0\ \forall\ t$). In this case, if $\mathcal{I} = \{1\}$ we have the binomial approximation of the Poisson process.

\begin{proof}
First, note that $Z_n$ has independent increments, indeed, for $0\le s<t,\ Z_n(t)- Z_n(s)  = \sum_{k=\floor{a_n s}+1}^{\floor{a_n t}} X_{k}^{(n)}$. Now, to prove the convergence of the finite dimensional distributions (\ref{convergenzaFinitoDimensionaliApprossimazioneSkellam}) it is sufficient to prove that $Z_n(t) - Z_n(s) \stackrel{d}{\longrightarrow}S(t)-S(s)$. We prove it by showing the convergence of the probability generating function,
\begin{align}
\mathbb{E} u^{Z_n(t)- Z_n(s)} &= \prod_{k=\floor{a_n s}+1}^{\floor{a_n t}}\mathbb{E}u^{X_k^{(n)}} =  \prod_{k=\floor{a_n s}+1}^{\floor{a_n t}}\Biggl( \sum_{ i\in\mathcal{I} } u^ip_{ki}^{(n)}+ 1- \sum_{ i\in\mathcal{I} } p_{ki}^{(n)}\Biggr) \nonumber\\
&= \exp\Biggl( \sum_{k=\floor{a_n s}+1}^{\floor{a_n t}} \ln\Bigl( 1+ \sum_{ i\in\mathcal{I} } (u^i-1) p_{ki}^{(n)} \Bigr) \Biggr) \nonumber\\
& \sim \exp\Biggl( \sum_{k=\floor{a_n s}+1}^{\floor{a_n t}}  \sum_{ i\in\mathcal{I} } (u^i-1) p_{ki}^{(n)}  \Biggr) \label{passaggioLimiteApprossimazioneSkellam}\\
& =  \exp\Biggl(   \sum_{ i\in\mathcal{I} } (u^i-1) \sum_{k=\floor{a_n s}+1}^{\floor{a_n t}} p_{ki}^{(n)}  \Biggr) \nonumber\\
&\longrightarrow e^{\sum_{ i\in\mathcal{I} } \lambda_i (t-s) (u^i-1)}, \ \ \ \text{as }n\longrightarrow\infty,
\end{align}
which is the probability generating function of the increment $S(t)-S(s)$, see (\ref{funzioneGeneratriceSkellamGeneralizzatoNonOmogeneo}). Note that in step (\ref{passaggioLimiteApprossimazioneSkellam}) we considered the approximation of the logarithm by keeping in mind that $\sum_{i\in\mathcal{I}} (u^i-1)p_{ki}^{(n)}\longrightarrow0$ as $n\longrightarrow\infty$, which follows from (\ref{ipotesiApprossimazioneSkellam}), since $p_{ki}^{(n)}\longrightarrow0 \ \forall \ i,k$.
\\

Now, in order to prove the weak convergence we show that $Z_n$ is tight. As described in the proof of Corollary \ref{corollarioConvergenzaSkellamProcessoGaussiano}, it is sufficient to show that $\exists \ \alpha\ge0,\beta>1/2$ and $F$ non-decreasing continuous function such that $\mathbb{E}\big|Z_n(t)-Z_n(s) \big|^{2\alpha}\big|Z_n(s)-Z_n(r) \big|^{2\alpha}\le \big|F(t)-F(s) \big|^{2\beta},\ \forall\ 0\le r\le s\le t$ and $n$. 
\begin{align}
\mathbb{E}\big|Z_n(t)-Z_n(s) \big|^2\big|Z_n(s)-Z_n(r) \big|^2 & = \mathbb{E}\Biggl( \sum_{k=\floor{a_n s}+1}^{\floor{a_n t}} X_{k}^{(n)}\Biggr)^2  \mathbb{E}\Biggl( \sum_{k=\floor{a_n r}+1}^{\floor{a_n s}} X_{k}^{(n)}\Biggr)^2  \nonumber\\
&=  \Biggl(\sum_{k=\floor{a_n s}+1}^{\floor{a_n t}}\mathbb{E}\bigl(X_{k}^{(n)}\bigr)^2 \Biggr)\Biggl( \sum_{k=\floor{a_n r}+1}^{\floor{a_n s}} \mathbb{E}\bigl(X_{k}^{(n)}\bigr)^2\Biggr)  \nonumber\\
& =\sum_{i,j\in\mathcal{I}}i^2j^2 \sum_{k=\floor{a_n s}+1}^{\floor{a_n t}}p_{ki}^{(n)} \sum_{k=\floor{a_n r}+1}^{\floor{a_n s}}p_{kj}^{(n)} \nonumber\\
&\le \sum_{i,j\in\mathcal{I}}i^2j^2  \bigl(F_i(t)-F_i(r)\bigr)\bigl(F_j(t)-F_j(r)\bigr) \label{passaggioConvergenzaDeboleApprossimazioneSkellam} \\
& = \sum_{i\in\mathcal{I}}i^2 \bigl(F_i(t)-F_i(r)\bigr)\sum_{j\in\mathcal{I}}j^2 \bigl(F_j(t)-F_j(r)\bigr)  \nonumber\\
& =\bigl(F(t)-F(r)\bigr)^2 \nonumber
\end{align}
where $F = \sum_{i\in\mathcal{I}}i^2F_i$ is a non-decreasing and continuous function (since it is sum of non-decreasing continuous functions). Note that in (\ref{passaggioConvergenzaDeboleApprossimazioneSkellam}) we used the hypothesis (\ref{ipotesiConvergenzaDeboleApprossimazioneBinomiale}).
\end{proof}

\section{Fractional integral of the generalized Skellam process}\label{sezioneIntegraleSkellam}

We now focus our attention on the fractional integral of the non-homogeneous generalized Skellam process. For a suitable function $f$, $I^\alpha f(t) = \int_0^t (t-s)^{\alpha - 1} f(s)\dif s/ \Gamma(\alpha)$, with $t\ge0, \alpha>0$, denotes the fractional derivative of $f$ of order $\alpha$. Now, let $S\sim NHGSP (\lambda_i, i\in \mathcal{I} )$, we study the fractional integral of order $\alpha>0$ of $S$, meaning the process $S^\alpha = \big\{S^\alpha(t)\big\}_{t\ge0}$ such that
\begin{equation}\label{integraleFrazionarioSkellamGeneralizzato}
S^\alpha (t) = I^\alpha S(t) =  \frac{1}{\Gamma(\alpha)} \int_0^t (t-s)^{\alpha-1} S(s)\dif s,\ \ \ t\ge0,\ \alpha>0.
\end{equation}
By denoting with $N^\alpha$ the fractional integral of the Poisson process (i.e. (\ref{integraleFrazionarioSkellamGeneralizzato}) with $\mathcal{I} = \{1\}$), from (\ref{SkellamGeneralizzatoInTerminiDiPoisson}), we derive that 
\begin{equation}\label{integraleSkellamGeneralizzatoInTerminiDiPoisson}
S^{\alpha}(t) = \sum_{i\in\mathcal{I}} i N_i^\alpha(t),\ \ \ t\ge0,\ \alpha>0,
\end{equation}
where the processes $N_i^\alpha s$ are all independent.

We point out that some characteristics of the fractional integral of the homogeneous Poisson process have been studied in the literature, see for instance \cite{OP2013}.

\begin{proposition}
The process $S^\alpha$ in (\ref{integraleFrazionarioSkellamGeneralizzato}) has the following moments:
\begin{equation}\label{momentiPrimoSecondoIntegraleSkellamGeneralizzatoNonOmogeneo}
\mathbb{E}S^\alpha(t) = \sum_{i\in\mathcal{I}} i I^\alpha \Lambda_i(t) =  \sum_{i\in\mathcal{I}} i I^{\alpha+1} \lambda_i(t), \ \ \  \text{Var}S^\alpha(t) = \frac{2\Gamma(2\alpha)}{\Gamma(\alpha)\Gamma(\alpha+1)}\sum_{i\in\mathcal{I}} i^2 I^{2\alpha}\Lambda_i(t),  
\end{equation} 
and for $0\le s,t$,
\begin{equation}\label{covarianzaIntegraleSkellamGeneralizzato}
 \text{Cov}\Bigl( S^\alpha(s),S^\alpha(t)\Bigr) = \sum_{i\in\mathcal{I}} \frac{i^2}{\Gamma(\alpha)\Gamma(\alpha+1)}\int_0^{s\wedge t} (s-u)^{\alpha-1}(t-u)^{\alpha-1}(s+t-2u)\Lambda_{i}(u)\dif u.
\end{equation}
\end{proposition}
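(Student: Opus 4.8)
The plan is to reduce every quantity to the independent Poisson building blocks via the representation $S^\alpha(t) = \sum_{i\in\mathcal{I}} i N_i^\alpha(t)$ of (\ref{integraleSkellamGeneralizzatoInTerminiDiPoisson}), so that each moment becomes an $i^n$-weighted sum of the corresponding moment of $N_i^\alpha$. For the mean I would move $\mathbb{E}$ inside the fractional integral (\ref{integraleFrazionarioSkellamGeneralizzato}) by Fubini and insert $\mathbb{E}S(s) = \sum_{i\in\mathcal{I}} i\Lambda_i(s)$ from (\ref{momentiPrimoSecondoSkellamGeneralizzatoNonOmogeneo}); this gives $\mathbb{E}S^\alpha(t) = \sum_{i\in\mathcal{I}} i\, I^\alpha\Lambda_i(t)$, and the second equality follows from $\Lambda_i = I^1\lambda_i$ and the semigroup property $I^\alpha I^1 = I^{\alpha+1}$ of the fractional integral.

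The heart of the proof is the covariance, of which the variance is the special case $s=t$. By independence of the $N_i$ I would write $\text{Cov}\bigl(S^\alpha(s),S^\alpha(t)\bigr) = \sum_{i\in\mathcal{I}} i^2\,\text{Cov}\bigl(N_i^\alpha(s),N_i^\alpha(t)\bigr)$ and, assuming $s\le t$ without loss of generality by symmetry, expand each summand as
\[
\text{Cov}\bigl(N_i^\alpha(s),N_i^\alpha(t)\bigr) = \frac{1}{\Gamma(\alpha)^2}\int_0^s\!\!\int_0^t (s-u)^{\alpha-1}(t-v)^{\alpha-1}\Lambda_i(u\wedge v)\dif v\dif u,
\]
using $\text{Cov}\bigl(N_i(u),N_i(v)\bigr) = \Lambda_i(u\wedge v)$. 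The key manoeuvre to disentangle the minimum is to write $\Lambda_i(u\wedge v) = \int_0^\infty \mathbf{1}_{\{z<u\}}\mathbf{1}_{\{z<v\}}\lambda_i(z)\dif z$; interchanging the order of integration factors the $(u,v)$-integral into $\int_z^s (s-u)^{\alpha-1}\dif u = (s-z)^\alpha/\alpha$ times $\int_z^t (t-v)^{\alpha-1}\dif v = (t-z)^\alpha/\alpha$, leaving the single integral $\frac{1}{\Gamma(\alpha)^2\alpha^2}\int_0^{s\wedge t}\lambda_i(z)(s-z)^\alpha(t-z)^\alpha\dif z$.

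The final step is an integration by parts in $z$ against $\dif\Lambda_i(z) = \lambda_i(z)\dif z$. With $g(z) = (s-z)^\alpha(t-z)^\alpha$, the boundary terms vanish (at $z=0$ because $\Lambda_i(0)=0$, at $z=s\wedge t$ because $g$ vanishes there for $\alpha>0$), while $g'(z) = -\alpha(s-z)^{\alpha-1}(t-z)^{\alpha-1}(s+t-2z)$; combined with $\Gamma(\alpha)^2\alpha = \Gamma(\alpha)\Gamma(\alpha+1)$ this reproduces exactly (\ref{covarianzaIntegraleSkellamGeneralizzato}). Setting $s=t$ collapses $(s-z)^{\alpha-1}(t-z)^{\alpha-1}(s+t-2z)$ into $2(t-z)^{2\alpha-1}$ and, recognizing $\int_0^t (t-z)^{2\alpha-1}\Lambda_i(z)\dif z = \Gamma(2\alpha)\,I^{2\alpha}\Lambda_i(t)$, yields the variance in (\ref{momentiPrimoSecondoIntegraleSkellamGeneralizzatoNonOmogeneo}). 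I expect the only delicate points to be the Fubini justifications — guaranteed by the finiteness of $\Lambda_i$ on bounded intervals — and the vanishing of the boundary terms, which genuinely relies on $\alpha>0$.
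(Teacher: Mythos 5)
Your proof is correct, and its skeleton matches the paper's: reduce to the independent Poisson components via $S^\alpha(t)=\sum_{i\in\mathcal{I}} i N_i^\alpha(t)$, compute the mean by Fubini plus the semigroup identity $I^\alpha I^1=I^{\alpha+1}$, and obtain the variance as the $s=t$ case of the covariance. Where you genuinely diverge is in disentangling the minimum in $\mathrm{Cov}\bigl(N_i(u),N_i(v)\bigr)=\Lambda_i(u\wedge v)$. The paper splits the rectangle $[0,s]\times[0,t]$ into the three regions $\{u\le w\le s\}$, $\{w\le u\le s\}$, $\{u\le s\le w\}$, evaluates the inner integrals directly, and lands on the sum
\[
\frac{1}{\Gamma(\alpha)\Gamma(\alpha+1)}\Bigl(\textstyle\int_0^s (s-u)^{\alpha-1}(t-u)^{\alpha}\Lambda_i(u)\dif u+\int_0^s (s-u)^{\alpha}(t-u)^{\alpha-1}\Lambda_i(u)\dif u\Bigr),
\]
which recombines into the stated kernel $(s-u)^{\alpha-1}(t-u)^{\alpha-1}(s+t-2u)$. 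You instead write $\Lambda_i(u\wedge v)=\int_0^\infty \mathbf{1}_{\{z<u\}}\mathbf{1}_{\{z<v\}}\lambda_i(z)\dif z$, factor the double integral by Tonelli into $\frac{1}{\Gamma(\alpha+1)^2}\int_0^{s\wedge t}(s-z)^\alpha(t-z)^\alpha\lambda_i(z)\dif z$, and then integrate by parts to pass from $\lambda_i$ to $\Lambda_i$, the differentiation of $(s-z)^\alpha(t-z)^\alpha$ producing the $(s+t-2z)$ factor. The trade-offs: the paper's route needs no integration by parts and hence no discussion of boundary terms or of the integrable singularity of $g'$ near $z=s\wedge t$ when $\alpha<1$ (a point you correctly flag but should handle by truncating at $s-\epsilon$ and passing to the limit); your route avoids the case analysis on the region, is manifestly symmetric in $s$ and $t$, and yields as a useful byproduct the alternative representation of the covariance directly in terms of the rate functions $\lambda_i$ rather than their integrals $\Lambda_i$. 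Both are complete and correct derivations of \eqref{momentiPrimoSecondoIntegraleSkellamGeneralizzatoNonOmogeneo} and \eqref{covarianzaIntegraleSkellamGeneralizzato}.
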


\begin{proof}
In light of (\ref{integraleSkellamGeneralizzatoInTerminiDiPoisson}) we limit ourselves to the study of the moments of $N^\alpha$, the fractional integral of an arbitrary non-homogeneous Poisson process.
\begin{align*}
\mathbb{E}N^\alpha(t) &= \frac{1}{\Gamma(\alpha)}\int_0^t (t-s)^{\alpha-1} \Lambda(s)\dif s \\
%&=\frac{1}{\Gamma(\alpha)}\int_0^t (t-s)^{\alpha-1} \int_0^s \lambda(u)\dif u\dif s\\ 
&=\frac{1}{\Gamma(\alpha)}\int_0^t  \lambda(u)\dif u \int_u^t(t-s)^{\alpha-1} \dif s\\ 
& = \frac{1}{\Gamma(\alpha+1)}\int_0^t (t-u)^{\alpha}
 \lambda(u)\dif u.
\end{align*}

It is easy to see that the covariance (\ref{covarianzaIntegraleSkellamGeneralizzato}) reduces to the variance in (\ref{momentiPrimoSecondoIntegraleSkellamGeneralizzatoNonOmogeneo}) when $s=t$, so we limit ourselves to prove the following, for $0\le s\le t$,
\begin{align*}
 \text{Cov}&\Big(N^\alpha(s), N^\alpha(t)\Big)\\
& =  \frac{1}{\Gamma(\alpha)^2}\int_0^s (s-u)^{\alpha-1} \dif u \int_0^t(t-w)^{\alpha-1} \dif w\,  \text{Cov}\Big(N(u), N(w)\Big)\\
& = \frac{1}{\Gamma(\alpha)^2} \Biggl( \int_0^s (s-u)^{\alpha-1} \dif u \int_u^s(t-w)^{\alpha-1} \dif w \Lambda(u) \\
&\ \ \ + \int_0^s (t-w)^{\alpha-1} \dif w \int_w^s(s-u)^{\alpha-1} \dif u \Lambda(w) + \int_0^s (s-u)^{\alpha-1} \dif u \int_s^t(t-w)^{\alpha-1} \dif w \Lambda(u)\Biggr)\\
& =  \frac{1}{\Gamma(\alpha)\Gamma(\alpha+1)}\Biggl( \int_0^s (s-u)^{\alpha-1}(t-u)^{\alpha}\Lambda(u)\dif u +  \int_0^s (s-w)^{\alpha}(t-w)^{\alpha-1}\Lambda(w)\dif w \Biggr),
\end{align*}
where in the second step we suitably separated the integration set $[0,s]\times[0,t]$ and we considered the covariance of the Poisson process.
\end{proof}

Now we restrict ourselves to the case of the classical integral of a homogeneous Skellam process, i.e. we study $S^1$ with constant rates. In this case we have a representation in terms of a compound Poisson multiplied by the time variable. This result readily follows from Remark \ref{proposizioneSkellamGeneralizzatoPoissonComposto} and the following Proposition.

\begin{proposition}\label{proposizioneIntegraleProcessoPoissonComposto}
Let $Z$ be a compound Poisson process such that  $Z(t) = \sum_{k=1}^{N(t)}  X_k,\ t\ge0$, where $N$ is an independent Poisson processes of rate $\lambda>0$ and $X_1,\dots$ are i.i.d. random variables. Then, the Riemann integral $Y$ of $Z$ is
\begin{equation}\label{integraleProcessoPoissonComposto}
Y(t) = \int_0^t Z(s)\dif s \stackrel{d}{=} t\sum_{k=1}^{N(t)} X_kU_k
\end{equation}
where $U_1,U_2,\dots\sim Unif(0,1)$ are i.i.d. random variables, independent from the other terms.
\end{proposition}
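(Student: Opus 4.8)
The plan is to first express the Riemann integral explicitly through the jump times of $Z$, and then to transfer the randomness of those jump times onto an i.i.d.\ family of uniforms by means of the order-statistics property of the Poisson process.

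First I would record that $Z$ is piecewise constant: writing $T_1<T_2<\cdots$ for the arrival times of $N$, the process increases by $X_k$ at $T_k$, so that $Z(s)=\sum_{k=1}^{N(t)}X_k\mathbf{1}_{\{T_k\le s\}}$ for $s\in[0,t]$. Since $N(t)<\infty$ a.s., the (finite) sum and the integral may be interchanged, giving
\[
Y(t)=\int_0^t Z(s)\dif s=\sum_{k=1}^{N(t)}X_k\int_{T_k}^t\dif s=\sum_{k=1}^{N(t)}X_k\,(t-T_k),
\]
because the $k$-th jump contributes to the integrand only on $[T_k,t]$.

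Next I would condition on $\{N(t)=n\}$. By the order-statistics property of the Poisson process the unordered set of arrival times is distributed as $n$ independent $Uniform(0,t)$ variables; combined with the independence of the marks, the unordered collection of pairs $\{(T_k,X_k)\}_{k=1}^n$ is then distributed as $n$ i.i.d.\ draws from the product law $Uniform(0,t)\otimes\mathcal{L}(X)$. The decisive observation is that the sum $\sum_k X_k(t-T_k)$ is invariant under permutations of the index $k$, so it may be evaluated with this i.i.d.\ representation irrespective of the actual arrival order. As $t-T_k$ is again $Uniform(0,t)$, equal in law to $tU_k$ with $U_k\sim Uniform(0,1)$ independent of $X_k$, we obtain, conditionally on $\{N(t)=n\}$,
\[
Y(t)\stackrel{d}{=}t\sum_{k=1}^n X_kU_k .
\]

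Finally I would remove the conditioning: since $N(t)$ has the same law on both sides and is independent of the family $(X_k,U_k)_{k\ge1}$, averaging the conditional identity against $P\{N(t)=n\}$ yields $Y(t)\stackrel{d}{=}t\sum_{k=1}^{N(t)}X_kU_k$, which is \eqref{integraleProcessoPoissonComposto}. The one delicate point is the conditioning step: the arrival times $T_k$ are ordered and hence not i.i.d., so one must use the \emph{unordered} form of the order-statistics property and invoke the symmetry of the sum to legitimately replace the uniform order statistics by i.i.d.\ uniforms; granting this, the remaining manipulations are routine.
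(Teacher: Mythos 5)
Your proof is correct, but it follows a genuinely different route from the paper's. The paper invokes Lemma 1 of Xia (2018), which gives the characteristic function of the time integral of a general L\'{e}vy process, $\mathbb{E}e^{\mathrm{i}\gamma \int_0^t X(s)\dif s} = \exp\bigl( t \int_0^1 \ln \mathbb{E} e^{\mathrm{i}\gamma t z X(1)}\dif z \bigr)$, specializes it to the compound Poisson case, and recognizes the resulting expression $\exp\bigl(-t\lambda\bigl[1-\mathbb{E}e^{\mathrm{i}\gamma t U X}\bigr]\bigr)$ as the characteristic function of $t\sum_{k=1}^{N(t)}X_kU_k$. You instead work pathwise: the identity $Y(t)=\sum_{k=1}^{N(t)}X_k(t-T_k)$, the order-statistics property of Poisson arrivals conditionally on $N(t)=n$, and the exchangeability argument that lets you replace the ordered pairs $(T_k,X_k)$ by i.i.d.\ pairs because the sum is a symmetric function of them. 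Both are valid; your handling of the one delicate point (ordered versus unordered arrival times, resolved via symmetry of the sum and independence of the marks) is exactly right. What each approach buys: yours is self-contained and elementary, requiring no external lemma and no L\'{e}vy machinery, and it explains \emph{why} uniforms appear (each jump contributes $X_k(t-T_k)$ and $t-T_k$ is conditionally uniform); the paper's is shorter given the lemma, and the lemma itself applies to arbitrary L\'{e}vy processes, which is why the paper reuses the same transform technique in the subsequent remark on iterated running averages, where a pathwise bookkeeping of jump times would become more cumbersome.
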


\begin{proof}
It is well-known that the compound Poisson process is a L\'{e}vy process, therefore, the proposition easily follows from Lemma 1 of \cite{X2018} which provides the following general result on the moment generating function of the integral of a L\'{e}vy process. Let $X$ be a L\'{e}vy process, then for $\gamma\in\mathbb{R},\ t\ge0$,
\begin{equation}
\mathbb{E}e^{i\gamma \int_0^t X(s)\dif s} = \text{exp}\Biggl( t \int_0^1 \ln \mathbb{E} e^{i\gamma t z X(1)}\dif z \Biggr).
\end{equation}
Now, for the compound Poisson process $Z$, with $X,U$ being copies of $X_k$ and $U_k$ respectively, we derive
\begin{align*}
\mathbb{E}e^{i\gamma \int_0^t Z(s)\dif s}&=  \text{exp}\Biggl( t \int_0^1 \ln \mathbb{E} e^{i\gamma t z Z(1)}\dif z \Biggr) \\
&=  \text{exp}\Biggl( t \int_0^1\Big[-\lambda + \lambda \mathbb{E} e^{i\gamma  t z X}\Bigr] \dif z \Biggr)\\
%& =  \text{exp}\Biggl( -t \lambda + t\lambda \int_0^1\mathbb{E} e^{i\gamma t z  X}\dif z \Biggr)\\
& =  \text{exp}\Biggl( -t \lambda \Bigl[1- \mathbb{E} e^{i\gamma t UX} \Bigr]\Biggr)
\end{align*}
which coincides with the moment generating function of the right-hand side of (\ref{integraleProcessoPoissonComposto}).
\end{proof}

Hence, if $S\sim NHGSP (\lambda_i, i\in \mathcal{I} )$, then $S^1(t) =  t\sum_{k=1}^{N(t)} X_kU_k$ where the $X_k$ are given in (\ref{SkellamGeneralizzatoComePoissonComposto}). As usual, a compound Poisson representation can be extremely useful to obtain further properties of the process, as described after the proof of Remark \ref{proposizioneSkellamGeneralizzatoPoissonComposto}. In this case, it is also worthwhile to note the ease of deriving the moments. Indeed, for a compound Poisson $Z$ defined as in Proposition \ref{proposizioneIntegraleProcessoPoissonComposto}, for $s,t\ge0$, we have $\mathbb{E}Z(t) = \mathbb{E}N(t)\mathbb{E}X,\ \text{Var}Z(t) = \text{Var}N(t)\big(\mathbb{E}X\big)^2 + \mathbb{E}N(t)\text{Var}X$, $\text{Cov}\big(Z(s),Z(t)\big) = \mathbb{E}N(s\wedge t)\text{Var} X$, where the $X$ is a copy of the $X_k$.

%\begin{remark}[Running average]
Note that by means of Proposition \ref{proposizioneIntegraleProcessoPoissonComposto} the running average of a compound Poisson process $Z$, $Z_A (t)=\int_0^t Z(s)\dif s /t $, is pointwise a compound Poisson process with modified jumps as displayed in (\ref{integraleProcessoPoissonComposto}). 
%Thus, we can derive the iterated running average. We denote by $Z^{(m)}$ the $m$-th fold running average, $m\in\mathbb{N}$, and $Z_A^{(0)} = Z$. Now, with $M>0,\ t\ge0$, we have
%$$Z_A^{(M)}(t) = \int_0^t \frac{\dif t_1}{t_1} Z_A^{(M-1)}(t_1) =  \int_0^t \frac{\dif t_1}{t_1}\cdots \int_0^{t_{M-1}}\frac{\dif t_M}{t_M} Z(t_M) \stackrel{d}{=} \sum_{k=1}^{N(t)} X_kU_k^{(1)}\cdots U_k^{(M)},$$
%where in the last equality we used Proposition \ref{proposizioneIntegraleProcessoPoissonComposto} and $U_k^{(m)}\sim Unif(0,1)$ are i.i.d. random variables for $m=1\dots,M, \ k\in\mathbb{N}$. We point out that $\prod_{m=1}^M U_k^{(m)}$ converges in mean to $0$ as $M\rightarrow \infty$ (for every $k$). Therefore, if $|X_k|$ has finite mean, then $Z^{(M)}_A(t)\rightarrow 0$ in mean. Indeed, with $X, U^{(m)}$ being copies of $X_k$ and $U^{(m)}_k$ respectively, $\mathbb{E}|Z(t)| = \mathbb{E}N(t)\mathbb{E}|X|\mathbb{E}\prod_{m=1}^M U^{(m)} \rightarrow 0$.
%\hfill$\diamond$
%\end{remark}

\section{Fractional generalized Skellam processes}\label{sezioneSkellamFrazionario}

In this section we study fractional versions of the non-homogeneous generalized Skellam process. Inspired by the work \cite{OT2015}, our approach is based on the fractionalization of the difference operator in the right-hand side of equation (\ref{equazioneDifferenzeDifferenzialeSkellamGeneralizzato}), using Bernstein functions; we refer to \cite{SSV2012} for a detailed discussion. In Section \ref{sottosezioneSkellamFrazionarioNelTempo} we use also a time-fractional operator.

We recall that $f:[0,\infty)\longrightarrow [0,\infty)$ is a Bernstein function if $f\in C^\infty, \ (-1)^n \dif^n f/\dif x^n \le 0\ \forall \ n\ge1$ and can be expressed as
\begin{equation}
f(x) = a+bx+ \int_0^\infty\Bigl( 1-e^{-xw}\Bigr) \nu(\dif w), \ \ \ x\ge0,
\end{equation}
where $a,b\ge0$ and $\nu$ is a L\'{e}vy measure, i.e. such that $\int_0^\infty (s\wedge1)\nu(\dif s)<\infty$.
Bernstein functions are related to non-decreasing L\'{e}vy processes, also known as subordinators. Indeed, for each Bernstein function $f$ there exists a subordinator $\mathcal{H}_f$ such that $f$ is the L\'{e}vy symbol of $\mathcal{H}_f$, i.e. $\mathbb{E}e^{-\mu\mathcal{H}_f(t)} = e^{-tf(\mu)}, \ \mu, t\ge0$. Hereafter we assume $a=b=0$.

Now, for the sake of clarity we restrict ourselves to the case where $\mathcal{I}\subset \mathbb{Z}$, and we rewrite (\ref{equazioneDifferenzeDifferenzialeSkellamGeneralizzato}) as
\begin{equation}\label{equazioneDifferenzeDifferenzialeSkellamGeneralizzatoOperatori}
  \frac{\dif }{\dif t} p_n(t)= -\sum_{i\in\mathcal{I}} \lambda_i(t) \Bigl(I-B^i\Bigr) p_n(t), \ \ \ t \ge0, n\in \mathbb{Z},
\end{equation}
where $I$ is the identity operator and $B$ is the backward operator, such that $B^i p_n(t) = p_{n-i}(t)\ \forall \ t$ (meaning that $B^i = F^{-i}$ if $i<0$, with $F$ being the forward operator). 

We here state the following Theorem concerning the fractional version of (\ref{equazioneDifferenzeDifferenzialeSkellamGeneralizzatoOperatori}). 

\begin{theorem}\label{teoremaDefinzioniEquivalentiSkellamGeneralizzatoBernstein}
Let $\mathcal{I}\subset \mathbb{Z}\setminus\{0\}, \ |\mathcal{I}|<\infty$, integrable $\lambda_i : [0,\infty] \longrightarrow [0, \infty)$ and $f_i$ be a Bernstein function $\forall\ i\in\mathcal{I}$. Then, the solution to the fractional difference-differential problem 
\begin{equation}\label{equazioneDifferenzeDifferenzialeSkellamGeneralizzatoOperatoriBernstein}
  \frac{\dif }{\dif t} p_n(t)= -\sum_{i\in\mathcal{I}} f_i\Bigl( \lambda_i(t)\big(I-B^i\big) \Bigr)p_n(t), \ \ \ t \ge0,\ n\in \mathcal{S} =  \bigcup_{m=1}^\infty m\mathcal{I},\ \ \ p_n(0) =\begin{cases}\begin{array}{l l}1,& n=0,\\0,&n\not=0,\end{array}\end{cases} 
\end{equation}
is the probability law of a stochastic process $S_f$ with independent increments, $S_f(0) = 0\ a.s.$ and, for $t\ge0,\ k\in \mathcal{S}$,
\begin{equation}\label{definizioneInfinitesimaleSkellamGeneralizzatoNonOmogeneoBernstein}
P\{S_f(t+\dif t) = k+n\,|\,S_f(t) = k\} = 
\begin{cases}
\begin{array}{l l}
\displaystyle\sum_{\substack{m\in\mathbb{N}, i\in\mathcal{I}\\mi = n}} \frac{\lambda_i(t)^m}{m!}\dif t \int_0^\infty e^{-\lambda_i(t) w} w^m\nu_i(\dif w)+ o(\dif t), & n\in \bigcup_{k\ge1}^\infty k\mathcal{I},\\
1-\sum_{i\in\mathcal{I}} f_i\big(\lambda_i(t)\big) \dif t + o(\dif t), & n=0, \\
o(\dif t), & \text{otherwise}.
\end{array}
\end{cases}
\end{equation}
We define $S_f$ non-homogeneous generalized Bernstein-fractional Skellam process and for $u$ in the neighborhood of $0$,
\begin{equation}\label{funzioneGeneratriceProbabilitaSkellamFrazionarioBernstein}
\mathbb{E} u^{S_f(t)} = \exp\Biggl(-\sum_{i\in \mathcal{I}} \int_0^t f_i\Big(\lambda_i(s)\big(1-u^i\big)\Big)\dif s\Biggr), \ \ \ t\ge0.
\end{equation} 
\end{theorem}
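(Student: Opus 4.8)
The plan is to mirror the proof of Theorem \ref{teoremaDefinzioniEquivalentiSkellamGeneralizzato}: first make the operator $f_i\bigl(\lambda_i(t)(I-B^i)\bigr)$ explicit, then read off the forward (difference-differential) equation associated with the infinitesimal behavior (\ref{definizioneInfinitesimaleSkellamGeneralizzatoNonOmogeneoBernstein}), identify it with (\ref{equazioneDifferenzeDifferenzialeSkellamGeneralizzatoOperatoriBernstein}), and finally pass to the probability generating function and integrate. The computational core is the expansion of the operator exponential via the L\'evy representation $f_i(x)=\int_0^\infty(1-e^{-xw})\nu_i(\dif w)$ (recall $a=b=0$). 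Since $I$ and $B^i$ commute, $e^{-\lambda_i(t)w(I-B^i)}=e^{-\lambda_i(t)w}\sum_{m\ge0}\frac{(\lambda_i(t)w)^m}{m!}B^{im}$, and $B^{im}p_n(t)=p_{n-im}(t)$ yields
\begin{equation*}
f_i\bigl(\lambda_i(t)(I-B^i)\bigr)p_n(t)=f_i\bigl(\lambda_i(t)\bigr)p_n(t)-\sum_{m=1}^\infty\frac{\lambda_i(t)^m}{m!}\Bigl(\int_0^\infty e^{-\lambda_i(t)w}w^m\nu_i(\dif w)\Bigr)p_{n-im}(t).
\end{equation*}

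First I would note that the coefficient of $p_{n-im}(t)$ above is exactly the infinitesimal rate of a jump of size $im$ in (\ref{definizioneInfinitesimaleSkellamGeneralizzatoNonOmogeneoBernstein}), while $\sum_{i\in\mathcal{I}}f_i(\lambda_i(t))$ is the total rate of leaving the occupied state; hence the forward Kolmogorov equation for a process with infinitesimal behavior (\ref{definizioneInfinitesimaleSkellamGeneralizzatoNonOmogeneoBernstein}) is precisely (\ref{equazioneDifferenzeDifferenzialeSkellamGeneralizzatoOperatoriBernstein}). Because these rates depend only on the elapsed time $t$ and on the jump amplitude, and not on the state $k$, the process is additive and therefore has independent increments; existence of such a process is guaranteed once (\ref{funzioneGeneratriceProbabilitaSkellamFrazionarioBernstein}) is checked to be a bona fide generating function of the additive/L\'evy-type kind.

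Next I would multiply the forward equation by $u^n$ and sum over $n\in\mathcal{S}$, using $\sum_{n}u^n p_{n-im}(t)=u^{im}G_t(u)$ as in Theorem \ref{teoremaDefinzioniEquivalentiSkellamGeneralizzato}. Resumming the $m$-series under the integral through $\sum_{m\ge1}\frac{(\lambda_i(t)u^iw)^m}{m!}=e^{\lambda_i(t)u^iw}-1$, and combining it with $f_i(\lambda_i(t))=\int_0^\infty(1-e^{-\lambda_i(t)w})\nu_i(\dif w)$, collapses the bracket for each $i$ to $-f_i\bigl(\lambda_i(t)(1-u^i)\bigr)$, so that
\begin{equation*}
\frac{\partial}{\partial t}G_t(u)=-G_t(u)\sum_{i\in\mathcal{I}}f_i\bigl(\lambda_i(t)(1-u^i)\bigr),\qquad G_0(u)=1,
\end{equation*}
whose integration gives (\ref{funzioneGeneratriceProbabilitaSkellamFrazionarioBernstein}). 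Since this linear equation in $t$ has a unique solution, the law of $S_f$ is uniquely determined and coincides with the solution of (\ref{equazioneDifferenzeDifferenzialeSkellamGeneralizzatoOperatoriBernstein}), completing the identification.

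The hard part will be the analytic justifications concealed in these formal steps: interchanging the sum over $m$, the integral against $\nu_i$, and the sum over $n$, and ensuring that all series converge for $u$ in a neighborhood of $0$ (as the statement is careful to require). The exponential damping $e^{-\lambda_i(t)w}$ together with the L\'evy integrability $\int_0^\infty(w\wedge1)\nu_i(\dif w)<\infty$ is what controls the quantities $\int_0^\infty e^{-\lambda_i(t)w}w^m\nu_i(\dif w)$ and legitimizes the rearrangements; I would also verify that the jump rates in (\ref{definizioneInfinitesimaleSkellamGeneralizzatoNonOmogeneoBernstein}) sum to a finite total intensity, namely $\sum_{i\in\mathcal{I}}f_i(\lambda_i(t))$, so that the infinitesimal description genuinely defines a process. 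Everything else is the routine generating-function bookkeeping already carried out in Theorem \ref{teoremaDefinzioniEquivalentiSkellamGeneralizzato}.
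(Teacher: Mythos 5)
Your proposal is correct and follows essentially the same route as the paper's proof: expanding $f_i\bigl(\lambda_i(t)(I-B^i)\bigr)$ through the L\'{e}vy representation and the operator exponential, matching the resulting coefficients of $p_{n-im}(t)$ with the jump rates in (\ref{definizioneInfinitesimaleSkellamGeneralizzatoNonOmogeneoBernstein}) (including the resummation identifying the total rate $\sum_{i\in\mathcal{I}}f_i(\lambda_i(t))$), and then passing to the probability generating function and solving the resulting linear ODE. The only difference is organizational — the paper goes from the infinitesimal description to the operator form while you expand the operator first and then recognize the forward equation — which is the same computation read in the opposite direction.
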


Note that in \eqref{definizioneInfinitesimaleSkellamGeneralizzatoNonOmogeneoBernstein}, $k\mathcal{I} = \{ki\,:\, i\in\mathcal{I}\}$.
\\We denote the non-homogeneous generalized Bernstein-fractional Skellam process with $S_f\sim NHGBFSP\Bigl((f_i,\lambda_i),\ i\in\mathcal{I}\Bigr)$. We omit the letter "$N$" when we refer to the homogeneous case, that is when the rate functions $\lambda_i$ are all constants.

\begin{proof}
We begin by proving that the probability law of $S_f$, $p^f_n(t) = P\{S_f(t) = n\}$ satisfies equation (\ref{equazioneDifferenzeDifferenzialeSkellamGeneralizzatoOperatoriBernstein}).
First, we observe that for $a_{m,i}$ arbitrary positive numbers,
$$ \sum_{n\in\cup_{k\ge1}k\mathcal{I}}\  \sum_{\substack{m\in\mathbb{N}, i\in\mathcal{I}\\im = n}} a_{m, i}= \sum_{ i\in\mathcal{I}}\  \sum_{n\in\cup_{k\ge1}k\mathcal{I}} \ \sum_{\substack{m\in\mathbb{N}\\im = n}} a_{m, i} =  \sum_{ i\in\mathcal{I}} \sum_{k=1}^\infty  a_{k,i},$$
and therefore we obtain
\begin{align*}
\sum_{n\in\cup_{k\ge1}k\mathcal{I}}\, \sum_{\substack{m\in\mathbb{N}, i\in\mathcal{I}\\mi = n}}  \frac{\lambda_i(t)^m}{m!} \int_0^\infty e^{-\lambda(t) w} w^m\nu_i(\dif w)&= \sum_{i\in\mathcal{I}} \sum_{k=1}^\infty \frac{\lambda_i(t)^k}{k!} \int_0^\infty e^{-\lambda_i(t) w} w^k\nu_i(\dif w)\\
& = \sum_{i\in\mathcal{I}} \int_0^\infty \Big(e^{\lambda_i(t)w}-1\Big) e^{-\lambda_i(t) w} \nu_i(\dif w) \\
& =\sum_{i\in\mathcal{I}} f_i\Big(\lambda_i(t)\Big).
\end{align*}
This clarifies the expression for $n=0$ in \eqref{definizioneInfinitesimaleSkellamGeneralizzatoNonOmogeneoBernstein} and  implies that
\begin{equation}\label{SkellamBernsteinFormulaAiuto}
\sum_{k\in\cup_{h\ge1}h\mathcal{I}}\  \sum_{\substack{m\in\mathbb{N}, i\in\mathcal{I}\\im = k}} p_{n-im}(t) a_{m, i}=  \sum_{m=1}^\infty \sum_{ i\in\mathcal{I}} p_{n-im}(t) a_{m,i}.
\end{equation}

Now, keeping in mind (\ref{SkellamBernsteinFormulaAiuto}), from \eqref{definizioneInfinitesimaleSkellamGeneralizzatoNonOmogeneoBernstein}, by means of usual arguments we derive that $p_n^f(t)$ satisfies, for $t\ge0$ and $n\in\mathcal{S}$, the following (first) equality
\begin{align}
 \frac{\partial }{\partial t} p_n^f(t)&= -\sum_{i\in\mathcal{I}} f_i\Big(\lambda_i(t)\Big)p_n(t) + \sum_{i\in\mathcal{I}} \sum_{m=1}^\infty \frac{\lambda_i(t)^m}{m!} p_{n-im}(t) \int_0^\infty w^m e^{-\lambda_i(t)w}\nu_i(\dif w) \label{equazioneDefinizioneInfinitesimaSkellamFrazionarioBernstein}\\
& =  -\sum_{i\in\mathcal{I}} \int_0^\infty \Biggl[ p_n(t) -e^{-\lambda_i(t)w}\biggl( p_n(t) + \sum_{m=1}^\infty \frac{w^m\lambda_i(t)^m}{m!} p_{n-im}(t)\biggr) \Biggr]\nu_i(\dif w) \nonumber\\
& = -\sum_{i\in\mathcal{I}} \int_0^\infty \Biggl[ p_n(t) - e^{-\lambda_i(t)w} \sum_{m=0}^\infty  \frac{w^m\lambda_i(t)^m}{m!} B^{im} p_{n}(t)\Bigg] \nu_i(\dif w)\nonumber\\
& = -\sum_{i\in\mathcal{I}} \int_0^\infty \Biggl[ p_n(t) - e^{-\lambda_i(t)w\big(I-B^i\big)} p_{n}(t)\Bigg] \nu_i(\dif w)\nonumber\\
& = -\sum_{i\in\mathcal{I}} f_i\Bigl( \lambda_i(t)\big(I-B^i\big) \Bigr)p_n(t),\nonumber
\end{align}
which coincides with \eqref{equazioneDifferenzeDifferenzialeSkellamGeneralizzatoOperatoriBernstein}.

Now, from the equation (\ref{equazioneDefinizioneInfinitesimaSkellamFrazionarioBernstein}) we can obtain the generating function (\ref{funzioneGeneratriceProbabilitaSkellamFrazionarioBernstein}), by proceeding as shown in the proof of Theorem \ref{teoremaDefinzioniEquivalentiSkellamGeneralizzato}. Let $G^f_t(u) = \mathbb{E}u^{S_f(t)}$ with $u$ in a neighborhood of $0$, then equation (\ref{equazioneDefinizioneInfinitesimaSkellamFrazionarioBernstein}) turns into
\begin{align}
 \frac{\partial }{\partial t} G^f_t(u) &= \sum_{n=-\infty}^\infty u^n \frac{\partial }{\partial t} p_n^f(t)\nonumber\\
&= -\sum_{i\in\mathcal{I}}  f_i\Big(\lambda_i(t)\Big) G^f_t(u)+ \sum_{i\in\mathcal{I}} \sum_{m=1}^\infty \frac{\lambda_i(t)^m}{m!}u^{im} \int_0^\infty w^m e^{-\lambda_i(t)w}\nu_i(\dif w)G^f_t(u)\nonumber\\
&= -G^f_t(u)\sum_{i\in\mathcal{I}}\Bigg[ \int_0^\infty\Big(1-e^{-\lambda_i(t)w}\Big)\nu_i(\dif w) +  \int_0^\infty \Bigl(e^{\lambda_i(t)wu^i}-1\Big) e^{-\lambda_i(t)w}\nu_i(\dif w)\Bigg] \nonumber \\
& = -\sum_{i\in\mathcal{I}} f_i\Bigl( \lambda_i(t)\big(1-u^i\big) \Bigr)G^f_t(u),\nonumber
\end{align}
which, in light of the initial condition in (\ref{equazioneDifferenzeDifferenzialeSkellamGeneralizzatoOperatoriBernstein}) yields (\ref{funzioneGeneratriceProbabilitaSkellamFrazionarioBernstein}).
\end{proof}

\begin{example}
If $\mathcal{I} = \{1\}$, then $S_f$ reduces to the counting process discussed in \cite{OT2015}.
Let $K\in\mathbb{N}$. If $\mathcal{I} = \{1,\dots,K\}$ we have a fractional version of the Poisson process of order $K$, see \cite{BS2024, DcMM2016, SMU2020}. If $\mathcal{I} = \{-K,\dots, -1,1,\dots,K\}$ we have a fractional version of the Skellam process of order $K$, see \cite{GKL2020, KK2024}.
\hfill$\diamond$
\end{example}

By using the arguments in Remark \ref{remarkIncrementiSkellamGeneralizzato}, we obtain that the increments of $S_f$ have the following probability generating function, for $0\le s\le t$,
$$\mathbb{E}u^{S_f(s+t)-S_f(s)} = \text{exp}\Biggl(-\sum_{i\in \mathcal{I}} \int_0^t f_i\Big(\lambda_i(s+w)\big(1-u^i\big)\Big)\dif w\Biggr).$$
Hence, the increments (which are independent) behave as a fractional Skellam process themselves, $\big\{S_f(s+t)-S_f(s)\big\}_{t\ge0}\sim NHGBFSP\Bigl(\big(f_i,\lambda_i(s+\cdot)\big),\ i\in\mathcal{I}\Bigr)$.
If and only if the rate functions $\lambda_i$ are constant, the increments are stationary as well.

\begin{remark}
We point out that also in this fractional case we have some result of the type of Proposition \ref{proposizioneCombinazioneLineareProcessiSkellamGenerlizzati}. In particular point ($i$) holds true, meaning that for $a\in\mathbb{Z}$,  $aS_f \sim NHGBFSP\Biggl( a\mathcal{I}, \Big( \big( f_{i/a},\lambda_{i/a}\big),\ i\in a\mathcal{I}\Big) \Biggr)$.
\\
Concerning the summation we can state the following. Let $S^{(j)}_f\sim NHGBFSP\Bigl(\big( f_{i,j},\lambda_i\big),\ i\in\mathcal{I}\Bigr),$ with $j=1,\dots,J\in\mathbb{N}$, and $f_{i,j}$ being a Bernstein function for each $i,j$. Then,
\begin{equation}\label{sommaSkellamFrazionari}
 \sum_{j=1}^J S^{(j)}_f(t)   \sim NHGBFSP\Biggl(\bigg( \sum_{j=1}^J f_{i,j},\lambda_i\bigg),\ i\in\mathcal{I}\Biggr),\ \ \ t\ge0.
\end{equation}
Formula (\ref{sommaSkellamFrazionari}) is well posed because the sum of Bernstein functions is still Bernstein. To prove \eqref{sommaSkellamFrazionari} the interested reader can use the probability generating function \eqref{funzioneGeneratriceProbabilitaSkellamFrazionarioBernstein}.
\hfill$\diamond$
\end{remark}

\begin{remark}[First passage times]
If we assume $\mathcal{I}\subset\mathbb{N}$, the fractional process is non-decreasing and the results presented in Section \ref{tempiPrimoPassaggioSkellamGeneralizzato} can be extended. Indeed, by means of the same arguments one can obtain that, denoting with $T_n = \inf\{t\ge0\,:\,S(t)\ge n\}$ the first passage time through the level $n\in\mathbb{N}$, for $|u|\le 1$,
$$  \sum_{n=1}^\infty u^n P\{T_n > t \}= \frac{u}{1-u} \text{exp}\Biggl(-\sum_{i\in\mathcal{I}} \int_0^t f_i\Big( \lambda_i(s) \big(1-u^i\big)\Big) \dif s \Biggr). $$
Furthermore, by denoting with $q_n(t) = P\{T_n > t\}$, one can obtain that
\begin{equation*}
\frac{\dif }{\dif t}q_n(t) = -\sum_{i\in\mathcal{I}} f_i\Bigl( \lambda_i(t) \big(I-B^i\big)\Big) q_n(t) ,\ \ t\ge0,\ n\in\mathbb{N}, 
\end{equation*}
with $q_n(0) = P\{T_n>0\} = 1, \ n\ge1$, and $q_n(t)=0,\ t\ge0,\ n\le0$.
Finally, also formula (\ref{funzioneGeneratriceMomentiTempoPrimoPassaggio}) still holds and if the rate functions are constant one obtains
$$ \sum_{n=1}^\infty u^n \mathbb{E} T^r_n  = \frac{u\,\Gamma(r+1)}{1-u}\Biggl(\sum_{i\in\mathcal{I}}f_i\Bigl( \lambda_i\big(1-u^i\big)\Big)\Biggr)^{-r},\ \ \ r>0.$$
\hfill$\diamond$
\end{remark}

\subsection{Homogeneous case}

The homogeneous case is particularly interesting since the Bernstein-fractional process can be represented as the linear combination of time-changed Poisson processes.

\begin{proposition}\label{proposizioneScomposizioneSkellamBernsteinFrazionarioInProcessiPoisson}
Let $S_f\sim HGBFSP\Bigl((f_i,\lambda_i),\ i\in\mathcal{I}\Bigr)$, then
\begin{equation}\label{SkellamFrazionarioBernsteinGeneralizzatoInTerminiDiPoisson}
S_f(t)=\sum_{i\in\mathcal{I}} i N_i\big(H_{f_i}(t)\big),\ \ \ t\ge0,
\end{equation}
where, for $ i\in\mathcal{I}$, $N_i$ are independent Poisson processes with rate functions $\lambda_i$ and $H_{f_i}$ are independent subordinators with L\'{e}vy symbol $f_i$.
\end{proposition}

\begin{proof}
To prove (\ref{SkellamFrazionarioBernsteinGeneralizzatoInTerminiDiPoisson}) it is sufficient to show the following relationship, for $t\ge0$ and $u$ in a neighborhood of $0$,
\begin{align*}
\mathbb{E}u^{\sum_{i\in\mathcal{I}} i N_i\big(H_{f_i}(t)\big)}& = \prod_{i\in\mathcal{I}}\mathbb{E}\Biggl[  \mathbb{E}\Big[u^{ i N_i\big(H_{f_i}(t)\big)}\Big| H_{f_i},i\in\mathcal{I}  \Big]\Bigg]\\
& = \prod_{i\in\mathcal{I}}\mathbb{E}\Biggl[  e^{-\lambda_i H_{f_i}(t)\big(1-u^i\big)}  \Bigg]\\
& = \prod_{i\in\mathcal{I}} e^{-tf_i\bigl(\lambda_i(1-u^i)\bigr)}
\end{align*}
which coincides with (\ref{funzioneGeneratriceProbabilitaSkellamFrazionarioBernstein}) when $\lambda_i$ are constants.
\end{proof}

%We recall that  $H_{f_1}+ H_{f_2} \stackrel{d}{=}H_{f_1+f_2}$, with $f_1+f_2$ still being a Bernstein function. Therefore, by combining Proposition \ref{proposizioneCombinazioneLineareProcessiSkellamGenerlizzati} and Proposition \ref{proposizioneScomposizioneSkellamBernsteinFrazionarioInProcessiPoisson} one obtain the following. Let $S^{(j)}_f\sim HGBFSP\Biggl(\Big( f_{i,j},\lambda_i\Big),\ i\in\mathcal{I}\Biggr),$ with $j=1,\dots,J\in\mathbb{N}$, and $f_{i,j}$ being a Bernstein function for each $i,j$. Then, for $t\ge0$,
%$$\sum_{j=1}^J S^{(j)}_f(t) =  \sum_{i\in\mathcal{I}} i \sum_{j=1}^J  N^{(j)}_i\big(H_{f_{i,j}}(t)\big) = \sum_{i\in\mathcal{I}} i N_i\Biggl(\sum_{j=1}^J H_{f_{i,j}}(t) \Biggr)  \sim HGBFSP\Biggl(\bigg( \sum_{j=1}^J f_{i,j},\lambda_i\bigg),\ i\in\mathcal{I}\Biggr). $$
%The interested reader may also consider the case in which $\mathcal{I}$ depends on $j$.
%\\

Note that, in the case of non-homogeneous rate functions, the probability generating function of the right--hand of (\ref{SkellamFrazionarioBernsteinGeneralizzatoInTerminiDiPoisson}) reads
$$\mathbb{E}u^{\sum_{i\in\mathcal{I}} i N_i\big(H_{f_i}(t)\big)} =\prod_{i\in\mathcal{I}}\mathbb{E}\Biggl[  e^{-\big(1-u^i\big)\int_0^{H_{f_i}(t)}\lambda_i(s) \dif s}  \Bigg]$$
and the time-changed formulation is not holding.

In light of Proposition \ref{proposizioneScomposizioneSkellamBernsteinFrazionarioInProcessiPoisson} we can obtain the moments of the homogeneous Bernstein-fraction process, with $0\le s\le t$,
\begin{align}
&\mathbb{E}S_f(t) = \sum_{i\in\mathcal{I}} i\lambda_i \mathbb{E}H_{f_i}(t),\ \ \ \text{Var}S_f(t) = \sum_{i\in\mathcal{I}} i^2 \Big(\lambda_i^2 \text{Var}H_{f_i}(t) + \lambda_i \mathbb{E}H_{f_i}(t) \Bigr) \label{momentiSkellamBernsteinOmogeneo}\\
&\text{Cov}\Bigl(S_f(s), S_f(t)\Big)  =  \sum_{i\in\mathcal{I}} i^2 \text{Var}N_i\big(H_{f_i}(s)\big) =  \text{Var}S_f(s) ,  \nonumber
\end{align}
where we used the fact that $H_{f_i}(s)\le H_{f_i}(t)\ a.s\ \forall\ i$. 
\\

The interested reader can refer to the papers \cite{BS2024, GKL2020, KK2024, OT2015} for the study of some particular cases of the Poisson or Skellam processes (of order $K$) time-changed with Bernstein subordinators. Note that equation \eqref{sommaSkellamOmogeneoConTempoModificato} can be of interesting when the class of the subordinators is closed with respect the sum (like for the gamma subordinator).
\\
%Finally, we observe that by keeping in mind that a sum of compound Poisson processes is a compound Poisson process, by means of Theorem/Proposition ?? of \cite{OT2015} one can express the homogeneous fractional generalized Skellam as the limit of compound Poisson processes. 

Inspired by Theorem 2.2 of \cite{OT2015} we express the homogeneous fractional generalized Skellam in terms of (the limit of) a compound Poisson process. 

First, we recall the following useful lemma which states that the class of the compound Poisson processes is closed with respect to finite linear combinations.

\begin{lemma}\label{lemmaSommaProcessioPoissonComposto}
Let $ \mathcal{I} = \{1,\dots,n\}$ with $n\in\mathbb{N}$, $\{a_i\}_{i\in\mathcal{I}}$ be a collection of real numbers and $Z_i$ be a compound Poisson process such that  $Z_i(t) = \sum_{k=1}^{N_i(t)}  X^{(i)}_k,\ t\ge0$, where $N_i$ is an independent Poisson process of rate $\lambda_i>0$ and $X_1^{(i)},X_2^{(i)},\dots$ are i.i.d. random variables, for $\forall\ i\in\mathcal{I}$. Then $\sum_{i\in\mathcal{I}}a_i Z_i$ is a compound Poisson process such that
\begin{equation}\label{combinazioneLineareProcessiPoissonComposto}
\sum_{i\in\mathcal{I}}a_i Z_i (t)\stackrel{d}{=} \sum_{k=1}^{N_\mathcal{I}(t)} X_k^\mathcal{I} ,
\end{equation}
where $N_\mathcal{I} = \sum_{i\in\mathcal{I}}N_i$ and $X_k^\mathcal{I} = \sum_{i\in\mathcal{I}} a_i X_k^{(i)} \mathds{1}(B_k^{(i)} = 1)$, with $B_k\sim Multinomial\bigl(\lambda_i/\sum_{i\in\mathcal{I}}\lambda_i\bigr)$ i.i.d. $\forall\ k$.
\end{lemma}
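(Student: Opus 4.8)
The plan is to identify both sides of \eqref{combinazioneLineareProcessiPoissonComposto} through their (moment) generating functions, exploiting the fact that a compound Poisson process is completely determined by its rate and its jump law. Since each $Z_i$ is a L\'evy process and the $Z_i$ are independent, the combination $\sum_{i\in\mathcal{I}}a_iZ_i$ is again a L\'evy process; hence it suffices to match the generating function at an arbitrary fixed $t\ge0$ (for suitable $\gamma\in\mathbb{R}$, i.e. where convergence holds, or with the characteristic function $\mathbb{E}e^{\mathrm{i}\gamma\,\cdot}$ if one prefers to dispense with integrability assumptions on the jumps).

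First I would treat the left-hand side. Recalling the compound-Poisson generating function $\mathbb{E}e^{\gamma Z_i(t)}=\exp\bigl(\lambda_i t(\mathbb{E}e^{\gamma X^{(i)}}-1)\bigr)$, multiplying the jumps by $a_i$ replaces $X^{(i)}$ with $a_iX^{(i)}$, and the independence of the $Z_i$ turns the generating function of the sum into a product:
\begin{equation*}
\mathbb{E}\exp\Bigl(\gamma\sum_{i\in\mathcal{I}}a_iZ_i(t)\Bigr)=\prod_{i\in\mathcal{I}}\exp\Bigl(\lambda_i t\bigl(\mathbb{E}e^{\gamma a_iX^{(i)}}-1\bigr)\Bigr)=\exp\Bigl(t\sum_{i\in\mathcal{I}}\lambda_i\bigl(\mathbb{E}e^{\gamma a_iX^{(i)}}-1\bigr)\Bigr).
\end{equation*}

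Next I would compute the right-hand side. By construction $N_\mathcal{I}=\sum_{i\in\mathcal{I}}N_i$ is a Poisson process of rate $\Lambda:=\sum_{i\in\mathcal{I}}\lambda_i$, while the selection vector $B_k$ (a size-one multinomial with weights $\lambda_i/\Lambda$, independent of the jumps) makes each $X_k^\mathcal{I}$ a mixture equal to $a_iX^{(i)}$ with probability $\lambda_i/\Lambda$; hence $\mathbb{E}e^{\gamma X^\mathcal{I}}=\sum_{i\in\mathcal{I}}(\lambda_i/\Lambda)\,\mathbb{E}e^{\gamma a_iX^{(i)}}$. The generating function of the associated compound Poisson process is then
\begin{equation*}
\exp\Bigl(\Lambda t\bigl(\mathbb{E}e^{\gamma X^\mathcal{I}}-1\bigr)\Bigr)=\exp\Bigl(t\Bigl(\sum_{i\in\mathcal{I}}\lambda_i\,\mathbb{E}e^{\gamma a_iX^{(i)}}-\Lambda\Bigr)\Bigr),
\end{equation*}
which coincides with the left-hand side because $\Lambda=\sum_{i\in\mathcal{I}}\lambda_i$. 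Equality of the generating functions for all admissible $\gamma$ (and all $t\ge0$) identifies $\sum_{i\in\mathcal{I}}a_iZ_i$ as the compound Poisson process with rate $\Lambda$ and jumps $X^\mathcal{I}$, which is \eqref{combinazioneLineareProcessiPoissonComposto}.

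I do not expect a genuine obstacle: once the ingredients are assembled the argument is essentially a one-line cumulant computation. The only point demanding care is the mixture step, namely verifying that the independent multinomial selection reproduces exactly the weighted jump distribution $\sum_{i\in\mathcal{I}}(\lambda_i/\Lambda)\,\mathrm{Law}(a_iX^{(i)})$ and that $B_k$ is independent of the $X_k^{(i)}$. Equivalently, the whole statement can be read off the superposition (colouring) theorem for independent Poisson processes on the mark space, which yields the same conclusion at the level of sample paths; I would note this as an alternative but carry out the generating-function computation as the main line.
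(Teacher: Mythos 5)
Your proposal is correct and follows essentially the same route as the paper's proof in Appendix \ref{appendiceDimostrazioneLemmaSommaPoissonComposti}: condition on the multinomial selector to identify the law of $X_k^\mathcal{I}$ as the mixture $\sum_{i}(\lambda_i/\Lambda)\,\mathrm{Law}(a_iX^{(i)})$, then match the compound Poisson generating function of rate $\Lambda=\sum_i\lambda_i$ against the product of the generating functions of the independent $a_iZ_i$ (the paper writes this with $\mathbb{E}u^{(\cdot)}$ rather than $\mathbb{E}e^{\gamma(\cdot)}$, an immaterial difference). Your additional remarks --- that the L\'evy property reduces the claim to a fixed-$t$ identity, and the alternative via the Poisson superposition/colouring theorem --- are sound but not needed beyond what the paper does.
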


Note that in (\ref{combinazioneLineareProcessiPoissonComposto}) the jumps $X_k^{\mathcal{I}}$ are mixtures of the original ones with weights given by the rates of the Poisson processes.

For the sake of completeness, the interested reader can find the proof of Lemma \ref{lemmaSommaProcessioPoissonComposto} in Appendix \ref{appendiceDimostrazioneLemmaSommaPoissonComposti}.

\begin{proposition}
Let $u_i(n) = \int_0^\infty P\{N_i(t)\ge n\}\nu_i(\dif t), \ n\in\mathbb{N}, \ i \in\mathcal{I}$ with $N_i$ independent homogeneous Poisson processes with rate $\lambda_i>0$ and 
\begin{equation}\label{PoissonCompostoPerLimiteASkellamFrazionarioBernstein}
S_n(t) = \sum_{k=1}^{N\big(t\sum_{i\in\mathcal{I} }u_i(n)\big)} X_{k,n},
\end{equation}
where $N$  is a Poisson process of rate $1$ and for $k,n\in\mathbb{N}, \ X_{k,n}$ is the following mixture
\begin {equation*}
X_{n,k} = \sum_{i\in\mathcal{I}} iX_{k,n}^{(i)}\mathds{1}\big(B_{k,n}^{(i)} = 1\big)\ \text{ with }\ P\big\{X_{k,n}^{(i)} = m\big\} = \frac{1}{u_i(n)}\int_0^\infty P\{N_i(t)=m\}\nu_i(\dif t),
\end{equation*}
for $m\in\mathbb{N}$ and $B_{k,n}\sim Multinomial\big(u_i(n)/\sum_{i\in\mathcal{I}}u_i(n)\big)$. Then, if $S_f\sim HGBFSP (\lambda_i, i\in \mathcal{I} )$, 
$$ S_n(t) \stackrel{d}{\substack{\xrightarrow{\hspace*{1.4cm}}\\ n\longrightarrow0} }S_f(t),\ \ \ t\ge0.$$
In addition, if $\int_0^\infty \nu_i(\dif w)<\infty \ \forall\ i$, then $S_0 (t) = S_f(t),\ t\ge0$.
\end{proposition}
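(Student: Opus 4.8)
The plan is to represent $S_n$ as a finite linear combination of independent compound Poisson processes, compute its probability generating function in closed form, and match the resulting exponent with that of $S_f$ in \eqref{funzioneGeneratriceProbabilitaSkellamFrazionarioBernstein}. First I would check that for $n\ge1$ each rate $u_i(n)=\int_0^\infty P\{N_i(w)\ge n\}\,\nu_i(\dif w)$ is finite: near $w=0$ one has $P\{N_i(w)\ge n\}=O(w^n)=O(w)$, which is $\nu_i$-integrable since $\int_0^1 w\,\nu_i(\dif w)<\infty$, while on $[1,\infty)$ the integrand is bounded by $\nu_i([1,\infty))<\infty$. Hence $\sum_{i\in\mathcal I}u_i(n)<\infty$, and since the stated multinomial mixture is precisely the jump law produced in Lemma \ref{lemmaSommaProcessioPoissonComposto}, we have $S_n\stackrel{d}{=}\sum_{i\in\mathcal I}i\,Z_i^{(n)}$, where the $Z_i^{(n)}$ are independent compound Poisson processes of rate $u_i(n)$ and jump law $X_{k,n}^{(i)}$.

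By independence the generating function factorises over $i$. In each factor the prefactor $u_i(n)$ cancels the normalisation of the jump law and, through the constant term, combines with the tail mass $u_i(n)=\int_0^\infty\sum_{m\ge n}P\{N_i(w)=m\}\,\nu_i(\dif w)$, so that
\[
\mathbb E u^{S_n(t)}=\exp\!\Bigg(t\sum_{i\in\mathcal I}\int_0^\infty\sum_{m\ge n}\big(u^{im}-1\big)\,P\{N_i(w)=m\}\,\nu_i(\dif w)\Bigg).
\]
The key observation is that the complete series reproduces the target: since the $m=0$ term vanishes and $\sum_{m\ge0}u^{im}P\{N_i(w)=m\}=\mathbb E u^{iN_i(w)}=e^{-\lambda_i w(1-u^i)}$, the Bernstein representation $f_i(x)=\int_0^\infty(1-e^{-xw})\,\nu_i(\dif w)$ gives
\[
t\sum_{i\in\mathcal I}\int_0^\infty\sum_{m\ge1}\big(u^{im}-1\big)P\{N_i(w)=m\}\,\nu_i(\dif w)=-t\sum_{i\in\mathcal I}f_i\big(\lambda_i(1-u^i)\big),
\]
which is exactly $\log\mathbb E u^{S_f(t)}$ in the homogeneous case of \eqref{funzioneGeneratriceProbabilitaSkellamFrazionarioBernstein}. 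The exponent of $\mathbb E u^{S_n(t)}$ thus differs from $\log\mathbb E u^{S_f(t)}$ only through the finitely many low-order contributions with $1\le m<n$, namely $t\sum_{i}\int_0^\infty\sum_{m=1}^{n-1}(u^{im}-1)P\{N_i(w)=m\}\,\nu_i(\dif w)$; each such integral is finite by the estimates above, this discrepancy is empty at the smallest admissible index and negligible in the relevant regime by dominated convergence against $\nu_i$, whence $S_n(t)\stackrel{d}{\longrightarrow}S_f(t)$ follows from the continuity theorem for generating functions.

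For the additional claim, when $\int_0^\infty\nu_i(\dif w)<\infty$ the value $n=0$ is admissible because $u_i(0)=\nu_i([0,\infty))<\infty$, so $S_0$ is itself a bona fide compound Poisson process. The computation above applies verbatim with the summation starting at $m=0$, and since the $m=0$ term contributes nothing the exponent equals $-t\sum_i f_i(\lambda_i(1-u^i))$ identically; hence $S_0(t)\stackrel{d}{=}S_f(t)$ exactly, with no passage to the limit, which is the announced exact representation.

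I expect the main obstacle to be twofold: justifying the interchange of the series $\sum_m$ with integration against the possibly infinite measure $\nu_i$, and the domain of $u$. Indeed, for $i<0$ and real $u$ near $0$ the argument $\lambda_i(1-u^i)$ leaves $[0,\infty)$, where $f_i$ is a priori undefined. I would circumvent both issues by working with characteristic functions, taking $u=e^{\mathrm i\theta}$ (with $\mathrm i=\sqrt{-1}$, distinct from the index $i$): then $\Re\big(\lambda_i(1-u^i)\big)=\lambda_i\big(1-\cos(i\theta)\big)\ge0$, so the L\'evy--Khintchine integral $\int_0^\infty(1-e^{-zw})\,\nu_i(\dif w)$ converges for every such argument, the dominated-convergence estimates hold uniformly in a neighbourhood of $\theta=0$, and the continuity theorem applies directly.
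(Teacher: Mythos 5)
Your proof is correct, and it takes a genuinely different route for the central step. The paper's own proof is structurally: (a) cite Theorem 2.2 of \cite{OT2015} to get, for each $i$, the convergence of the component compound Poisson $Z_n^{(i)}(t)=\sum_{k=1}^{N_i(tu_i(n))}X_{k,n}^{(i)}$ to $N_i\big(H_{f_i}(t)\big)$ as $n\to0$; (b) invoke the time-changed representation \eqref{SkellamFrazionarioBernsteinGeneralizzatoInTerminiDiPoisson} of Proposition \ref{proposizioneScomposizioneSkellamBernsteinFrazionarioInProcessiPoisson} to identify the limit of $\sum_i iZ_n^{(i)}$ with $S_f$; (c) use Lemma \ref{lemmaSommaProcessioPoissonComposto} to recognize $\sum_i iZ_n^{(i)}$ as the compound Poisson $S_n$ of \eqref{PoissonCompostoPerLimiteASkellamFrazionarioBernstein}. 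You share step (c) (applied in the reverse direction), but you replace (a) and (b) by a self-contained probability generating function computation: the exponent of $\mathbb{E}u^{S_n(t)}$ equals $-t\sum_i f_i\big(\lambda_i(1-u^i)\big)$ plus the explicit discrepancy $t\sum_i\int_0^\infty\sum_{m=1}^{n-1}(u^{im}-1)P\{N_i(w)=m\}\,\nu_i(\dif w)$, via $\sum_{m\ge0}u^{im}P\{N_i(w)=m\}=e^{-\lambda_i w(1-u^i)}$ and the Bernstein representation of $f_i$. This buys three things the paper's proof does not make visible: first, the discrepancy is \emph{exactly zero} at $n=1$, so the representation is an identity in distribution already at the smallest admissible index (the limit "$n\to0$" with $n\in\mathbb{N}$ is degenerate, and your computation is the honest way to see what the statement actually asserts); second, the "in addition" claim for finite $\nu_i$ falls out of the same formula since the $m=0$ term contributes nothing; third, you correctly flag that for $i<0$ the argument $\lambda_i(1-u^i)$ of $f_i$ leaves $[0,\infty)$ for real $u$ near $0$ — a genuine domain gap in the generating-function formulation that the paper glosses over — and your repair via characteristic functions $u=e^{\mathrm{i}\theta}$, where $\Re\big(\lambda_i(1-e^{\mathrm{i}\theta i})\big)\ge0$ keeps the L\'evy--Khintchine integral convergent, is the right fix. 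The only costs of your route are length and the need to justify the series/integral interchange (which you do: the integrand is dominated by $2P\{N_i(w)\ge1\}=2(1-e^{-\lambda_i w})$, which is $\nu_i$-integrable), whereas the paper's citation-based argument is shorter and ties the result to the subordination picture; also note that your reading of the jump law as supported on $m\ge n$ is an interpolation of the paper's ambiguous "$m\in\mathbb{N}$", but it is the only reading under which the stated law is a probability distribution, so it is the correct one.
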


\begin{proof}
In view of Theorem 2.2 of \cite{OT2015} we have that, for 
\begin{equation}\label{formulaPerApprosimazioneSkellamBernstein}
	Z_n^{(i)}(t) = \sum_{k=1}^{N_i\big(tu_i(n)\big)} X_{k,n}^{(i)} \stackrel{d}{\substack{\xrightarrow{\hspace*{1.4cm}}\\ n\longrightarrow0} }N_i\big(H_{f_i}(t)\big), \ t\ge0, \ i\in\mathcal{I}.
	\end{equation}
Hence, by keeping in mind the representation in terms of time-changed Poisson processes of the generalized Bernstein-fractional Skellam process, \eqref{SkellamFrazionarioBernsteinGeneralizzatoInTerminiDiPoisson}, we have that $\sum_{i\in\mathcal{I}}iZ_n^{(i)}(t)  \stackrel{d}{\rightarrow}S_f(t)$ as $n\longrightarrow0$. Finally, by means of Lemma \ref{lemmaSommaProcessioPoissonComposto}, $\sum_{i\in\mathcal{I}}iZ_n^{(i)}(t) $ reduces to the compound Poisson in (\ref{PoissonCompostoPerLimiteASkellamFrazionarioBernstein}).

We point that in the case of finite L\'{e}vy measures $\nu_i, \ i\in\mathcal{I}$, then $u_i(0)$ exists finite and therefore relationship \eqref{formulaPerApprosimazioneSkellamBernstein} modifies into $Z_n^{(i)}(t) = \sum_{k=1}^{N_i(t u_i(0))} X_{k,n}^{(i)}$ and the final equality in the statement follows by repeating the used argument.
\end{proof}

\subsubsection{Time-fractional derivative}\label{sottosezioneSkellamFrazionarioNelTempo}

We now consider the homogeneous case in which the time derivative of equation \eqref{equazioneDifferenzeDifferenzialeSkellamGeneralizzatoOperatoriBernstein} is replaced by the Caputo-Dzerbashyan fractional derivative of order $\alpha >0$.

\begin{theorem}\label{teoremaSkellamBernsteinCaputo}
Let $\alpha\in(0,1)$, $\mathcal{I}\subset \mathbb{Z}\setminus\{0\}, \ |\mathcal{I}|<\infty$, $\lambda_i >0$ and $f_i$ be a Bernstein function for $i\in\mathcal{I}$. Then, the solution to the fractional difference-differential problem 
\begin{equation}\label{equazioneDifferenzeDifferenzialeSkellamGeneralizzatoOperatoriBernsteinCaputo}
  \frac{\partial^\alpha }{\partial t^\alpha} p_n(t)= -\sum_{i\in\mathcal{I}} f_i\Bigl( \lambda_i\big(I-B^i\big) \Bigr)p_n(t), \ \ \ t \ge0,\ n\in  \bigcup_{m=1}^\infty m\mathcal{I},\ \ \ p_n(0) =\begin{cases}\begin{array}{l l}1,& n=0,\\0,&n\not=0,\end{array}\end{cases} 
\end{equation}
is the probability law of the process $S_{f,\alpha} = S_f\circ L_\alpha$ where $S_f\sim HGBFSP\Bigl((f_i,\lambda_i),\ i\in\mathcal{I}\Bigr)$ and $L_\alpha$ is an independent inverse of the subordinator of order $\alpha$.
\end{theorem}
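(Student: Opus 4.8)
The plan is to argue entirely through the probability generating function, as elsewhere in the paper, and to reduce the statement to a scalar Caputo fractional ODE solved by a Mittag--Leffler function. The first point to settle is the status of the rates. Since the asserted representation involves $S_f\sim HGBFSP\bigl((f_i,\lambda_i),\,i\in\mathcal{I}\bigr)$, the \emph{homogeneous} process, the rates are constant, $\lambda_i(t)\equiv\lambda_i$, and the operator on the right of \eqref{equazioneDifferenzeDifferenzialeSkellamGeneralizzatoOperatoriBernsteinCaputo} is autonomous. This autonomy is precisely what makes the time-change admissible: composing with an inverse stable subordinator randomises the time argument of the generator, so a genuinely $t$-dependent operator could not survive the subordination. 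I would therefore begin by recording, from Theorem \ref{teoremaDefinzioniEquivalentiSkellamGeneralizzatoBernstein} with constant rates, that
\begin{equation*}
\mathbb{E}u^{S_f(s)}=e^{-s\,\Phi(u)},\qquad \Phi(u):=\sum_{i\in\mathcal{I}} f_i\bigl(\lambda_i(1-u^i)\bigr),
\end{equation*}
for suitable $u$ (the admissible range in which the Bernstein arguments are nonnegative, exactly as in \eqref{funzioneGeneratriceProbabilitaSkellamFrazionarioBernstein}), and that $\Phi(u)$ does not depend on time.

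Next I would compute the generating function of $S_{f,\alpha}=S_f\circ L_\alpha$ by conditioning on the independent process $L_\alpha$. Using the display above at the random time $L_\alpha(t)$,
\begin{equation*}
\mathbb{E}u^{S_{f,\alpha}(t)}=\mathbb{E}\Bigl[e^{-L_\alpha(t)\,\Phi(u)}\Bigr]=E_\alpha\bigl(-\Phi(u)\,t^\alpha\bigr),\qquad E_\alpha(x):=\sum_{k=0}^\infty\frac{x^k}{\Gamma(\alpha k+1)},
\end{equation*}
where the last equality is the classical Laplace transform identity $\mathbb{E}e^{-\theta L_\alpha(t)}=E_\alpha(-\theta t^\alpha)$ for the inverse $\alpha$-stable subordinator, applied with $\theta=\Phi(u)$.

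It then remains to show that this is exactly the generating function generated by \eqref{equazioneDifferenzeDifferenzialeSkellamGeneralizzatoOperatoriBernsteinCaputo}. Multiplying the equation by $u^n$ and summing over $n\in\mathcal{S}$, I would use, as in the proof of Theorem \ref{teoremaDefinzioniEquivalentiSkellamGeneralizzato}, that $B^i$ acts as multiplication by $u^i$ on generating functions; through the L\'evy representation $f_i(x)=\int_0^\infty(1-e^{-xw})\nu_i(\dif w)$ this turns $f_i\bigl(\lambda_i(I-B^i)\bigr)$ into multiplication by $f_i\bigl(\lambda_i(1-u^i)\bigr)$, precisely the computation already performed in \eqref{equazioneDefinizioneInfinitesimaSkellamFrazionarioBernstein}. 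Hence the generating function $G_t(u)$ of any solution obeys the scalar problem $\tfrac{\partial^\alpha}{\partial t^\alpha}G_t(u)=-\Phi(u)\,G_t(u)$ with $G_0(u)=1$, whose unique solution is $G_t(u)=E_\alpha\bigl(-\Phi(u)t^\alpha\bigr)$. This matches the previous display, identifying the one-dimensional law of the solution with that of $S_{f,\alpha}(t)$, which is all the equation determines. Here homogeneity enters decisively a second time: only because $\lambda_i$ are constant is the coefficient $\Phi(u)$ time-independent, so that the fractional ODE has the Mittag--Leffler solution; for $\lambda_i(t)$ the coefficient would be $\Phi(u,t)$ and no such matching would occur.

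The main obstacle, and the part deserving care, is the operator functional-calculus step together with the interchange of $\tfrac{\partial^\alpha}{\partial t^\alpha}$ with the sum over $n$. One must justify expanding $e^{-\lambda_i(I-B^i)w}=\sum_{m\ge0}\tfrac{(-\lambda_i w)^m}{m!}(I-B^i)^m$, integrating against $\nu_i$, and summing over $n$ so that $f_i\bigl(\lambda_i(I-B^i)\bigr)$ genuinely acts as multiplication by $f_i\bigl(\lambda_i(1-u^i)\bigr)$; this is the same bookkeeping as in \eqref{equazioneDefinizioneInfinitesimaSkellamFrazionarioBernstein}, now carried out with the Caputo derivative in place of $\partial_t$, and it is legitimate by the absolute convergence afforded by $|\mathcal{I}|<\infty$ and $\int_0^\infty(w\wedge1)\nu_i(\dif w)<\infty$. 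An equivalent, more probabilistic route would instead write the law of $S_{f,\alpha}(t)$ as $\int_0^\infty p^{S_f}_n(s)\,h_\alpha(s,t)\dif s$ with $h_\alpha$ the density of $L_\alpha(t)$, apply the governing relation $\tfrac{\partial^\alpha}{\partial t^\alpha}h_\alpha(s,t)=-\tfrac{\partial}{\partial s}h_\alpha(s,t)$, integrate by parts in $s$, and pull the \emph{autonomous} generator $-\sum_i f_i(\lambda_i(I-B^i))$ out of the $s$-integral; it is exactly this pull-out that fails irreparably for non-constant $\lambda_i$, confirming that the homogeneity hypothesis is essential and resolving the apparent $\lambda_i(t)$ in \eqref{equazioneDifferenzeDifferenzialeSkellamGeneralizzatoOperatoriBernsteinCaputo} as constant rates.
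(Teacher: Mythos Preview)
Your proposal is correct and follows essentially the same route as the paper: both arguments pass to the probability generating function, reduce \eqref{equazioneDifferenzeDifferenzialeSkellamGeneralizzatoOperatoriBernsteinCaputo} to the scalar Caputo problem $\partial_t^\alpha G_t(u)=-\Phi(u)G_t(u)$ with $\Phi(u)=\sum_{i\in\mathcal{I}}f_i\bigl(\lambda_i(1-u^i)\bigr)$, solve it by the Mittag--Leffler function $E_{\alpha,1}(-\Phi(u)t^\alpha)$, and match this with $\mathbb{E}u^{S_f(L_\alpha(t))}$ via the identity $\mathbb{E}e^{-\mu L_\alpha(t)}=E_{\alpha,1}(-\mu t^\alpha)$. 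Your additional commentary on why homogeneity is essential and on the operator-calculus bookkeeping is accurate but goes beyond what the paper records.
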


Note that the above result could be easily generalized to the case of the inverse of a Bernstein subordinator by means of the convolutional-type derivative (for instance, by suitably applying Theorem 4.1 of \cite{CO2026} or the arguments for Theorem 1 in \cite{BS2024}); we refer to \cite{K2011, T2015} for some details on this operator.

\begin{proof}
Keeping in mind that the rate functions are constant, following the line of the proof of Theorem \ref{teoremaDefinzioniEquivalentiSkellamGeneralizzatoBernstein}, from (\ref{equazioneDifferenzeDifferenzialeSkellamGeneralizzatoOperatoriBernsteinCaputo}) we obtain, for $t\ge0$ and $|u|\le1$,
\begin{align}
 \frac{\partial^\alpha }{\partial t^\alpha}G_t(u) = -\sum_{i\in\mathcal{I}} f_i\Bigl( \lambda_i\big(1-u^i\big) \Bigr)G_t(u).
\end{align}
where $G_t(u) = \sum_{n=0} u^n p_n(t)$. By means of the Laplace transform one can show that the solution to (\ref{equazioneDifferenzeDifferenzialeSkellamGeneralizzatoOperatoriBernsteinCaputo}), with initial condition $G_0(u) =1$, is 
\begin{equation}\label{funzioneGeneratriceProbabilitaSkellamBernsteinCaputo}
G_t(u) = E_{\alpha, 1}\Biggl(-t^\alpha\sum_{i\in\mathcal{I}}f_i\Bigl(\lambda_i\big(1-u^i\big) \Big)\Biggr).
\end{equation}
Finally, we show that the probability generating function of $S_f\big(L_\alpha(t)\big)$ coincides with (\ref{funzioneGeneratriceProbabilitaSkellamBernsteinCaputo}) for all $t\ge0$.
\begin{align}
\mathbb{E}u^{S_f\big(L_\alpha(t)\big)}& = \mathbb{E}\Bigg[\mathbb{E}\Big[u^{S_f\big(L_\alpha(t)\big)}\Big|L_{\alpha}(t)\Big]\Bigg]  \nonumber\\
& = \mathbb{E}\ \text{exp}\Biggl(- L_{\alpha}(t)\sum_{i\in \mathcal{I}}  f_i\Big(\lambda_i\big(1-u^i\big)\Big)\Biggr)     \label{passaggioBernsteinCaputoUsoFunzioneGeneratriceBernstein}\\
& =  E_{\alpha, 1}\Biggl(-t^\alpha\sum_{i\in\mathcal{I}}f_i\Bigl(\lambda_i\big(1-u^i\big) \Big)\Biggr) \nonumber
\end{align}
where in (\ref{passaggioBernsteinCaputoUsoFunzioneGeneratriceBernstein}) we used \eqref{funzioneGeneratriceProbabilitaSkellamFrazionarioBernstein} and in the last step we used the Laplace transform of the law of $L_\alpha(t)$, i.e. $\mathbb{E}e^{-\mu L\alpha(t)} = E_{\alpha, 1}\big(-t^\alpha \mu\big), \ \mu,t\ge0$. 
\end{proof}

\begin{remark}[Pseudo-processes]
We point out that Theorem \ref{teoremaSkellamBernsteinCaputo} can be extended to the case of $\alpha>1$. This implies that $L_\alpha$ is an independent pseudo-inverse of the pseudo-subordinator of order $\alpha$ and, therefore, $S_{f,\alpha}$ is not a genuine stochastic process, but a pseudo-process, meaning that it has a real pseudo-measure. We refer to \cite{CO2025} and references therein for the details on the formalization of pseudo-processes, pseudo-subordinators and their inverses. 
\hfill$\diamond$
\end{remark}

From the composition in Theorem \ref{teoremaSkellamBernsteinCaputo}, the interested reader can obtain the moments of $S_{f,\alpha}$ by means of the formulas in (\ref{momentiSkellamBernsteinOmogeneo}).

%%%%%%%%%%%%%%%%%%%%%%%%%%

\subsection*{\large{Declarations}}

\textbf{Ethical Approval.} This declaration is not applicable.
 \\
\textbf{Competing interests.}  The authors have no competing interests to declare.
\\
\textbf{Authors' contributions.} Both authors equally contributed in the preparation and the writing of the paper.
 \\
\textbf{Funding.} The authors received no funding.
 \\
\textbf{Availability of data and materials.} This declaration is not applicable.

%\subsection*{Acknowledgments}

% ---------------------------------------------------------------------------------------------------------

%% The Appendices part is started with the command \appendix;
%% appendix sections are then done as normal sections

 \appendix

\section{Generating function related to first passage times}\label{appendiceDimostrazioneFunzioneGeneratriceTempiPrimoPassaggio}

%For the sake of completeness 
We prove formula (\ref{generatriceTempiPrimoPassaggioRipartizione}). The reader can equivalently derive (\ref{generatriceTempiPrimoPassaggio}).
For $n\in\mathbb{N},\ t\ge0$ and $|u|<1$,
\begin{align*}
\sum_{n=1}^\infty u^n P\{T_n \le t\} &= \sum_{n=1}^\infty u^n  \sum_{k=n}^\infty P\{S(t) = k\}\\
& = \sum_{k=1}^\infty  P\{S(t) = k\} \frac{u^{k+1}-u}{u-1} \\
& = \frac{u}{u-1}\Biggl[\sum_{k=1}^\infty P\{S(t) = k\} \big(u^k - 1\big) + \big(u^0-1\big)P\{S(t) = 0\}\Biggr] \\
&  = \frac{u}{u-1}\sum_{k=0}^\infty P\{S(t) = k\} (u^k-1)\\
& =\frac{u}{u-1}\Bigl(G_t(u) - 1\Bigr).
\end{align*}

\section{Sum of compound Poisson processes}\label{appendiceDimostrazioneLemmaSommaPoissonComposti}

We prove Lemma \ref{lemmaSommaProcessioPoissonComposto}. Let $t\ge0$ and $u$ in the neighborhood of $0$. By assuming that $X^{(i)}$, $X^\mathcal{I}$ and $B$ are copies of the $X_k^{(i)},\ X^\mathcal{I}_k$ and $B_k$ respectively, $\forall\ k, i\in\mathcal{I}$ we arrive at the following probability generating function,
\begin{align*}
\mathbb{E}u^{X^{\mathcal{I}}} &= \mathbb{E}\Biggl[\mathbb{E}\Bigl[ u^{\sum_{i\in\mathcal{I}} a_iX^{(i)}\mathds{1}(B^{(i)} =1)}\Big| B \Bigr]\Biggr]\\
& = \sum_{j\in\mathcal{I}} P\{B^{(j)} = 1\}\mathbb{E}\Biggl[\mathbb{E}\Bigl[ u^{\sum_{i\in\mathcal{I}} a_iX^{(i)}\mathds{1}(B^{(i)} =1)}\Big| B^{(j)} = 1 \Bigr]\Biggr] \\
& = \sum_{j\in\mathcal{I}} \frac{\lambda_j}{\sum_{i\in\mathcal{I}}\lambda_i} \mathbb{E}u^{a_jX^{(j)}}.
 \end{align*}
Thus, 
\begin{align*}
\mathbb{E}u^{\sum_{k=1}^{N_\mathcal{I}(t)} X^{\mathcal{I}}_k} & = \text{exp}\Biggl(-\sum_{i\in\mathcal{I}}\lambda_i\Bigl[ 1-\mathbb{E}u ^{X^\mathcal{I}}\Big]\Biggr)\\
& = \text{exp}\Biggl(-\sum_{i\in\mathcal{I}}\lambda_i\Biggl[ 1-\sum_{j\in\mathcal{I}} \frac{\lambda_j}{\sum_{i\in\mathcal{I}}\lambda_i} \mathbb{E}u^{a_jX^{(j)}}\Bigg]\Biggr)\\
& =  \text{exp}\Biggl(-\sum_{i\in\mathcal{I}}\lambda_i\Biggl[ 1- \mathbb{E}u^{a_jX^{(j)}}\Bigg]\Biggr)\\
& = \mathbb{E}u^{\sum_{i\in \mathcal{I}} a_iZ_i(t)}.
\end{align*}

%% \section{}
%% \label{}

%% For citations use: 
%%       \cite{<label>} ==> Jones et al. [21]
%%       \citep{<label>} ==> [21]
%%

%% If you have bibdatabase file and want bibtex to generate the
%% bibitems, please use
%%
%%  \bibliographystyle{elsarticle-num-names} 
%%  \bibliography{<your bibdatabase>}

%% else use the following coding to input the bibitems directly in the
%% TeX file.
\footnotesize{

}

\end{document}